\title[The 3D inviscid limit  problem with data analytic near the boundary]{The 3D inviscid limit problem with data analytic near the boundary} \author[F.~Wang]{Fei  Wang} \address{Department of Mathematics, University of Maryland, College Park, MD 20740} \email{fwang256@umd.edu} \usepackage{fancyhdr} \usepackage{comment} \usepackage[margin=1in]{geometry} \usepackage{amsmath, amsthm, amssymb} \usepackage{times} \usepackage{graphicx, mathtools} \usepackage[usenames,dvipsnames,svgnames,table]{xcolor} \usepackage[colorlinks=true, pdfstartview=FitV, linkcolor=blue, citecolor=blue, urlcolor=blue]{hyperref} \allowdisplaybreaks 
\begin{document} \def\XX{X} \def\YY{Y} \def\ZZZ{Z} \def\ghoahbkseibhseir{\par} \def\intint{\int\!\!\!\!\int} \def\aa{\alpha} \def\AA{a} \def\OO{\mathcal O} \def\SS{\mathbb S} \def\CC{\mathbb C} \def\RR{\mathbb R} \def\TT{\mathbb T} \def\ZZ{\mathbb Z} \def\HH{\mathbb H^3_+} \def\RSZ{\mathcal R} \def\LL{\mathcal L} \def\SL{\LL^1} \def\ZL{\LL^\infty} \def\GG{\mathcal G} \def\eps{\varepsilon} \def\tt{\langle t\rangle} \def\erf{\mathrm{Erf}} \def\blue#1{\textcolor{blue}{#1}} \def\mgt#1{\textcolor{magenta}{#1}} \def\ff{\rho} \def\gg{G}  \def\xx{\mathcal{X}} \def\tilde{\widetilde} \def\sqrtnu{\sqrt{\nu}} \def\ww{w} \def\ft#1{#1_\xi} \def\les{\lesssim} \renewcommand*{\Re}{\ensuremath{\mathrm{{\mathbb R}e\,}}} \renewcommand*{\Im}{\ensuremath{\mathrm{{\mathbb I}m\,}}} \def\llabel#1{\notag} \newcommand{\norm}[1]{\left\|#1\right\|} \def\therere#1{}\newcommand{\nnorm}[1]{\lVert #1\rVert} \newcommand{\abs}[1]{\left|#1\right|} \newcommand{\NORM}[1]{|\!|\!| #1|\!|\!|} \newtheorem{theorem}{Theorem}[section] \newtheorem{corollary}[theorem]{Corollary} \newtheorem{proposition}[theorem]{Proposition} \newtheorem{lemma}[theorem]{Lemma} \theoremstyle{definition} \newtheorem{definition}{Definition}[section] \newtheorem{remark}[theorem]{Remark} \def\theequation{\thesection.\arabic{equation}} \numberwithin{equation}{section} \def\theequation{\thesection.\arabic{equation}} \numberwithin{equation}{section} \def\ee{\epsilon_0} \def\startnewsection#1#2{\section{#1}\label{#2}\setcounter{equation}{1}}  \def\nnewpage{\newpage} \def\sgn{\mathop{\rm sgn\,}\nolimits} \def\Tr{\mathop{\rm Tr}\nolimits}     \def\div{\mathop{\rm div}\nolimits}  \def\curl{\mathop{\rm curl}\nolimits} \def\supp{\mathop{\rm supp}\nolimits}  \def\indeq{\quad{}\quad{}} \def\period{.}                           \def\semicolon{\,;} \def\nts#1{{\cor #1\cob}} \def\colr{\color{red}} \def\colb{\color{black}} \definecolor{colorgggg}{rgb}{0.1,0.5,0.3} \definecolor{colorllll}{rgb}{0.0,0.7,0.0} \definecolor{colorhhhh}{rgb}{0,0.8,0.5} \definecolor{colorpppp}{rgb}{0.3,0.0,0.7} \def\colg{\color{colorgggg}} \def\collg{\color{colorllll}} \def\cole{} \def\colu{\color{blue}} \def\colW{\colb}    \def\comma{ {\rm ,\qquad{}} } \def\commaone{ {\rm ,\quad{}} }         \def\les{\lesssim} \def\nts#1{{\color{red}\hbox{\bf ~#1~}}} \def\blackdot{{\color{red}{\hskip-.0truecm\rule[-1mm]{4mm}{4mm}\hskip.2truecm}}\hskip-.3truecm} \def\bluedot{{\color{blue}{\hskip-.0truecm\rule[-1mm]{4mm}{4mm}\hskip.2truecm}}\hskip-.3truecm}  \def\purpledot{{\color{colorpppp}{\hskip-.0truecm\rule[-1mm]{4mm}{4mm}\hskip.2truecm}}\hskip-.3truecm} \def\greendot{{\color{colorgggg}{\hskip-.0truecm\rule[-1mm]{4mm}{4mm}\hskip.2truecm}}\hskip-.3truecm} \def\fractext#1#2{{#1}/{#2}} \def\ii{\hat\imath} \def\fei#1{\textcolor{blue}{#1}} \newcommand{\red}[1]{\color{red}{#1}\color{black}} \def\nn{\nonumber\\} \def\zz{\bar z} \begin{abstract} We consider the 3D Navier-Stokes equations in the upper half space $\HH$ with periodic boundary conditions in the horizontal directions. We prove the inviscid limit holds in the topology $L^\infty([0, T]; L^2(\HH))$ assuming the initial datum is \therere{jDi u8 rtJt TKSK jlGkGw t8 n FDx jA9 fCm iu FqMW jeox 5Akw3w Sd 8 1vK 8c4 C0O dj CHIs eHUO hyqGx3 K} analytic in the region $\{(x, y, z)\in\HH: 0\le z\le 1+\mu_0\}$ for some positive $\mu_0$   and has Sobolev regularity in the complement. \end{abstract} \maketitle \setcounter{tocdepth}{2}  \startnewsection{Introduction}{sec01} We consider the inviscid limit problem of viscous incompressible fluids on the upper half space with periodic boundary conditions in the horizontal directions. More specifically, we study the 3D Navier-Stokes equations with positive kinematic viscosity $\nu$  \begin{align}  &\partial_t u - \nu\Delta u \therere{RVg 0Pl6Z8 9X z fLh GlH IYB x9 OELo 5loZ x4wag4 cn F aCE KfA 0uz fw HMUV M9Qy eARFe3 Py 6 kQG GFx r} + u\cdot\nabla u + \nabla p = 0 \label{8ThswELzXU3X7Ebd1KdZ7v1rN3GiirRXGKWK099ovBM0FDJCvkopYNQ2a:01} \\ &\div u = 0 \label{8ThswELzXU3X7Ebd1KdZ7v1rN3GiirRXGKWK099ovBM0FDJCvkopYNQ2a:02} \\ &u|_{t=0} = u_0  \label{8ThswELzXU3X7Ebd1KdZ7v1rN3GiirRXGKWK099ovBM0FDJCvkopYNQ2a:03} \end{align} in $\HH =\TT \times \TT \times \RR_+ = \{ (x,y, z) \in \TT \times \TT \times \RR \colon z \geq 0\}$, with the no-slip boundary condition  \begin{align} &u|_{z=0} = 0. \label{8ThswELzXU3X7Ebd1KdZ7v1rN3GiirRXGKWK099ovBM0FDJCvkopYNQ2a:03b} \end{align} The 3D incompressible Euler equations, which model inviscid fluids, may be obtained by setting $\nu = 0$ in \eqref{8ThswELzXU3X7Ebd1KdZ7v1rN3GiirRXGKWK099ovBM0FDJCvkopYNQ2a:01}--\eqref{8ThswELzXU3X7Ebd1KdZ7v1rN3GiirRXGKWK099ovBM0FDJCvkopYNQ2a:03}, with the slip boundary condition given by \[u_3|_{z=0}=0.\] \ghoahbkseibhseir Whether or not the solutions of the Euler equations can describe to the leading order the solutions of the Navier-Stokes  equations  in the {\em inviscid limit} $\nu \to 0$  is a fundamental problem in mathematical fluid dynamics.  When the  fluid domain has no boundary it is well known that the solutions of  the Navier- Stokes equations converge to those of the Euler equations  in $L^\infty([0, T]; L^2(\HH))$, e.g.,~\cite{Masmoudi07, Chemin96,  ConstantinWu96}. However, in the presence of boundary  the problem  becomes very challenging. The fundamental source of difficulties lies  in the mismatch of boundary conditions between the viscous  Navier-Stokes flow (no-slip, $u_1|_{z=0} = u_2|_{z=0}=u_3|_{z=0} =  0$) and the inviscid Euler flow (slip, $u_3|_{z=0} = 0$).  Another  difficulty is due to the fast \therere{w O lDq l1Y 4NY 4I vI7X DE4c FeXdFV bC F HaJ sb4 OC0 hu Mj65 J4fa vgGo7q Y5 X tLy izY DvH TR zd9x S} growth of vorticity close to the  boundary $\partial \HH$ as $\nu \to 0$. \ghoahbkseibhseir  In~\cite{Kato84b}, Kato provided a criteria: The inviscid limit  property in the topology $L^\infty_t L^2_x$ is equivalent to the  condition \begin{align} \lim_{\nu \to 0} \int_0^T \!\!\!\! \int_{\{  z\les \nu \}} \nu |\nabla u^{NS}|^2 dx dt \to 0, \notag \end{align}   where $u^{NS}$ is a Navier-Stokes solution corresponding to the  viscosity $\nu$. For more results along these lines, we refer to~\cite{BardosTiti13, ConstantinElgindiIgnatovaVicol17,ConstantinKukavicaVicol15, Kelliher08,Kelliher17,Masmoudi98,TemamWang97b,Wang01, MaekawaMazzucato16} and references therein.  In~\cite{ConstantinVicol18,DrivasNguyen18, ConstantinBrazil18}, conditional results  have been established toward the weak $L^2_t L^2_x$ inviscid limit (against test functions compactly supported in the interior of the domain). Under certain symmetry assumptions, the vanishing viscosity limit also holds, e.g.,~\cite{BonaWu02,HanMazzucatoNiuWang12,Kelliher09,LopesMazzucatoLopes08,LopesMazzucatoLopesTaylor08,MaekawaMazzucato16,Matsui94,MazzucatoTaylor08} and references therein.   \ghoahbkseibhseir When we only make regularity assumption on the initial data without any symmetry, the problem becomes more challenging and         fewer results are available. In the seminal works~\cite{SammartinoCaflisch98a,SammartinoCaflisch98b}, Sammartino-Caflisch establish the validity of the Prandtl expansion \begin{align} u^{NS}(x,y,z, t) = u^{E}(x,y,z,t) + u^{P}\left(x,y,\frac{z}{\sqrt{\nu}},t\right) + \OO(\sqrt{\nu})  \label{8ThswELzXU3X7Ebd1KdZ7v1rN3GiirRXGKWK099ovBM0FDJCvkopYNQ2a:Prandtl} \end{align} for the solution $u^{NS}$ of \eqref{8ThswELzXU3X7Ebd1KdZ7v1rN3GiirRXGKWK099ovBM0FDJCvkopYNQ2a:01}--\eqref{8ThswELzXU3X7Ebd1KdZ7v1rN3GiirRXGKWK099ovBM0FDJCvkopYNQ2a:03b} with initial data $u^{NS}_0$ which are analytic in both the $x$ and $y$ variables on the entire half space.  Here $u^P$ is the real-analytic solution of the Prandtl boundary layer equations. As a result, the strong inviscid limit in the energy norm follows easily.  We refer the reader to~\cite{AlexandreWangXuYang14,DietertGerardVaret19,GerardVaretMasmoudi13,IgnatovaVicol16,KukavicaMasmoudiVicolWong14,KukavicaVicol13,LiuYang16,LombardoCannoneSammartino03,MasmoudiWong15,Oleinik66,SammartinoCaflisch98a} for the well-posedness theory for the Prandtl equations, \cite{GerardVaretDormy10,GuoNguyen11,GerardVaretNguyen12,LiuYang17} for the identification of ill-posed regimes, and~\cite{Grenier00,GrenierGuoNguyen15,GrenierGuoNguyen16,GrenierNguyen17,GrenierNguyen18a} for recent works which show the  invalidity of the \therere{Pf 6T ZBQR la1a 6Aeker Xg k blz nSm mhY jc z3io WYjz h33sxR JM k Dos EAA hUO Oz aQfK Z0cn 5kqYPn W7} Prandtl expansion at the level of Sobolev regularity. \ghoahbkseibhseir In a remarkable work~\cite{Maekawa14}, Maekawa proved that the inviscid limit also holds for initial datum with Sobolev regularity whose associated vorticity is supported  away from the boundary. In this paper, the author use the vorticity formulation of the Navier-Stokes system in the half space~\cite{Anderson89,Maekawa13}, taking advantage of the weak interaction between the boundary vorticity and the bulk flow inside the domain to establish the validity of the expansion~\eqref{8ThswELzXU3X7Ebd1KdZ7v1rN3GiirRXGKWK099ovBM0FDJCvkopYNQ2a:Prandtl}.  For energy based proofs of the Caflisch-Sammartino and  Maekawa results in 2D and 3D, we refer to~\cite{WangWangZhang17, FeiTaoZhang16,FeiTaoZhang18}. In~\cite{GerardVaretMaekawaMasmoudi16}, Gerard-Varet, Maekawa, and Masmoudi establish the stability in a Gevrey topology in $x$ and a Sobolev topology in $y$, of Euler+Prandtl shear flows (cf.~\eqref{8ThswELzXU3X7Ebd1KdZ7v1rN3GiirRXGKWK099ovBM0FDJCvkopYNQ2a:Prandtl}), when the Prandtl shear flow is both monotonic and concave. Also, the very recent works~\cite{GerardVaretMaekawa18,GuoIyer18,GuoIyer18a,Iyer18} establish the vanishing viscosity limit and the validity of the Prandtl expansion for the stationary  Navier-Stokes equation in certain regimes.  In 2018, Nguyen-Nguyen have found in~\cite{NguyenNguyen18} a very elegant proof of the Sammartino-Caflisch result, which for the first time completely avoids the usage of Prandtl boundary layer correctors.  In a more recent  work, I~Kukavica, V.~Vicol, and the author in~\cite{KukVicWan19} bridged the gap between the Sammartino-Caflisch~\cite{SammartinoCaflisch98a,SammartinoCaflisch98b} and the Maekawa~\cite{Maekawa14} results, by proving that the inviscid limit in the energy norm holds for initial datum $\omega_0$ which is analytic in a strip of $\OO(1)$ width close to the boundary, and is Sobolev smooth on the complement of this strip.  The main result of \cite{KukVicWan19} shows that if the initial data is analytic in a strip of constant size near the boundary, then the solution will remain analytic in a strip of constant size in an $\mathcal{O}(1)$ time interval. This result establishes the inviscid limit in the energy norm for the largest class of initial data, in the absence of structural or symmetry assumptions. \ghoahbkseibhseir \ghoahbkseibhseir \ghoahbkseibhseir \ghoahbkseibhseir The main goal of this paper is to address the 3D inviscid limit problem assuming the initial datum $\omega_0$ is analytic in a strip close to the boundary  $\partial\HH$, and is Sobolev smooth on the complement. This problem is more complicated than the 2D case considered in~\cite{KukVicWan19} because of the nonlinear vortex stretching term $\omega\cdot\nabla u$ and the incompatibility of the third component of the the vorticity with the other components. Compared with the 2D case, the main difference in 3D is that the vorticity \therere{ 1 vCT 69a EC9 LD EQ5S BK4J fVFLAo Qp N dzZ HAl JaL Mn vRqH 7pBB qOr7fv oa e BSA 8TE btx y3 jwK3 v2} is a vector instead of a scalar function, as a result of which we have the extra vortex stretching term $\omega\cdot\nabla u$ in the nonlinearity. As it turns out the terms $\omega\cdot\nabla u$ and $u\cdot\nabla \omega$ have very different nature since $\omega\partial_{z}u$ could potentially be of the same size os $\omega\cdot\omega$ which we are expecting to be of the order of $\mathcal{O}\left(1/\nu\right)$ near the boundary. Since our weight function is designed to only capture the $\mathcal{O}\left(1/\sqrt{\nu}\right)$ growth of the vorticity near the boundary, the quadratic nonlinearity could be out of control. In order to overcome this difficulty, we need to give a more precise  description of the vorticity near the boundary. From the equation for the vorticity (see~\eqref{eq:vor}--\eqref{bdry:ver}) we can see there are mixed boundary conditions, i.e., the Neumann condition for the horizontal components and the Dirichlet boundary condition for the vertical component, as a result of which we may expect better behavior of the vertical vorticity. In fact we have $\omega_h \sim \mathcal{O}(1/\sqrt{\nu})$ and $\omega_z\sim \mathcal{O}(1)$ near the boundary. To capture this difference, we design the norm $\bar{\mathcal{X}}$ for the horizontal components and $\mathcal{X}$ for the vertical component (see~\eqref{nor:X:mu}). Another difficulty is that the vortical component and the horizontal \therere{44 dlfwRL Dc g X14 vTp Wd8 zy YWjw eQmF yD5y5l DN l ZbA Jac cld kx Yn3V QYIV v6fwmH z1 9 w3y D4Y ez} components of vorticity are coupled together and we have the nonlinearity $\omega\cdot u_3$, which could potentially be of the size $\mathcal{O}\left(1/\sqrt{\nu}\right)$, in the equation for $\omega_3$. However, there is not weight function $w(x)$ in the $\mathcal{X}$-norm to compensate this growth.  Roughly speaking, we  need to explore the structure of the equation and trade this growth with an extra horizontal regularity which we could afford. \ghoahbkseibhseir \ghoahbkseibhseir \startnewsection{Preliminaries}{sec02} \subsection{Notation} In this paper, the differential operators $\partial_{1}$, $\partial_{2}$, and $\partial_{3}$ stand for the derivatives in $x$, $y$, and $z$ directions respectively. We use the notation $(x_1, x_2, x_3)=(x_h, x_3)=(x_h, x_z)\in\RR^3$, where $x_h=(x_1, x_2)$ (h for horizontal). Correspondingly we write $\nabla=(\nabla_h, \partial_{z})=(\nabla_h, \partial_{3})$. For an index $\alpha\in\mathbb{Z}^3$, we denote $\nabla^\alpha=\partial_{x}^{\alpha_1}\partial_{y}^{\alpha_2}\partial_{z}^{\alpha_3}$. The square root of the Laplace operator in the  horizontal directions is denoted by $\Lambda_h = \sqrt{\Delta_h}$. Note that by definition we have $\Lambda_h = |\nabla_h|$. We define the conormal differential operator $D=(\partial_{x}, \partial_{y}, z\partial_{z})$ and $D^\alpha=\partial_{x}^{\alpha_1}\partial_{y}^{\alpha_2}(z\partial_{z})^{\alpha_3}$  for an index $\alpha\in\mathbb{Z}^3$ (not to be confused with $\nabla^\alpha$). We use $f_\xi(z) \in \CC$ to denote the Fourier transform of $f$ with respect to the horizontal variables $x$ and $y$ at frequency $\xi=(\xi_1, \xi_2) \in \ZZ^2$ and $u_{i,\xi}$ for the Fourier transform of $u_i$ in $x$ and $y$ for $i=1, 2, 3$.  For $\mu >0$  we define the complex domain  \[\Omega_\mu=\{z\in \CC: 0 \leq \Re z \leq 1, |\Im z|\le \mu \Re z\} \cup \{z\in \CC: 1 \leq \Re z \leq 1+\mu, |\Im z|\le 1 + \mu -  \Re z  \}.\] For $z\in \Omega_\mu$ we represent exponential terms of the form $e^{\ee(1+\mu-\Re z)_+|\xi|}$ simply as $e^{\ee(1+\mu-z)_+|\xi|}$. That is, in order to simplify the notation we write $y$ instead of $\Re y$ inside the exponential. We consider $\nu\in(0,1]$ as a small parameter and we assume $t \in (0,1]$ throughout. The implicit constants in $\les$ depend only on $\mu_0$ and $\theta_0$ (cf.~\eqref{8ThswELzXU3X7Ebd1KdZ7v1rN3GiirRXGKWK099ovBM0FDJCvkopYNQ2a101}), and are thus  universal.  \ghoahbkseibhseir  \ghoahbkseibhseir \subsection{Vorticity formulation deduction} Compared with the 2D case, the vorticity formulation of the 3D~Navier-Stokes equations we use in this paper is more involved, since we have the Neumann boundary condition~\eqref{bdry:hor} for the horizontal component $\omega_h$ and the Dirichlet boundary condition~\eqref{bdry:ver} for the vertical component $\omega_3$. Also the vorticity equation contains the vortex stretching term in~\eqref{eq:vor} which is absent in the 2D case. Moreover, the Biot-Savart law is  the curl of the Newtonian potential of $\omega$, resulting in more complexity.  \ghoahbkseibhseir Taking the curl of equation~\eqref{8ThswELzXU3X7Ebd1KdZ7v1rN3GiirRXGKWK099ovBM0FDJCvkopYNQ2a:01} gives   \begin{equation} \notag   \omega_t + u\cdot\nabla\omega -\nu\Delta\omega =\omega\cdot\nabla u,   \end{equation} where $u$ is recovered by the Biot-Savart law    \begin{equation}   \label{bio:sav}   u=\curl W   \end{equation}  with $W=(-\Delta)^{-1}\omega$. For the   boundary conditions in this setting note that   \begin{equation}    \notag   \omega_3|_{z=0}= \partial_{1}u_2-\partial_{2}u_1|_{z=0}=0.   \end{equation} For the horizontal components, we have   \begin{align} \notag   u_1|_{z=0}=\partial_{2}W_3-\partial_{3}W_2|_{z=0}=-\partial_{3}W_2|_{z=0},   \end{align}   from where we obtain   \begin{align} \notag   -\partial_{t}u_1|_{z=0}=\partial_{3}(-\Delta)^{-1}\partial_{t}\omega_2|_{z=0}   =\partial_{3}(-\Delta)^{-1}(-u\cdot\nabla\omega_2+\omega\cdot\nabla   u+\nu\Delta\omega)|_{z=0}.   \end{align}  Since   \begin{equation}   \notag   \partial_{3}(-\Delta)^{-1}\Delta\omega_2|_{z=0}   =   -(\partial_{3}\omega_2+\Lambda_h\omega_2)|_{z=0}   \end{equation}   where $\Lambda_h=\sqrt{-\Delta_h}$,   by   $u_1|_{z=0}=0$ we further get   \therere{R M9 BduE L7D9 2wTHHc Do g ZxZ WRW Jxi pv fz48 ZVB7 FZtgK0 Y1 w oCo hLA i70 NO Ta06 u2sY GlmspV l2 } \begin{align} \notag   \nu(\partial_{3}+\Lambda_h)\omega_2|_{z=0}   =\partial_{3}(-\Delta)^{-1}(-u\cdot\nabla\omega_2+\omega\cdot\nabla   u_2)|_{z=0}.   \end{align}   A similar computation for $\omega_1$   gives    \begin{align} \notag   \nu(\partial_{3}+\Lambda_h)\omega_1|_{z=0}   =\partial_{3}(-\Delta)^{-1}(-u\cdot\nabla\omega_1+\omega\cdot\nabla    u_1)|_{z=0}.   \end{align}     In summary, we have the vorticity   formulation for the $3$D N--S equations with mixed boundary   conditions   \begin{align}   &\omega_t + u\cdot\nabla\omega   -\nu\Delta\omega =\omega\cdot\nabla u  \label{eq:vor}  \\  &   \nu(\partial_{3}+\Lambda_h)\omega_h|_{z=0}   =\partial_{3}(-\Delta)^{-1}(-u\cdot\nabla\omega_h+\omega\cdot\nabla   u_h)|_{z=0}  \label{bdry:hor}  \\ &   \omega_3|_{z=0}=0.      \label{bdry:ver}   \end{align} \ghoahbkseibhseir \ghoahbkseibhseir \subsection{Integral representation of \therere{x y0X B37 x43 k5 kaoZ deyE sDglRF Xi 9 6b6 w9B dId Ko gSUM NLLb CRzeQL UZ m i9O 2qv VzD hz v1r6 spS} the solution to the Navier-Stokes equations} In this section, we derive the mild formulation of the system~\eqref{eq:vor}--\eqref{bdry:ver}.  Due to the combined boundary condition~\eqref{bdry:hor}--\eqref{bdry:ver} in 3D, the Green's kernel is given as a matrix in~\eqref{gre:fun} instead of a scalar function in 2D case.  For $\xi \in \ZZ^2$, let \begin{equation} \notag   N_\xi(s, z)= (\omega\cdot\nabla u -u\cdot\nabla\omega)_\xi(s, z)   \end{equation}   stand for the Fourier transform in the $x$ and  $y$ variables of the  nonlinear terms in the vorticity formulation of the Navier-Stokes  system. We denote by   \begin{equation} \notag  B_{h,\xi}(s)=(\partial_{z}(-\Delta)^{-1}(\omega\cdot\nabla u_h  -u\cdot\nabla\omega_h))_\xi(s)|_{z=0}   \end{equation}   the boundary  condition for the horizontal components. Then the  system~\eqref{eq:vor}--\eqref{bdry:ver} is rewritten as a Stokes  problem   \begin{subequations}   \label{eq:Stokes}   \begin{align}  \partial_{t}\omega_\xi - \nu\Delta_\xi\omega_\xi &= N_\xi \nonumber\\   \nu(\partial_{z}+|\xi|)\omega_{h,\xi}|_{z=0} &= B_{h,\xi}    \label{Neumann}  \\   \omega_{3,\xi}|_{z=0} & = 0.   \label{Dirichlet}   \end{align}   \end{subequations} where   $\Delta_\xi= -|\xi|^2+\partial_{z}^2$  is considered with a   Dirichlet boundary condition at $z=0$.  To obtain estimates in the   analytic region, we use the mild formulation for the above   system. Denoting the Green's function for this Stokes   system~\eqref{eq:Stokes} by $G_\xi(t, z, \zz)$, then Green's    function is of the form   \begin{equation}   \label{gre:fun}   G_\xi   =   \begin{pmatrix}   G_{1,\xi} & 0 & 0\\   0 & G_{1,\xi} &   0\\   0 & 0 &G_{2,\xi}   \end{pmatrix}   \end{equation} where the   kernels $G_{1,\xi}$ and $G_{2,\xi}$ are the Green's functions for a   heat equation with the Neumann~\eqref{Neumann} and the   Dirichlet~\eqref{Dirichlet} boundary conditions respectively in the   half space. The kernel $G_{1, \xi}$ is given in   Theorem~\ref{Green:fun} below and $G_{2, \xi}$ has the explicit   formula   \begin{equation} \notag   G_{2,\xi}(t, z, \zz) =  \frac{1}{\sqrt{\nu t}}\left(e^{-\frac{(y-z)^2}{4\nu t}} - e^{-\frac{(y+z)^2}{4\nu t}} \right)e^{-\nu|\xi|^2t}.   \end{equation}  Using the Green's kernel $G_\xi$, we may represent   the solution for the Stokes system~\eqref{eq:Stokes}  as   \begin{align}   \label{kernel:est}   \omega_\xi (t,z) = &   \int_0^\infty G_\xi(t, z, \zz)\omega_{0\xi}(\zz)\,d\zz + \int_0^t   \int_0^\infty G_\xi(t-s, z, \zz)  N_\xi(s, \zz) \,d\zz ds   \nonumber\\&   + \int_0^t G_\xi(t-s, z,0) (B_{h,\xi}(s), 0) \,ds   \end{align} where $\omega_{0\xi}(z)$ is the Fourier transform of the    initial datum. \begin{remark} \label{R03} In the Duhamel formula   \eqref{kernel:est}, we should understand each term on the right side   as an integral in the complex region $z,\zz\in \Omega_{\mu}\cup   [1+\mu,\infty)$ as explained in \cite{NguyenNguyen18}, i.e., the   path is inside the complex domain from $0$ to $\infty$.  For $z\in \Omega_\mu$, we may find $\theta \in [0,\mu)$ such that $z \in \partial\Omega_\theta$. If $\Im z \geq 0$, we integrate over the complex contour $\gamma_\theta^+ = (\partial \Omega_\theta \cap \{ \zz \colon \Im \zz \geq 0 \})  \cup [1+\theta ,\infty)$. Otherwise we choose the path which is symmetric with  $\gamma_\theta^+$ to the line $\{ \zz \colon \Im \zz = 0 \}$.  Moreover, the Green's function $G_\xi(t,z,\zz)$ from Lemma~\ref{Green:fun}, which appears in \eqref{kernel:est}, has a natural extension to the complex domain $\Omega_\mu \cup [1+\mu,\infty)$, by complexifying the heat kernels involved. Since for $z\in \Omega_\mu$ we have $|\Im z| \leq \mu \Re z$, for $\mu$ small, we have that $|z|$ is comparable to $\Re z$. Therefore, the upper bounds we have available  for the complexified heat kernel $\tilde H_\xi$ and for the residual kernel $R_\xi$ may be written in terms of $\Re z, \Re \zz \geq 0$. Because of this, we perform all the estimates in the real variables and the corresponding estimates for the complex variables follow similarly. \end{remark}  \ghoahbkseibhseir An estimate of the Green's function $G_{1,\xi}$ of the Stokes system is given in the next lemma. For its proof, we refer to~\cite[Proposition 3.3 and Section 3.3]{NguyenNguyen18}. \ghoahbkseibhseir \cole \begin{lemma}[Nguyen \& Nguyen] \label{Green:fun} We have the following representation of the Green's function $G_{1, \xi}$    \begin{equation}   \llabel{8Th sw ELzX U3X7 Ebd1Kd Z7 v 1rN 3Gi irR XG KWK0 99ov BM0FDJ Cv k opY NQ2 8ThswELzXU3X7Ebd1KdZ7v1rN3GiirRXGKWK099ovBM0FDJCvkopYNQ2a30}   G_{1, \xi} = \tilde H_\xi + \tilde R_\xi   \end{equation} where   \begin{align}   \llabel{aN9 4Z 7k0U nUKa   mE3OjU 8D F YFF okb SI2 J9 V9gV lM8A LWThDP nP u 3EL 7HP   8ThswELzXU3X7Ebd1KdZ7v1rN3GiirRXGKWK099ovBM0FDJCvkopYNQ2a31}    \tilde H_\xi(t, z ,\zz) = \frac{1}{\sqrt{\nu   t}}\left(e^{-\frac{(z-\zz)^2}{4\nu t}} + e^{-\frac{(z+\zz)^2}{4\nu   t}} \right)e^{-\nu|\xi|^2t}   \end{align} is the one dimensional   heat kernel for the half space with homogeneous Neumann boundary   condition; the residual kernel may be further decomposed as  $\tilde   R_{\xi}= R^{(1)}_{\xi}+ R^{(2)}_{\xi}$, with the two kernels satisfying the bounds   \begin{equation}       |\partial_{z}^{k}R^{(1)}_\xi(t, z, \zz)|   \lesssim b^{k+1} e^{-\theta_0b(z+\zz)}     \comma k\in{\mathbb N}_0   \label{8ThswELzXU3X7Ebd1KdZ7v1rN3GiirRXGKWK099ovBM0FDJCvkopYNQ2a101}   \end{equation} and   \begin{equation}       |\partial_{z}^{k}   R^{(2)}_\xi(t, z, \zz)|   \lesssim \frac{1}{(\nu t)^{(k+1)/2}}   e^{-\frac{(z+\zz)^2}{\nu t}}e^{-\frac{\nu|\xi|^2t}{8}}    \comma k\in{\mathbb N}_0    \label{8ThswELzXU3X7Ebd1KdZ7v1rN3GiirRXGKWK099ovBM0FDJCvkopYNQ2a326}    \end{equation} where  $\theta_0>0$ is a constant and the boundary    remainder coefficient  is given by $b=b(\xi, \nu)=    |\xi|+\sqrt{\nu}^{-1}$. The implicit constants in     \eqref{8ThswELzXU3X7Ebd1KdZ7v1rN3GiirRXGKWK099ovBM0FDJCvkopYNQ2a101}    and    \eqref{8ThswELzXU3X7Ebd1KdZ7v1rN3GiirRXGKWK099ovBM0FDJCvkopYNQ2a326}    depend on $k$. \end{lemma}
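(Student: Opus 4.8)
The plan is to build $G_{1,\xi}$ through the Laplace transform in time, which turns the homogeneous version of the Stokes problem \eqref{eq:Stokes} with the Robin-type boundary condition \eqref{Neumann} into a one–dimensional resolvent problem that can be solved in closed form, and then to invert the transform explicitly; the error-function structure that appears organizes itself exactly into the two pieces $R^{(1)}_\xi$ and $R^{(2)}_\xi$.

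First I would Laplace-transform in $t$. Writing $\mu=\mu(s,\xi)=\sqrt{|\xi|^2+s/\nu}$ with the principal branch (so $\Re\mu>0$ for $\Re s>0$), the resolvent equation $(-\nu\partial_z^2+\nu\mu^2)\hat G=\delta(z-\zz)$ on $z>0$ together with $(\partial_z+|\xi|)\hat G|_{z=0}=0$ and decay at infinity is solved by variation of parameters, giving
\[
\hat G_{1,\xi}(s,z,\zz)=\frac{1}{2\nu\mu}\Big(e^{-\mu|z-\zz|}+e^{-\mu(z+\zz)}\Big)+\frac{1}{2\nu\mu}\,\frac{2|\xi|}{\mu-|\xi|}\,e^{-\mu(z+\zz)}.
\]
The first group is the resolvent of the one–dimensional Neumann heat semigroup and inverts to $\tilde H_\xi$. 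It remains to invert the residual $\hat R_\xi=\frac{|\xi|}{\nu\mu(\mu-|\xi|)}e^{-\mu(z+\zz)}$.

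For the residual I would use the partial fraction $\frac{|\xi|}{\mu(\mu-|\xi|)}=\frac{1}{\mu-|\xi|}-\frac1\mu$ together with the classical inverse-Laplace pairs for $e^{-\mu(z+\zz)}$ and $\tfrac1\mu e^{-\mu(z+\zz)}$; after completing the square, the Gaussian produced by $\tfrac1{\mu-|\xi|}$ cancels exactly the one produced by $\tfrac1\mu$, leaving the single closed form
\[
\tilde R_\xi(t,z,\zz)=|\xi|\,e^{-|\xi|(z+\zz)}\,\mathrm{erfc}\!\left(\frac{z+\zz}{2\sqrt{\nu t}}-|\xi|\sqrt{\nu t}\right).
\]
(As a check, letting $t\to\infty$ sends the $\mathrm{erfc}$ to $2$ and recovers the stationary resonance mode $2|\xi|e^{-|\xi|(z+\zz)}$ of the Robin problem.) I would then set $R^{(2)}_\xi$ to be the Gaussian-dominated part and $R^{(1)}_\xi$ the remainder, and estimate each $\partial_z^k$. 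For $R^{(1)}_\xi$ I bound $\mathrm{erfc}\le e^{-(\cdot)^2}$ to absorb the growth $e^{-|\xi|(z+\zz)}$: the key point is that the non-Gaussian behavior of $\mathrm{erfc}$ is confined to $z+\zz\les\nu t|\xi|$, where $|\xi|(z+\zz)$ is itself $\OO(1)$, so the product obeys the time-uniform bound $b^{k+1}e^{-\theta_0 b(z+\zz)}$ with $b=|\xi|+\nu^{-1/2}$. Each $z$-derivative either falls on $e^{-|\xi|(z+\zz)}$, producing a factor $|\xi|\le b$, or produces a Gaussian piece that is instead placed in $R^{(2)}_\xi$, which carries the heat-kernel scaling $(\nu t)^{-(k+1)/2}$ and the damping $e^{-\nu|\xi|^2t/8}$ extracted from $e^{-\nu|\xi|^2 t}$ when converting prefactors $|\xi|^j$ into powers of $(\nu t)^{-1/2}$. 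Finally, by Remark~\ref{R03} these real-variable bounds transfer to the complex domain $\Omega_\mu\cup[1+\mu,\infty)$ by complexifying the heat kernels, since there $|z|$ is comparable to $\Re z$.

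The hard part will be the clean separation of $\tilde R_\xi$ into $R^{(1)}_\xi$ and $R^{(2)}_\xi$ with the sharp boundary-layer scale $b=|\xi|+\nu^{-1/2}$ and constants uniform in $(\nu,t,\xi)$. The subtlety is that $\tilde R_\xi$ literally contains the slowly decaying factor $e^{-|\xi|(z+\zz)}$, which by itself decays only on the coarse scale $1/|\xi|$; one must use the $\mathrm{erfc}$ cut-off to show that, once a derivative or the Gaussian tail is accounted for, the kernel nevertheless decays on the much finer combined scale $1/b\sim\sqrt\nu$ of the Navier--Stokes boundary layer, uniformly for $t\in(0,1]$ and down to $t\to0^+$, where the derivative pieces become singular and must be charged to $R^{(2)}_\xi$. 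This balancing of the horizontal frequency $|\xi|$ against the viscous length $\sqrt\nu$ is carried out in detail in \cite{NguyenNguyen18}.
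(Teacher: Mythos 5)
The paper gives no proof of this lemma, deferring entirely to \cite[Proposition 3.3 and Section 3.3]{NguyenNguyen18}, and your sketch is a faithful reconstruction of exactly that argument: the resolvent formula with $A=(\mu+|\xi|)/(\mu-|\xi|)$, the partial fraction $\tfrac{|\xi|}{\mu(\mu-|\xi|)}=\tfrac1{\mu-|\xi|}-\tfrac1\mu$, and the resulting closed form $\tilde R_\xi=|\xi|e^{-|\xi|(z+\zz)}\mathrm{erfc}\bigl(\tfrac{z+\zz}{2\sqrt{\nu t}}-|\xi|\sqrt{\nu t}\bigr)$ are all correct (I checked the Laplace inversion), and the subsequent case split on $z+\zz\gtrless 2\nu t|\xi|$ does yield the two stated bounds with $b=|\xi|+\nu^{-1/2}$. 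The only imprecision is the parenthetical claim that $|\xi|(z+\zz)=\OO(1)$ in the boundary-layer regime (the quantity that is actually controlled there is $(\theta_0 b-|\xi|)(z+\zz)$), but this does not affect the validity of the outline.
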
 \colb \ghoahbkseibhseir \begin{remark} \label{R01} From the estimates \eqref{8ThswELzXU3X7Ebd1KdZ7v1rN3GiirRXGKWK099ovBM0FDJCvkopYNQ2a101} and \eqref{8ThswELzXU3X7Ebd1KdZ7v1rN3GiirRXGKWK099ovBM0FDJCvkopYNQ2a326}, the residual kernel $R_{\xi}$ satisfies     \begin{align}    | (z\partial_{z})^k   R_\xi (t,z,\zz)|     &\lesssim b \, ((z b)^k +1) e^{-\theta_0b(z+\zz)}    +   \biggl(\left(\frac{z}{\sqrt{\nu t}}\right)^k +1 \biggr)           \frac{1}{\sqrt{\nu t}}e^{- \theta_0 \frac{(z+\zz)^2}{\nu t}}e^{-\frac{\nu|\xi|^2t}{8}}  \notag\\ &\lesssim be^{-\frac{\theta_0}{2}b(z+\zz)}     +  \frac{1}{\sqrt{\nu t}}e^{- \frac{\theta_0}{2} \frac{(z+\zz)^2}{\nu t}}e^{-\frac{\nu|\xi|^2t}{8}}    \llabel{D2V Da ZTgg zcCC mbvc70 qq P  cC9 mt6 0og cr TiA3 HEjw TK8ymK eu J Mc4 q6d 8ThswELzXU3X7Ebd1KdZ7v1rN3GiirRXGKWK099ovBM0FDJCvkopYNQ2a33} \end{align}   for $k \in \{0,1,2\}$.  \end{remark} \ghoahbkseibhseir \ghoahbkseibhseir \startnewsection{Main Results}{sec03} \subsection{Norms} In this paper, we use two types of analytic norms: the $L^\infty$ based $X$ norm and the $L^1$ based $\YY$  norm defined in \eqref{8ThswELzXU3X7Ebd1KdZ7v1rN3GiirRXGKWK099ovBM0FDJCvkopYNQ2a17} and \eqref{8ThswELzXU3X7Ebd1KdZ7v1rN3GiirRXGKWK099ovBM0FDJCvkopYNQ2a25} respectively.  \therere{l jwNhG6 s6 i SdX hob hbp 2u sEdl 95LP AtrBBi bP C wSh pFC CUa yz xYS5 78ro f3UwDP sC I pES HB1 qFP} Compared with the uniform $X$ analytic norm in 2D introduced in~\cite{KukVicWan19}, we need to define an $\bar\xx$ analytic norm for the horizontal  components and an $\xx$ norm for the vertical component due to the very different behaviors (i.e., the vorticity in vertical direction is much smaller than in the horizontal plane). These norms are designed to capture the analytic features of a  solution in the domain $\Omega_\mu$ with the radius of analyticity in horizontal directions decreasing linearly. For the $\YY$ analytic norm, we need one more derivative in the horizontal direction than the 2D case, which is the key to compensate the incompatibility of the two norms $\bar \xx$  and $\xx$. We also use a weighted $H^5$ norm to describe the Sobolev regularity at an $\OO(1)$ distance away from the boundary. \ghoahbkseibhseir In order to define the weighted $L^\infty$ based analytic norm $X$, we introduce a $L^\infty$ norm   \begin{equation}   \lVert f\rVert_{\ZL_{\mu,\nu}} = \sup_{z\in\Omega_\mu} \ww(\Re z)   |f(z)|   \llabel{Vz2 00 XnYU tLR9 GYjPXv FO V r6W 1zU K1W bP ToaW   JJuK nxBLnd 0f t DEb Mmj   8ThswELzXU3X7Ebd1KdZ7v1rN3GiirRXGKWK099ovBM0FDJCvkopYNQ2a15}   \end{equation} over the domain $\Omega_\mu$ for a complex function   $f$, where the  weight function $w \colon [0,1+\mu_0] \to [0,1]$  is   given by    $ w(z)     =\max( \sqrtnu, z)$ for $0\le z\leq 1$ and   $1$ otherwise; cf.~Proposition~\ref{wei} below for the needed    properties of the weight function. Note that the $\mu$ independence   of the norm $\lVert f\rVert_{\ZL_{\mu,\nu}}$ is encoded in $w$. In order to simplify the notation, we suppress the $\Re$ symbol when there is no confusion. For instance, we write $w(z)$ instead of $w(\Re z)$ in the above formula. We also use the $L^\infty$ based norm without weight    \begin{equation}   \lVert f\rVert_{\ZL_{\mu}} =   \sup_{z\in\Omega_\mu} |f(z)|   \llabel{4lo HY yhZy MjM9 1zQS4p 7z 8   eKa 9h0 Jrb ac ekci rexG 0z4n3x z0 Q OWS vFj   8ThswELzXU3X7Ebd1KdZ7v1rN3GiirRXGKWK099ovBM0FDJCvkopYNQ2a15}   \end{equation} for the vertical component of a function $f$. For a   sufficiently small constant  $\ee \in (0,1)$ to be determined below,   using the $\ZL_{\mu,\nu}$ and $\ZL_{\mu}$ norms, we define   \begin{equation}   \llabel{3jL hW XUIU 21iI AwJtI3 Rb W a90 I7r zAI   qI 3UEl UJG7 tLtUXz w4 K QNE TvX   8ThswELzXU3X7Ebd1KdZ7v1rN3GiirRXGKWK099ovBM0FDJCvkopYNQ2a18}   \lVert f \rVert_{\bar\xx_{\mu}} = \sum_{\xi \in \ZZ^2} \lVert   e^{\ee(1+\mu-z)_+|\xi|}f_{\xi}\rVert_{\ZL_{\mu,\nu}}  \,    \end{equation}  and    \begin{equation}   \llabel{zqW au jEMe nYlN IzLGxg B3 A uJ8  6VS 6Rc PJ 8OXW w8im tcKZEz Ho p 84G 1gS   8ThswELzXU3X7Ebd1KdZ7v1rN3GiirRXGKWK099ovBM0FDJCvkopYNQ2a:X}   \lVert  f \rVert_{\xx_{\mu}} = \sum_{\xi \in \ZZ^2} \lVert  e^{\ee(1+\mu-z)_+|\xi|}f_{\xi}\rVert_{\ZL_{\mu}}.  \,  \end{equation}  Note that the constant $\ee$ depends only on the  parameter $\theta_0$. We need to differentiate the horizontal  components of $f$ from the vertical one and further denote  \begin{equation}   \label{nor:X:mu}   \lVert f \rVert_{X_{\mu}} =  \lVert f_h \rVert_{\bar\xx_{\mu}} + \lVert f_3 \rVert_{\xx_{\mu}}.   \end{equation} With $t$ as in \eqref{8ThswELzXU3X7Ebd1KdZ7v1rN3GiirRXGKWK099ovBM0FDJCvkopYNQ2a:time:restrict}, we define the analytic $X$ norm as   \begin{align}   \lVert f\rVert_{X(t)}   = \sup_{\mu<\mu_0-\gamma t}    \biggl( \sum_{0\le |\alpha| \le1}\lVert D^\alpha f\rVert_{X_\mu} +    \sum_{|\alpha|=2} (\mu_0-\mu-\gamma t)^{1/2+\AA}\lVert D^\alpha f\rVert_{X_\mu} \biggr)    \, \label{8ThswELzXU3X7Ebd1KdZ7v1rN3GiirRXGKWK099ovBM0FDJCvkopYNQ2a17}  \end{align} where $\aa\in\left(0,     \frac{1}{2} \right) $ is a fixed   constant and $\gamma>0$ is a sufficiently large constant to be determined. It depends only on $\mu_0$ and the size of the initial datum. Throughout the paper we let $t$ obey \begin{align}  t \in \left(0, \frac{\mu_0}{2\gamma} \right)\,. \label{8ThswELzXU3X7Ebd1KdZ7v1rN3GiirRXGKWK099ovBM0FDJCvkopYNQ2a:time:restrict} \end{align}  \ghoahbkseibhseir For the definition of the analytic $L^1$ based norm, the weight function is not needed, and thus we define   \begin{equation}   \llabel{As0 PC owMI 2fLK TdD60y nH g 7lk NFj JLq Oo Qvfk fZBN G3o1Dg Cn 9 hyU h5V 8ThswELzXU3X7Ebd1KdZ7v1rN3GiirRXGKWK099ovBM0FDJCvkopYNQ2a24}   \lVert f\rVert_{\SL_\mu} = \sup_{0\le\theta<\mu} \lVert   f\rVert_{L^1(\partial\Omega_\theta)}   \,   \end{equation} where $f$   is a complex valued function over $\Omega_\mu.$ Using the $\SL_\mu$   norm we introduce \begin{equation}   \llabel{SP5 z6 1qvQ wceU dVJJsB   vX D G4E LHQ HIa PT bMTr sLsm tXGyOB 7p 2 Os4 3US   8ThswELzXU3X7Ebd1KdZ7v1rN3GiirRXGKWK099ovBM0FDJCvkopYNQ2a27}   \lVert f \rVert_{\YY_\mu} = \sum_\xi \lVert    e^{\ee(1+\mu-z)_+|\xi|}f_\xi\rVert_{\SL_\mu}.   \end{equation}  Now   we  are ready to define the analytic $Y$ norm as   \begin{align}   \label{8ThswELzXU3X7Ebd1KdZ7v1rN3GiirRXGKWK099ovBM0FDJCvkopYNQ2a25}   \lVert f\rVert_{\YY(t)}=  \sup_{\mu<\mu_0-\gamma t}   &\biggl(\sum_{0\le |\alpha|\le1} \norm{D^\alpha (1+ |\nabla_h|)   f}_{Y_\mu}  +\sum_{|\alpha|=2}  (\mu_0-\mu-\gamma t)^\AA   \norm{D^\alpha (1+ |\nabla_h|)f}_{Y_\mu}   \biggr)    \,.   \end{align} \ghoahbkseibhseir   \ghoahbkseibhseir Next we introduce   two kinds of Sobolev norms. The first type is  weighted $L^2$ in   $y$, $\ell^1$ in $\xi$ norm  $S_\mu$ given by   \begin{equation}   \llabel{bq5 ik 4Lin 769O TkUxmp I8 u GYn fBK bYI 9A QzCF w3h0 geJftZ   ZK U 74r Yle   8ThswELzXU3X7Ebd1KdZ7v1rN3GiirRXGKWK099ovBM0FDJCvkopYNQ2a20}   \lVert f \rVert_{S_\mu} = \sum_\xi \lVert z f_\xi\rVert_{L^2(z\ge   1+\mu)}   \,,   \end{equation} which is compatible with the above   analytic norms. We also need a second type of Sobolev norm, i.e., an   $L^2$ based $\ZZZ$-norm inside the half space as  \begin{align}    \norm{f}_{\ZZZ}     =  \sum_{0\le |\alpha|\le5} \norm{\nabla^{\aa}   f}_S \,    \llabel{ajm km ZJdi TGHO OaSt1N nl B 7Y7 h0y oWJ ry rVrT   zHO8 2S7oub QA W x9d z2X   8ThswELzXU3X7Ebd1KdZ7v1rN3GiirRXGKWK099ovBM0FDJCvkopYNQ2a54}   \end{align} where the $S$-norm is a weighted $L^2$ norm (with respect to both $x$ and $y$):  \begin{equation}   \llabel{YWB e5 Kf3A LsUF vqgtM2 O2 I dim rjZ 7RN 28 4KGY trVa WW4nTZ XV b RVo Q77 8ThswELzXU3X7Ebd1KdZ7v1rN3GiirRXGKWK099ovBM0FDJCvkopYNQ2a19}   \lVert f\rVert_{S}^2 = \norm{y f}_{L^2(y\geq 1/2)}^2 =  \sum_\xi \lVert y f_\xi\rVert_{L^2(y\ge 1/2)}^2.   \end{equation} Note that we should think the $\ZZZ$-norm as  a weighted version of the Sobolev $H^5$ norm. Lastly, for fixed $\mu_0,\gamma>0$, and with $t$ which obeys \eqref{8ThswELzXU3X7Ebd1KdZ7v1rN3GiirRXGKWK099ovBM0FDJCvkopYNQ2a:time:restrict}, we introduce the notation \begin{align} \NORM{f}_t = \norm{f}_{X(t)} + \norm{f}_{\YY(t)} + \norm{f}_{\ZZZ} \notag \end{align} for the {\em cumulative} time-dependent norm used in this paper.    \ghoahbkseibhseir  \ghoahbkseibhseir \subsection{Main results} Let  $\omega = \curl u$ be the vorticity associated to the velocity field $u$. The following are the main results of the paper. \ghoahbkseibhseir \cole \begin{theorem} \label{T01} Let $\mu_0>0$ and \therere{ SW 5tt0 I7oz jXun6c z4 c QLB J4M NmI 6F 08S2 Il8C 0JQYiU lI 1 YkK oiu bVt fG uOeg Sllv b4HGn3 bS Z} assume that $\omega_0$ is such that   $\NORM{\omega_0}_0 \le  M <\infty$.  Then there exists a $\gamma>0$ and a time $T>0$ depending on  $M$ and $  \mu_0$,  such that the solution $\omega$ to the system \eqref{eq:vor}--\eqref{bdry:ver} satisfies   \begin{equation} \llabel{hVL X6 K2kq FWFm aZnsF9 Ch p 8Kx rsc SGP iS tVXB J3xZ cD5IP4 Fu 9 Lcd TR2 8ThswELzXU3X7Ebd1KdZ7v1rN3GiirRXGKWK099ovBM0FDJCvkopYNQ2a35} \sup_{t\in [0,T]} \NORM{\omega(t)}_t \le C M   \end{equation} where the constant $C$ only depends $\mu_0$ and $\theta_0$ (cf.~\eqref{8ThswELzXU3X7Ebd1KdZ7v1rN3GiirRXGKWK099ovBM0FDJCvkopYNQ2a101}). \end{theorem}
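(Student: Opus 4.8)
The plan is to run a continuity (bootstrap) argument on the cumulative norm $\NORM{\omega(t)}_t$. After constructing the local solution by the standard iteration on the mild formulation \eqref{kernel:est}, I would assume on a maximal interval that $\NORM{\omega(t)}_t\le 2CM$ and then show this forces the improved bound $\NORM{\omega(t)}_t\le CM$, provided $\gamma$ is taken large and $T$ small; continuity of $t\mapsto\NORM{\omega(t)}_t$ then closes the argument. Everything is driven by \eqref{kernel:est}, so the task reduces to estimating, in each of the three constituent norms $X(t)$, $\YY(t)$, and $\ZZZ$, the three contributions on its right-hand side: the linear term $\int_0^\infty G_\xi\,\omega_{0\xi}$, the nonlinear Duhamel term $\int_0^t\!\!\int_0^\infty G_\xi N_\xi$, and the boundary forcing $\int_0^t G_\xi(\cdot,0)(B_{h,\xi},0)$. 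Throughout, $u$ is recovered from $\omega$ by the Biot--Savart law \eqref{bio:sav} and bounded in the analytic norms by $\omega$, and by Remark~\ref{R03} all estimates are performed on the real contour.

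For the linear term I would insert the splitting $G_{1,\xi}=\tilde H_\xi+\tilde R_\xi$ of Lemma~\ref{Green:fun} together with the explicit $G_{2,\xi}$. The heat part $\tilde H_\xi$ propagates the analytic norm with a radius of analyticity that contracts at rate $\gamma$; the contour shift from $\Omega_{\mu_0}$ to $\Omega_\mu$ and the factors of $|\xi|$ generated by the kernel are absorbed by this contraction together with the parabolic smoothing $e^{-\nu|\xi|^2t}$, which is exactly the role of the supremum over $\mu<\mu_0-\gamma t$ in \eqref{8ThswELzXU3X7Ebd1KdZ7v1rN3GiirRXGKWK099ovBM0FDJCvkopYNQ2a17}. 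The residual contributions are handled with Remark~\ref{R01}: both $b\,e^{-\frac{\theta_0}{2}b(z+\zz)}$ and the Gaussian tail localize near $z=0$ on the boundary-layer scale $\sqrt\nu$, so that the weight $w(z)=\max(\sqrt\nu,z)$ of Proposition~\ref{wei} exactly absorbs their $\OO(1/\sqrt\nu)$ size and yields a bound by $\NORM{\omega_0}_0\le M$.

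The nonlinear and boundary terms are the heart of the matter, and I expect them to be the main obstacle, since this is where the 3D problem departs from the scalar 2D analysis of~\cite{KukVicWan19}. My strategy rests on the refined description encoded in the two analytic norms $\bar\xx$ (for $\omega_h$, carrying the weight $w\sim 1/\sqrt\nu$) and $\xx$ (for $\omega_3$, unweighted, reflecting $\omega_3\sim\OO(1)$). In the vortex stretching $\omega\cdot\nabla u=\omega_h\cdot\nabla_h u+\omega_3\,\partial_z u$, the only vertical derivative $\partial_z u\sim\omega\sim\OO(1/\sqrt\nu)$ is paired precisely with the small component $\omega_3$, so that $\omega_3\,\partial_z u\sim\OO(1/\sqrt\nu)$ stays within the single power of $w$ that the output weight supplies, while the remaining pieces carry only horizontal derivatives and are harmless. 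In the transport term the dangerous piece is $u_3\,\partial_z\omega_h$ with $\partial_z\omega_h\sim\OO(1/\nu)$; here I would use $u_3|_{z=0}=0$ together with $\partial_z u_3=-\nabla_h\cdot u_h$ to write $u_3\sim z$ near the boundary, recovering the missing power of $w$. The genuinely new difficulty is the coupling in the equation for $\omega_3$, where the unweighted $\xx$ norm offers no room to absorb the $\OO(1/\sqrt\nu)$ size of $\omega_h$; here I trade this growth for the extra horizontal derivative $(1+|\nabla_h|)$ and the $L^1$-in-$z$ structure built into the $\YY$ norm \eqref{8ThswELzXU3X7Ebd1KdZ7v1rN3GiirRXGKWK099ovBM0FDJCvkopYNQ2a25}, which integrates out the boundary-layer concentration of $\omega_h$. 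In all cases the top-order time weight $(\mu_0-\mu-\gamma t)^{1/2+\AA}$ on the $D^\alpha$ contributions permits the one-derivative loss inherent in the products, in Cauchy--Kovalevskaya fashion, and the $s$-integration of the Green's kernel turns this into a gain of a small power of $T$ and a factor $\gamma^{-\AA}$; the boundary forcing is treated identically, using the regularizing $\partial_z(-\Delta)^{-1}$ in $B_{h,\xi}$ and the boundary-layer localization of the residual kernel.

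Finally, the interior Sobolev norm $\ZZZ$ is propagated by a direct weighted $H^5$ energy estimate on \eqref{eq:vor} over $\{z\ge 1/2\}$, where no $\nu$-dependent growth occurs and the transport and stretching nonlinearities are benign; the coupling between this region and the analytic strip is controlled because the analytic norm dominates $\omega$ on the overlap $\{z\le 1+\mu_0\}$. Combining the four estimates yields a bound of the schematic form $\NORM{\omega(t)}_t\le C_0M+C_1\,T^{\sigma}\gamma^{-\AA}\,(\NORM{\omega(t)}_t)^2$ for some $\sigma>0$; setting $C=2C_0$ and then choosing $\gamma$ large and $T$ small makes the quadratic term absorbable, which closes the bootstrap and establishes the claimed estimate.
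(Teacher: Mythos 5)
Your proposal follows essentially the same route as the paper: a priori estimates on the mild formulation \eqref{kernel:est} in each of the $X$, $\YY$, and $\ZZZ$ norms (splitting the initial-datum, Duhamel, and boundary-trace terms, exploiting the $\bar\xx$/$\xx$ distinction for $\omega_h$ versus $\omega_3$ and the extra horizontal derivative in $\YY$), followed by a quadratic absorption argument, which is exactly how the paper combines Lemmas~\ref{L01}, \ref{L09}, \ref{lem:main:X}, \ref{lem:main:Y}, and \ref{L12a} and closes via a barrier argument. The only cosmetic difference is that in the paper $T=\mu_0/2\gamma$ is slaved to $\gamma$ and the smallness factor is $\gamma^{-1/2}$ coming from integrating the Cauchy--Kovalevskaya singularity $(\mu_0-\mu-\gamma s)^{-\beta}$ in time, rather than an independent small power of $T$.
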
 \colb \ghoahbkseibhseir From the above result we further obtain the inviscid limit. \cole \begin{theorem} \label{T02} Let $\omega_0$ be as in  Theorem~\ref{T01}. Denote by $u^\nu$ the solution of the Navier-Stokes equation \eqref{8ThswELzXU3X7Ebd1KdZ7v1rN3GiirRXGKWK099ovBM0FDJCvkopYNQ2a:01}--\eqref{8ThswELzXU3X7Ebd1KdZ7v1rN3GiirRXGKWK099ovBM0FDJCvkopYNQ2a:03b} with viscosity $\nu>0$, defined on $[0,T]$, where $T$ is as given in Theorem~\ref{T01}. Also, denote by $\bar u$ the solution of the Euler equations with initial datum $\omega_0$. Then we have \begin{align} \lim_{\nu \to 0} \sup_{t \in [0,T]} \norm{u^\nu(t) - \bar u(t)}_{L^2(\HH)} = 0  \,.   \notag \end{align} \end{theorem}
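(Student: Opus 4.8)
The plan is to deduce Theorem~\ref{T02} from the uniform bound of Theorem~\ref{T01} by verifying Kato's criterion from~\cite{Kato84b}. First I would record that Theorem~\ref{T01} produces, uniformly in $\nu\in(0,1]$, a Navier--Stokes solution $u^\nu=\curl(-\Delta)^{-1}\omega^\nu$ on $[0,T]$ with $\sup_{t\in[0,T]}\NORM{\omega^\nu(t)}_t\le CM$, and that the $\nu=0$ version of the same fixed-point scheme (or a direct passage to the limit) furnishes a regular Euler solution $\bar u$ with the same initial datum $\omega_0$, analytic near $\partial\HH$ and Sobolev in the interior, on the same interval $[0,T]$. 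Since $u^\nu|_{t=0}=\bar u|_{t=0}=u_0$ and $\bar u$ is a strong Euler solution, Kato's theorem reduces the claim to showing
\begin{align}
\lim_{\nu\to0}\int_0^T\!\!\int_{\{z\le c\nu\}}\nu|\nabla u^\nu|^2\,dx\,dt=0
\notag
\end{align}
for some fixed $c>0$.

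The second step is to convert the weighted analytic control of the vorticity into a pointwise bound on $\nabla u^\nu$ inside the Kato layer. From the definition of $\bar\xx_\mu$ and the weight $\ww(z)=\max(\sqrtnu,z)$, summing the Fourier series in the horizontal variables yields $\norm{\omega_h^\nu(\cdot,\cdot,z)}_{L^\infty_{x,y}}\les M/\ww(z)$, so that $|\omega_h^\nu|\les M/\sqrtnu$ on $\{z\le\sqrtnu\}$, while the $\xx_\mu$ bound gives $|\omega_3^\nu|\les M$. Writing the normal derivatives through the vorticity, $\partial_3u_1=\omega_2+\partial_1u_3$ and $\partial_3u_2=\partial_2u_3-\omega_1$, and using incompressibility $\partial_3u_3=-\partial_1u_1-\partial_2u_2$ together with the fact that $u^\nu$ and its horizontal derivatives are $\OO(1)$ (by the $\YY$ and $\ZZZ$ control and the Biot--Savart law of Theorem~\ref{T01}), I would conclude that the gradient is dominated by the normal derivative of the horizontal velocity, giving the pointwise bound $|\nabla u^\nu(x,y,z)|\les M/\sqrtnu$ on the strip $\{z\le\sqrtnu\}$.

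Combining these, on the Kato layer $\{z\le c\nu\}\subset\{z\le\sqrtnu\}$ the integrand obeys $\nu|\nabla u^\nu|^2\les\nu\cdot M^2/\nu=M^2=\OO(1)$ pointwise, while the Lebesgue measure of $\{(x,y,z)\in\HH:z\le c\nu\}$ is $|\TT|^2c\nu=\OO(\nu)$. Hence
\begin{align}
\int_0^T\!\!\int_{\{z\le c\nu\}}\nu|\nabla u^\nu|^2\,dx\,dt\les T\,M^2\,\nu\longrightarrow0
\notag
\end{align}
as $\nu\to0$, which is precisely Kato's condition; the theorem follows.

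The point that makes this work --- and the step I would treat most carefully --- is the separation of scales: the vorticity concentrates in a layer of width $\sqrtnu$, where it is only of size $\OO(1/\sqrtnu)$, whereas the Kato layer has the much smaller width $c\nu$, so the product of the pointwise size of $\nu|\nabla u^\nu|^2$ and the layer volume is $\OO(\nu)$ rather than $\OO(1)$. The main technical obstacle is therefore not the final integration but the uniform pointwise gradient bound near the boundary: one must ensure that every component of $\nabla u^\nu$ other than $\partial_3u_h$ stays $\OO(1)$, which relies on the horizontal analyticity encoded in $\YY$, the interior regularity encoded in $\ZZZ$, and the recovery of $u^\nu$ from $\omega^\nu$ through the Biot--Savart law up to the boundary. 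A secondary point is to justify the regularity of the limiting Euler flow $\bar u$ needed to invoke Kato's criterion, which follows from Theorem~\ref{T01} by letting $\nu\to0$.
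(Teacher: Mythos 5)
Your argument is correct and is essentially the one the paper intends: the paper omits the proof of Theorem~\ref{T02}, deferring to the 2D case in \cite{KukVicWan19}, where the deduction from the uniform bound of Theorem~\ref{T01} proceeds exactly as you propose, namely by verifying Kato's criterion \cite{Kato84b}. The key points of your sketch --- the weighted bound $|\omega_h^\nu|\les M/w(z)\les M/\sqrtnu$ coming from the $\bar\xx_\mu$ part of the norm, the observation that every component of $\nabla u^\nu$ other than $\partial_z u_h^\nu=\pm\omega_h^\nu+\nabla_h u_3^\nu$ is $\OO(1)$ by the horizontal analyticity and Sobolev control, and the resulting $\OO(\nu)$ bound for the Kato integral over the layer $\{z\le c\nu\}$ --- match that argument.
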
 \colb \ghoahbkseibhseir \ghoahbkseibhseir \section{Estimates for the nonlinearity}  \label{sec-N} In this section, we estimate $ D^{\aa}N(s)$ with $0\le |\aa|\le 1$ in the $X_\mu$ norm and  $ D^{\aa}(1+ |\nabla_h|)N(s)$ with $0\le |\aa|\le 1$ in the $\YY_\mu$ and $S_\mu$ norms, which are among the main differences with the 2D case.  A slight modification of ~\cite[Lemma~2.4]{Maekawa14} implies the following representation formulas of the velocity field in terms of the  vorticity. \ghoahbkseibhseir \cole \begin{lemma} \label{der:str} The following representations hold   \begin{align}   \begin{split}   (\partial_{i}(-\Delta)^{-1}\omega)_{\xi}(s, z)   =&    \frac{\ii}{2}\frac{\xi_i}{|\xi|}\Big(\int_0^z   e^{-|\xi|(z-\zz)}(1-e^{-2|\xi| \zz})\omega_\xi(s, \zz) \,d\zz   \\&\indeq +  \int_z^\infty e^{-|\xi|(\zz-z)}(1-e^{-2|\xi|   z})\omega_\xi(s, \zz) \,d\zz\Big)   \end{split}   \label{8ThswELzXU3X7Ebd1KdZ7v1rN3GiirRXGKWK099ovBM0FDJCvkopYNQ2a124}   \end{align} for $i=1, 2$ and   \begin{align}   \begin{split}   (\partial_{3}(-\Delta)^{-1}\omega)_{\xi}(z) =& \frac{1}{2}\Big(-\int_0^z e^{-|\xi|(z-\zz)}(1-e^{-2|\xi| \zz})\omega_\xi(s, \zz) \,d\zz    \\&\indeq +    \int_z^\infty e^{-|\xi|(\zz-z)}(1+e^{-2|\xi|   z})\omega_\xi(s, \zz) \,d\zz\Big)   \end{split}   \label{8ThswELzXU3X7Ebd1KdZ7v1rN3GiirRXGKWK099ovBM0FDJCvkopYNQ2a123}   \end{align} where $\ii$ is the imaginary unit. \end{lemma}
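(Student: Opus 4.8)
The plan is to reduce the Biot--Savart law to a one-dimensional Green's function computation in the horizontal Fourier variable $\xi$ and then read off both formulas by differentiation. Writing $W=(-\Delta)^{-1}\omega$ and taking the horizontal Fourier transform, the identity $-\Delta W=\omega$ becomes the ODE
\[
(|\xi|^2-\partial_z^2)\,W_\xi(z)=\omega_\xi(z),\qquad z>0,
\]
to be solved with the homogeneous Dirichlet condition $W_\xi(0)=0$ (recall $(-\Delta)^{-1}$ is taken with Dirichlet data at $z=0$, consistent with $u=\curl W$) and decay as $z\to\infty$. The free fundamental solution of $|\xi|^2-\partial_z^2$ on the line is $\frac{1}{2|\xi|}e^{-|\xi||z-\zz|}$, and enforcing $W_\xi(0)=0$ by the reflection (image) method gives the half-space kernel
\[
\mathcal{G}_\xi(z,\zz)=\frac{1}{2|\xi|}\Bigl(e^{-|\xi||z-\zz|}-e^{-|\xi|(z+\zz)}\Bigr),
\]
which indeed vanishes at $z=0$, so that $W_\xi(z)=\int_0^\infty \mathcal{G}_\xi(z,\zz)\,\omega_\xi(\zz)\,d\zz$. (The scalar operator $(-\Delta)^{-1}$ acts componentwise on $\omega$, so the same kernel applies to each component.)

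For the horizontal components $i=1,2$, I would use that $\partial_i$ acts as multiplication by $\ii\xi_i$ on the horizontal Fourier side, so that
\[
(\partial_i(-\Delta)^{-1}\omega)_\xi=\ii\xi_i W_\xi=\frac{\ii}{2}\frac{\xi_i}{|\xi|}\int_0^\infty\Bigl(e^{-|\xi||z-\zz|}-e^{-|\xi|(z+\zz)}\Bigr)\omega_\xi(\zz)\,d\zz.
\]
Splitting the integral at $\zz=z$ to resolve $|z-\zz|$ and factoring the exponentials as $e^{-|\xi|(z-\zz)}-e^{-|\xi|(z+\zz)}=e^{-|\xi|(z-\zz)}(1-e^{-2|\xi|\zz})$ on $\{\zz<z\}$ and $e^{-|\xi|(\zz-z)}-e^{-|\xi|(z+\zz)}=e^{-|\xi|(\zz-z)}(1-e^{-2|\xi|z})$ on $\{\zz>z\}$ produces exactly \eqref{8ThswELzXU3X7Ebd1KdZ7v1rN3GiirRXGKWK099ovBM0FDJCvkopYNQ2a124}.

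For the vertical component I would differentiate $W_\xi(z)=\frac{1}{2|\xi|}\bigl(\int_0^z e^{-|\xi|(z-\zz)}\omega_\xi\,d\zz+\int_z^\infty e^{-|\xi|(\zz-z)}\omega_\xi\,d\zz-\int_0^\infty e^{-|\xi|(z+\zz)}\omega_\xi\,d\zz\bigr)$ directly in $z$. Two kinds of terms appear: boundary contributions from the $z$-dependent limits and derivatives of the integrands. The two boundary terms, $+\frac{1}{2|\xi|}\omega_\xi(z)$ from the lower limit and $-\frac{1}{2|\xi|}\omega_\xi(z)$ from the upper limit, cancel (equivalently, this reflects the continuity of $\mathcal{G}_\xi$ across the diagonal $\zz=z$: it is the jump of $\partial_z\mathcal{G}_\xi$, not of $\mathcal{G}_\xi$, that reproduces the delta), while the image integral has fixed limits and contributes no boundary term. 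Using $\partial_z e^{-|\xi|(z-\zz)}=-|\xi|e^{-|\xi|(z-\zz)}$ on $\{\zz<z\}$, $\partial_z e^{-|\xi|(\zz-z)}=+|\xi|e^{-|\xi|(\zz-z)}$ on $\{\zz>z\}$, and $\partial_z e^{-|\xi|(z+\zz)}=-|\xi|e^{-|\xi|(z+\zz)}$, the factor $\frac{1}{2|\xi|}$ cancels the $|\xi|$, the lower-region integral acquires an overall minus sign, and the image term—entering $W_\xi$ with a minus sign—reappears in $\partial_z W_\xi$ with a plus sign; redistributing it across the two regions yields the factors $1-e^{-2|\xi|\zz}$ below the diagonal and $1+e^{-2|\xi|z}$ above it, which is precisely \eqref{8ThswELzXU3X7Ebd1KdZ7v1rN3GiirRXGKWK099ovBM0FDJCvkopYNQ2a123}.

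I do not expect a genuine obstacle here; the statement is a Green's function bookkeeping exercise adapting \cite[Lemma~2.4]{Maekawa14} to the present three-dimensional, componentwise setting. The only step that requires care is the differentiation under the integral with $z$-dependent limits in the vertical case: one must check that the two diagonal boundary terms cancel, since otherwise a spurious local multiple of $\omega_\xi(z)$ would contaminate the formula. I would also note in passing that interchanging $\partial_z$ with the integral, together with convergence of the $\int_z^\infty$ piece, is justified by the decay and regularity of $\omega_\xi$ afforded by the norms introduced in Section~\ref{sec03}.
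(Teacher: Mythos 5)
Your proposal is correct and is exactly the standard derivation that the paper's one-line citation of \cite[Lemma~2.4]{Maekawa14} refers to: the paper offers no proof of its own, and your computation — the Dirichlet Green's function $\frac{1}{2|\xi|}\bigl(e^{-|\xi||z-\zz|}-e^{-|\xi|(z+\zz)}\bigr)$ for $|\xi|^2-\partial_z^2$ on the half-line, multiplication by $\ii\xi_i$ for $i=1,2$, and differentiation in $z$ with the cancelling diagonal boundary terms for $i=3$ — reproduces both stated formulas and is consistent with the Dirichlet choice (both right-hand sides vanish at $z=0$ for the horizontal derivatives). No gaps.
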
 \colb   \ghoahbkseibhseir As in Remark~\ref{R03} above, the Biot-Savart law   of Lemma~\ref{bio:sav} also holds for $z$ in the complex domain   $\Omega_\mu \cup [1+\mu,\infty)$. If $z \in \partial \Omega_\theta$   for some $\theta \in [0,\mu)$, and say $\Im z \geq 0$, then the    integration from $0$ to $z$ in   \eqref{8ThswELzXU3X7Ebd1KdZ7v1rN3GiirRXGKWK099ovBM0FDJCvkopYNQ2a123}--\eqref{8ThswELzXU3X7Ebd1KdZ7v1rN3GiirRXGKWK099ovBM0FDJCvkopYNQ2a124}   is an integration over the complex line $\partial \Omega_\theta \cap   \{\bar z\colon \Im \bar z \geq 0, \Re \bar z \leq \Re z \}$, while   the integration from $z$ to $\infty$ is an integration over $(   \partial \Omega_\theta  \cap \{\bar z\colon \Im\bar z \geq 0,   \Re\bar z \leq \Re z \leq 1+\theta \}) \cup [1+\theta, \infty)$.  \ghoahbkseibhseir The main estimate concerning the $X_\mu$ norm is the following.  \cole \begin{lemma} \label{L01} Let $\mu \in (0, \mu_0 - \gamma s)$ be arbitrary.  We have the inequalities   \begin{align}   \lVert N(s)\rVert_{X_{\mu}}  &\les   \bigl(\lVert (1+|\nabla_h|^2)\omega\rVert_{\YY_{\mu}}+\lVert (1+ |\nabla_h|^2)\omega\rVert_{S_{\mu}} + \lVert \omega\rVert_{X_{\mu}}\bigr)   \lVert \omega\rVert_{X_{\mu}}   \nn&\indeq   +   \bigl(\lVert (1+|\nabla_h|)\omega\rVert_{\YY_{\mu}}+\lVert (1+   |\nabla_h|)\omega\rVert_{S_{\mu}} \bigr)   \lVert D    \omega\rVert_{X_{\mu}}    \,  ,   \label{8ThswELzXU3X7Ebd1KdZ7v1rN3GiirRXGKWK099ovBM0FDJCvkopYNQ2a128}   \end{align} and   \begin{align}   \sum_{|\aa|=1}&\lVert   D^{\aa}N(s)\rVert_{X_{\mu}} \lesssim    \lVert   \omega\rVert_{X_{\mu}} \sum_{|\aa|=1} \left(      \lVert (1+ |\nabla_h|^{|\alpha_h|+2})\omega\rVert_{\YY_{\mu}}        +\lVert (1 + |\nabla_h|^{|\alpha_h|+2})\omega\rVert_{S_{\mu}}        + \norm{D^{\aa}\omega}_{X_\mu}      \right) \nn&\indeq +        \sum_{|\aa|=1}\lVert D^{\aa} \omega \rVert_{X_{\mu}} \left(        \lVert (1+ |\nabla_h|^{2}) \omega\rVert_{\YY_{\mu}}     +\lVert        (1+ |\nabla_h|^{2}) \omega\rVert_{S_{\mu}}      + \alpha_3        (\norm{z\partial_{z})^{\alpha_3}\omega}_{X_\mu}     \right)        \notag  \\&\indeq    +   \sum_{|\aa|=2}\lVert D^{\aa} \omega \rVert_{X_{\mu}} \left(   \lVert (1+ |\nabla_h|) \omega\rVert_{\YY_{\mu}}     +\lVert (1+   |\nabla_h|) \omega\rVert_{S_{\mu}}      \right) .   \label{8ThswELzXU3X7Ebd1KdZ7v1rN3GiirRXGKWK099ovBM0FDJCvkopYNQ2a127}    \end{align} \end{lemma}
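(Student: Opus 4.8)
The strategy is to expand $N=\omega\cdot\nabla u-u\cdot\nabla\omega$ into a finite sum of products, each of the schematic form (velocity-type factor)$\,\times\,$(vorticity-type factor), and to estimate every such product by placing one factor in an $L^\infty_z\ell^1_\xi$ analytic norm and the other in the $X_\mu$ norm. The algebraic mechanism that makes this work is the subadditivity $|\xi|\le|\xi-\eta|+|\eta|$: a product becomes a convolution $(fg)_\xi=\sum_\eta f_{\xi-\eta}g_\eta$ in the horizontal frequency, so the analytic weight factors as $e^{\ee(1+\mu-z)_+|\xi|}\le e^{\ee(1+\mu-z)_+|\xi-\eta|}e^{\ee(1+\mu-z)_+|\eta|}$. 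Taking the supremum in $z$ of the product of the two bracketed factors and summing in $\xi$ yields a product inequality of the type
\[
\|fg\|_{\bar\xx_\mu}\les\Bigl(\sum_{\xi}\sup_{z}e^{\ee(1+\mu-z)_+|\xi|}|f_\xi(z)|\Bigr)\,\|g\|_{\bar\xx_\mu},
\]
together with its analogue for the $\xx_\mu$ norm (and the two cases of horizontal versus vertical output component, which carry the weight $w$ or not). It then remains to bound each velocity-type factor in the $L^\infty_z\ell^1_\xi$ analytic norm in parentheses.

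For this I would use the Biot--Savart representation of Lemma~\ref{der:str}. The kernels $e^{-|\xi||z-\zz|}(1-e^{-2|\xi|\zz})$ are bounded by one and act as an $L^1_\zz\to L^\infty_z$ smoothing; since $\ee<1$, the weight excess $e^{\ee(\zz-z)|\xi|}$ is dominated by the kernel decay $e^{-|\xi|(\zz-z)}$, so that $\sup_z e^{\ee(1+\mu-z)_+|\xi|}|u_\xi(z)|\les\|e^{\ee(1+\mu-\zz)_+|\xi|}\omega_\xi\|_{L^1_\zz}$ over the analytic contour, which sums to $\|\omega\|_{\YY_\mu}$. In the far field $z\ge1+\mu$, where only $L^2$ control is available, the same integral is estimated by Cauchy--Schwarz, producing a factor $|\xi|^{-1/2}\|\omega_\xi\|_{L^2}$; summing this over the \emph{two}-dimensional lattice $\xi\in\ZZ^2$ requires extra negative powers of $|\xi|$, and it is exactly here that the horizontal gains $(1+|\nabla_h|)$ and $(1+|\nabla_h|^2)$ enter, pairing against the $S_\mu$ norm. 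This two-dimensional summability is the genuinely three-dimensional feature and accounts for the one extra horizontal derivative carried by the present $\YY_\mu$ and $S_\mu$ norms relative to the 2D case.

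The main obstacle is the vortex-stretching term $\omega\cdot\nabla u$, and within it the vertical derivative $\omega\,\partial_z u_h$, which is \emph{a priori} of size $\OO(1/\nu)$ near the boundary and therefore not absorbed by the weight $w=\max(\sqrtnu,z)$, which only compensates a single factor of $1/\sqrtnu$. To defeat this I would use the identity $\partial_z^2 W=\Delta W-\Delta_h W=-\omega+|\nabla_h|^2 W$ with $W=(-\Delta)^{-1}\omega$, which rewrites $\partial_z u_1=\omega_2+\partial_2\partial_z W_3-|\nabla_h|^2W_2$, and symmetrically for $\partial_z u_2$. The terms other than $\pm\omega_h$ carry horizontal derivatives of the Newtonian potential and are non-singular (of order $\OO(1)$) by Lemma~\ref{der:str}, so the only dangerous residue is the purely vortical product $\omega_3\,\omega_h$. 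Here the splitting of the norm is decisive: $\omega_3$ is measured in the \emph{unweighted} $\xx_\mu$ norm and is $\OO(1)$, so that $w(z)|\omega_3\omega_h|\les\|\omega_3\|_{\xx_\mu}\|\omega_h\|_{\bar\xx_\mu}\les\|\omega\|_{X_\mu}^2$. This is precisely the origin of the $\|\omega\|_{X_\mu}\|\omega\|_{X_\mu}$ and $\|\omega\|_{X_\mu}\|D\omega\|_{X_\mu}$ terms on the right of~\eqref{8ThswELzXU3X7Ebd1KdZ7v1rN3GiirRXGKWK099ovBM0FDJCvkopYNQ2a128}. For the transport term $u\cdot\nabla\omega$ the velocity is placed in the $L^\infty_z\ell^1_\xi$ norm as above; the vertical piece $u_3\partial_z\omega$ uses that $u_3$ vanishes at $z=0$, so that $u_3/z$ is a bounded velocity factor (carrying at most one horizontal derivative, since $1-e^{-2|\xi|z}\le 2|\xi|z$), whence $u_3\partial_z\omega=(u_3/z)\,(z\partial_z\omega)$ converts the plain $z$-derivative into the conormal derivative $D\omega$ appearing in~\eqref{8ThswELzXU3X7Ebd1KdZ7v1rN3GiirRXGKWK099ovBM0FDJCvkopYNQ2a128}.

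Finally, the first-order bound~\eqref{8ThswELzXU3X7Ebd1KdZ7v1rN3GiirRXGKWK099ovBM0FDJCvkopYNQ2a127} is obtained by applying a single conormal derivative $D^\alpha$, $|\alpha|=1$, expanding by the Leibniz rule, and repeating the three steps above with the derivative distributed across the factors. When $D^\alpha$ falls on a Biot--Savart factor, the derivative is taken directly in the explicit kernels of Lemma~\ref{der:str}: a horizontal derivative contributes a factor $\xi_i$ and $z\partial_z$ contributes a factor $z|\xi|$, both of which are reabsorbed at the cost of one further horizontal derivative on the vorticity, which is why the exponents are shifted to $|\nabla_h|^{|\alpha_h|+2}$. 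The number of horizontal derivatives (one or two) in each term is dictated by the two-dimensional summability of the preceding paragraph together with the order of the derivative that has already been moved onto the companion factor. I expect the principal difficulty to lie not in any single estimate but in the systematic bookkeeping of the weight $w$ and of the horizontal-derivative counts, so that every product closes within the $X_\mu$, $\YY_\mu$, and $S_\mu$ norms simultaneously.
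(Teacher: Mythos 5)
Your proposal is correct and follows essentially the same route as the paper: the decomposition of $N$ into velocity--vorticity products controlled via the convolution inequality for the analytic weight, the Biot--Savart bounds of Lemma~\ref{der:str} for the velocity factors, the observation that $\partial_z u_h$ equals $\pm\omega_h$ plus terms costing one extra horizontal derivative on $\omega$ (so the dangerous product reduces to $\omega_3\omega_h$ with $\omega_3$ measured in the unweighted $\xx_\mu$ norm), and the factorization $u_3\partial_z\omega=(u_3/z)(z\partial_z\omega)$ with $|1-e^{-2|\xi|\zz}|/z\les|\xi|$ are precisely the content of the paper's Lemmas~\ref{L14} and~\ref{vel:ver}. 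The one inaccuracy is your account of the far-field term: the $S_\mu$ norm used here is already $\ell^1$ in $\xi$ with weight $z$, so the Cauchy--Schwarz is taken in $z$ against $1/z$ and no negative powers of $|\xi|$ are needed for summability over $\ZZ^2$; the extra horizontal derivatives $(1+|\nabla_h|)$, $(1+|\nabla_h|^2)$ instead originate in the velocity representation itself (e.g.\ $\nabla_h u_3/z$ costs two horizontal derivatives of $\omega$), which is how the paper's bookkeeping produces them.
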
 \colb \ghoahbkseibhseir We first recall that   $  N_\xi=(\omega\cdot\nabla u -u\cdot\nabla\omega)_\xi(s, z)$. Since the horizontal and vertical components of the vorticity behave differently, we need to consider   them separately. 
This leads to the decomposition
  \begin{align}
  N_\xi=&\left((\omega_h\cdot\nabla_h u_h)_\xi, (\omega_h\cdot\nabla_h u_3)_\xi \right)
  +\left((\omega_3\partial_z u_h)_\xi,  (\omega_3\partial_z u_3)_\xi \right)
    \nn&\quad
  -\left((u_h\cdot\nabla_{h}\omega_h)_\xi , (u_h\cdot\nabla_{h}\omega_3)_\xi \right)
  -\left(\left(\frac{u_3}{z}z\partial_{z}\omega_h\right)_\xi
  ,\left(\frac{u_3}{z}z\partial_{z}\omega_3\right)_\xi\right).
 \label{8ThswELzXU3X7Ebd1KdZ7v1rN3GiirRXGKWK099ovBM0FDJCvkopYNQ2a129} 
  \end{align}
  Thus, in order to prove the lemma, we have to estimate   the above velocity terms. These inequalities are collected in the   next two lemmas. \ghoahbkseibhseir \cole \begin{lemma} \label{L14} Let $\mu \in (0, \mu_0 - \gamma s)$. For the velocity field $u$ and its derivatives, we have \begin{align} \sum_{\xi}\sup_{z\in\Omega_{\mu}}e^{\ee(1+\mu-z)_+|\xi|}w^{\alpha_3}(z)|(  \nabla^{\alpha} u)_\xi|   \lesssim   \lVert|\nabla_h|^{|\alpha|} \omega\rVert_{\YY_{\mu}}+\lVert|\nabla_h|^{|\alpha|} \omega\rVert_{S_{\mu}} + \alpha_3 \norm{\omega}_{X_\mu} , \label{8ThswELzXU3X7Ebd1KdZ7v1rN3GiirRXGKWK099ovBM0FDJCvkopYNQ2a:X:mu:u1} \,   \end{align} \begin{align}   \left\lVert  D^{\alpha} u  \right \rVert_{\xx_{\mu}}     \lesssim   \lVert |\nabla_h|^{|\alpha|}   \omega\rVert_{\YY_{\mu}}+\lVert|\nabla_h|^{|\alpha|}   \omega\rVert_{S_{\mu}} + \alpha_3 \norm{\omega}_{X_\mu} ,   \label{8ThswELzXU3X7Ebd1KdZ7v1rN3GiirRXGKWK099ovBM0FDJCvkopYNQ2a:X:mu:u7}   \,   \end{align} \begin{align}   \left\lVert  D^{\alpha} \nabla_h u   \right \rVert_{\xx_{\mu}}    \lesssim   \lVert|\nabla_h|^{|\alpha|+1}   \omega\rVert_{\YY_{\mu}}+\lVert|\nabla_h|^{|\alpha|+1}   \omega\rVert_{S_{\mu}} +    \alpha_3 \norm{   |\nabla_h|\omega}_{X_\mu},  \label{8ThswELzXU3X7Ebd1KdZ7v1rN3GiirRXGKWK099ovBM0FDJCvkopYNQ2a:X:mu:u3}     \,  \end{align} and  \begin{align}    \sum_{\xi}\sup_{z\in\Omega_{\mu}}e^{\ee(1+\mu-z)_+|\xi|}w^{\alpha_3}(z)|(    D^{\alpha} \partial_{z} u)_\xi|   \lesssim    \lVert|\nabla_h|^{|\alpha|+1}    \omega\rVert_{\YY_{\mu}}+\lVert|\nabla_h|^{|\alpha|+1}    \omega\rVert_{S_{\mu}} + \alpha_3    (\norm{z\partial_{z})^{\alpha_3}\omega}_{X_\mu}    \label{8ThswELzXU3X7Ebd1KdZ7v1rN3GiirRXGKWK099ovBM0FDJCvkopYNQ2a:X:mu:u6}    \,  \end{align} for $0 \leq |\alpha| \leq 1$. \end{lemma}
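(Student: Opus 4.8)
The plan is to start from the Biot--Savart law \eqref{bio:sav}, $u=\curl W$ with $W=(-\Delta)^{-1}\omega$, so that each component of $u$, and likewise of $\nabla_h u$ and $\partial_z u$, is a fixed linear combination of the elementary blocks $\partial_i(-\Delta)^{-1}\omega_j$. For $i=1,2$ these blocks are given explicitly by \eqref{8ThswELzXU3X7Ebd1KdZ7v1rN3GiirRXGKWK099ovBM0FDJCvkopYNQ2a124} and for $i=3$ by \eqref{8ThswELzXU3X7Ebd1KdZ7v1rN3GiirRXGKWK099ovBM0FDJCvkopYNQ2a123}, so the whole lemma reduces to estimating the two integral operators appearing there. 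The horizontal factors $\xi_i/|\xi|$ are bounded Fourier multipliers and are thus harmless, while the horizontal conormal derivatives $\partial_x^{\alpha_1}\partial_y^{\alpha_2}$ act as multiplication by $(\ii\xi_1)^{\alpha_1}(\ii\xi_2)^{\alpha_2}$; the latter is precisely what upgrades $\omega$ to $|\nabla_h|^{|\alpha_h|}\omega$ on the right-hand sides of \eqref{8ThswELzXU3X7Ebd1KdZ7v1rN3GiirRXGKWK099ovBM0FDJCvkopYNQ2a:X:mu:u1}--\eqref{8ThswELzXU3X7Ebd1KdZ7v1rN3GiirRXGKWK099ovBM0FDJCvkopYNQ2a:X:mu:u6}, so it suffices to treat $\alpha_h=0$ and carry the frequency weights along.

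The core of the argument is the bound, uniform in $z\in\Omega_\mu$ and after summation in $\xi$, of expressions of the form $e^{\ee(1+\mu-z)_+|\xi|}\int_0^\infty e^{-|\xi||z-\zz|}|\omega_\xi(\zz)|\,d\zz$. First I would transfer the analytic weight into the integral via the pointwise estimate $e^{\ee(1+\mu-z)_+|\xi|}e^{-|\xi||z-\zz|}\les e^{\ee(1+\mu-\zz)_+|\xi|}$, which holds precisely because $\ee<1$ makes the slope of the analytic exponent strictly smaller than that of the Biot--Savart kernel. Then I would split the $\zz$-integral at $\zz=1+\mu$: on the analytic part $\zz\le 1+\mu$ the weighted integrand is controlled, after summing in $\xi$, by $\norm{\omega}_{\YY_\mu}$, the $\SL_\mu=\sup_\theta L^1(\partial\Omega_\theta)$ structure being designed exactly for this; on the bulk part $\zz\ge 1+\mu$ the analytic weight is trivial and the exponential decay of the kernel allows a Cauchy--Schwarz step landing on the weighted $L^2$ norm $\norm{\omega}_{S_\mu}$. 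Near the boundary the factor $(1-e^{-2|\xi|\zz})\les\min(1,|\xi|\zz)$ in \eqref{8ThswELzXU3X7Ebd1KdZ7v1rN3GiirRXGKWK099ovBM0FDJCvkopYNQ2a124} supplies the extra vanishing that matches the spatial weight $w(z)=\max(\sqrtnu,z)$ in the $\bar\xx_\mu$ part and keeps all constants independent of $\mu$.

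The genuinely vertical terms arise in $\partial_z u$ and $D^\alpha\partial_z u$, where a second vertical derivative hits the Newtonian potential. Here, rather than differentiate the kernel twice, I would use the exact operator identity $\partial_z^2(-\Delta_\xi)^{-1}=|\xi|^2(-\Delta_\xi)^{-1}-\mathrm{Id}$, which follows from $-\Delta_\xi=|\xi|^2-\partial_z^2$ together with the Dirichlet condition. The identity piece returns a pointwise copy of $\omega_\xi(z)$, which is not integrated and so cannot be absorbed by $\YY_\mu$ or $S_\mu$; weighted by $w^{\alpha_3}(z)$ and summed, this is exactly the term yielding $\alpha_3\norm{\omega}_{X_\mu}$ in \eqref{8ThswELzXU3X7Ebd1KdZ7v1rN3GiirRXGKWK099ovBM0FDJCvkopYNQ2a:X:mu:u1}--\eqref{8ThswELzXU3X7Ebd1KdZ7v1rN3GiirRXGKWK099ovBM0FDJCvkopYNQ2a:X:mu:u7} and, after the additional conormal derivative, $\alpha_3\norm{(z\partial_z)^{\alpha_3}\omega}_{X_\mu}$ in \eqref{8ThswELzXU3X7Ebd1KdZ7v1rN3GiirRXGKWK099ovBM0FDJCvkopYNQ2a:X:mu:u6}. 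The remaining piece $|\xi|^2(-\Delta_\xi)^{-1}\omega$ carries one extra power of $|\xi|$ and is handled exactly as above, accounting for the gain to $|\nabla_h|^{|\alpha|+1}$. Note that since $|\alpha|\le1$, the case $\alpha_3=1$ forces $\alpha_h=0$, so no horizontal frequency accompanies the pointwise term and $\norm{\omega}_{X_\mu}$ (rather than $\norm{|\nabla_h|\omega}_{X_\mu}$) is the right bound; moreover $u_3$, built only from \eqref{8ThswELzXU3X7Ebd1KdZ7v1rN3GiirRXGKWK099ovBM0FDJCvkopYNQ2a124}, produces no such correction.

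The main obstacle I anticipate is the bookkeeping of the conormal derivative $z\partial_z$ against the boundary structure of the integral operators and the spatial weight. Differentiating the split integrals in \eqref{8ThswELzXU3X7Ebd1KdZ7v1rN3GiirRXGKWK099ovBM0FDJCvkopYNQ2a123}--\eqref{8ThswELzXU3X7Ebd1KdZ7v1rN3GiirRXGKWK099ovBM0FDJCvkopYNQ2a124} in $z$ creates boundary contributions at $\zz=z$ from the two integration limits; one must check that for the horizontal block these cancel, while for the vertical block they combine into the single pointwise term isolated above (this can be verified either directly, or through the operator identity, which gives the identical $-\omega_\xi(z)$). Simultaneously the weight $w(z)=\max(\sqrtnu,z)$ must be matched: near the boundary $w(z)=\sqrtnu$ is small, and one has to confirm that the $|\xi|$-gain and the $(1-e^{-2|\xi|\zz})$ vanishing together close the weighted estimate without losing a power of $1/\sqrtnu$. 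Keeping these pointwise boundary terms honest, and confirming that they are controlled by the $L^\infty$-based norm $X_\mu$ and nothing stronger, is the delicate $3$D point, absent in the $2$D scalar setting.
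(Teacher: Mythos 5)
Your proposal is correct and takes essentially the same route as the paper: the representation formulas of Lemma~\ref{der:str}, the weight transfer $e^{\ee(1+\mu-z)_+|\xi|}e^{-|\xi||z-\zz|}\les e^{\ee(1+\mu-\zz)_+|\xi|}$, the splitting of the $\zz$-integral at $1+\mu$ into an $\SL_\mu$ piece and a weighted-$L^2$ tail, horizontal derivatives as Fourier multipliers, and the identification of the pointwise $-\omega_\xi(z)$ (resp.\ $-z\partial_z\omega_\xi(z)$) term as the sole source of the $\alpha_3\norm{\cdot}_{X_\mu}$ corrections. The only cosmetic difference is that you produce that pointwise term via the identity $\partial_z^2(-\Delta_\xi)^{-1}=|\xi|^2(-\Delta_\xi)^{-1}-\mathrm{Id}$, whereas the paper differentiates the explicit kernels in \eqref{8ThswELzXU3X7Ebd1KdZ7v1rN3GiirRXGKWK099ovBM0FDJCvkopYNQ2a1:der:y:u1} and \eqref{der:y:2:u1} directly and reads off the uncancelled boundary contribution.
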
 \colb \ghoahbkseibhseir \ghoahbkseibhseir \ghoahbkseibhseir \begin{proof}[Proof of Lemma~\ref{L14}] We first prove \eqref{8ThswELzXU3X7Ebd1KdZ7v1rN3GiirRXGKWK099ovBM0FDJCvkopYNQ2a:X:mu:u1} for $\alpha = (0, 0,0)$. From~\eqref{bio:sav}, we have \begin{equation}   \label{for:u1}   u_1 = \partial_{2}(-\Delta)^{-1}\omega_3 - \partial_{3}(-\Delta)^{-1}\omega_2.   \end{equation} In view of  \eqref{8ThswELzXU3X7Ebd1KdZ7v1rN3GiirRXGKWK099ovBM0FDJCvkopYNQ2a124}, we decompose the first term as   \begin{align}  \notag   (\partial_{2}(-\Delta)^{-1}\omega_3)_{\xi}(s, z)   =& \frac{\ii}{2}\frac{\xi_2}{|\xi|}\Big(\int_0^z e^{-|\xi|(z-\zz)}(1-e^{-2|\xi| \zz})\omega_{3,\xi}(s, \zz) \,d\zz \\&\indeq+   \left(\int_z^{1+\mu}+\int_{1+\mu}^\infty\right) e^{-|\xi|(\zz-z)}(1-e^{-2|\xi| z})\omega_{3,\xi}(s, \zz) \,d\zz\Big) \notag \\=&  I_{1}+I_{2}+I_{3}    \, .    \notag   \end{align} Using the  triangle inequality, we have   \begin{align}   &e^{\ee(1+\mu-z)_+|\xi|}e^{- |z-\zz||\xi|}    \leq   e^{\ee(1+\mu-\zz)_+|\xi|} e^{\ee(\zz-z)_+|\xi|}   e^{- |z-\zz||\xi|}   \le   e^{\ee(1+\mu-\zz)_+|\xi|}e^{-(1-\ee)|\zz-z||\xi|}   \label{8ThswELzXU3X7Ebd1KdZ7v1rN3GiirRXGKWK099ovBM0FDJCvkopYNQ2a133}   \end{align} provided $\epsilon_0\leq 1$. Thus, we further obtain   \begin{align}   e^{\ee(1+\mu-z)_+|\xi|}    (|I_1|+|I_2|)   \les    \int_{0}^{1+\mu}   e^{\ee(1+\mu-z)_+|\xi|}        |\omega_{3,\xi}(s,z)|        \,dz   \les         \lVert e^{\ee(1+\mu-z)_+|\xi|}\omega_{3,    \xi}\rVert_{\SL_{\mu}}    \, .   \label{8ThswELzXU3X7Ebd1KdZ7v1rN3GiirRXGKWK099ovBM0FDJCvkopYNQ2a134}   \end{align} Next using \eqref{8ThswELzXU3X7Ebd1KdZ7v1rN3GiirRXGKWK099ovBM0FDJCvkopYNQ2a133} , we treat the term $I_3$ as   \begin{align} e^{\ee(1+\mu-z)_+|\xi|}|I_{3}|   &\les   \int^{\infty}_{1+\mu} |\omega_{3,\xi}(s, z)| \,dz   \lesssim   \lVert z \omega_{3,\xi}\rVert_{L^2(z \geq 1+\mu)}    \, . \label{8ThswELzXU3X7Ebd1KdZ7v1rN3GiirRXGKWK099ovBM0FDJCvkopYNQ2a136} \end{align} Combining the above estimates gives   \begin{align}   \label{par:2:w:3}   |e^{\ee(1+\mu-z)_+|\xi|}(\partial_{2}(-\Delta)^{-1}\omega_3)_{\xi}(s,   z)| \lesssim   \lVert e^{\ee(1+\mu-z)_+|\xi|}\omega_{3,   \xi}\rVert_{\SL_{\mu}} +    \lVert z \omega_{3,\xi}\rVert_{L^2(z   \geq 1+\mu)}.   \end{align} For the second term in~\eqref{for:u1},   we similarly rewrite the integral as   \begin{align} \notag    (\partial_{3}(-\Delta)^{-1}\omega_2)_{\xi}(z) =&   \frac{1}{2}\Big(-\int_0^z e^{-|\xi|(z-\zz)}(1-e^{-2|\xi|   \zz})\omega_{2,\xi}(s, \zz) \,d\zz    \\&\indeq+   \left(\int_z^{1+\mu}+\int_{1+\mu}^\infty\right)   e^{-|\xi|(\zz-z)}(1+e^{-2|\xi| z})\omega_{2,\xi}(s, \zz) \,d\zz\Big) \notag   \end{align} and obtain the inequality   \begin{align}   \label{par:3:w:2}   |e^{\ee(1+\mu-z)_+|\xi|}(\partial_{3}(-\Delta)^{-1}\omega_2)_{\xi}(z)|   \lesssim   \lVert e^{\ee(1+\mu-z)_+|\xi|}\omega_{2,   \xi}\rVert_{\SL_{\mu}} +    \lVert z \omega_{2,\xi}\rVert_{L^2(z   \geq 1+\mu)}.   \end{align} Summing the bounds \eqref{par:2:w:3} and   \eqref{par:3:w:2} in $\xi$,  we conclude the proof of   \eqref{8ThswELzXU3X7Ebd1KdZ7v1rN3GiirRXGKWK099ovBM0FDJCvkopYNQ2a:X:mu:u1}   for $u_1$ when $|\aa|=0$. By the same procedure, one can check that   the inequality   \eqref{8ThswELzXU3X7Ebd1KdZ7v1rN3GiirRXGKWK099ovBM0FDJCvkopYNQ2a:X:mu:u1}    holds for $u_2$ and $u_3$. \ghoahbkseibhseir The cases $\alpha =   (1,0,0), (0,1,0)$ amount to multiplying the vorticity by $\ii \xi_1$   and $\ii \xi_2$ respectively, and thus the assertion follows by the   same proof as for $\alpha = (0, 0,0)$.  For the case $\alpha=(0,0, 1)$, we apply the  $z$ derivative of  $\partial_{2}(-\Delta)^{-1}\omega_3$ to obtain   \begin{align} \partial_{z}\partial_{2}(-\Delta)^{-1}\omega_{3,\xi}=&   \frac{\ii}{2} \biggl( -\int_0^z  e^{-|\xi|(z-\zz)}(1-e^{-2|\xi| \zz}) \xi_2 \omega_{3,\xi}(s, \zz) \,d\zz  \notag  \\&\indeq +  \int_z^\infty e^{-|\xi|(\zz-z)}(1-e^{-2|\xi| z}) \xi_2 \omega_{3,\xi}(s, \zz) \,d\zz \notag   \\&\indeq + 2  \int_z^\infty  e^{-|\xi|(\zz-z)}e^{-2|\xi| z} \xi_2 \omega_{3, \xi}(s, \zz) \,d\zz\biggr)    \, .   \label{8ThswELzXU3X7Ebd1KdZ7v1rN3GiirRXGKWK099ovBM0FDJCvkopYNQ2a:der:y:u1}   \end{align} The \therere{ LlX efa eN6 v1 B6m3 Ek3J SXUIjX 8P d NKI UFN JvP Ha Vr4T eARP dXEV7B xM 0 A7w 7je p8M 4Q ahOi hEVo} same argument as in the proof   of~\eqref{8ThswELzXU3X7Ebd1KdZ7v1rN3GiirRXGKWK099ovBM0FDJCvkopYNQ2a:X:mu:u1}   for $\aa=(0, 0, 0)$ leads to   \begin{align} \notag   |e^{\ee(1+\mu-z)_+|\xi|}w(z)\partial_{z}(\partial_{2}(-\Delta)^{-1}\omega_3)_{\xi}|    \lesssim   \lVert e^{\ee(1+\mu-z)_+|\xi|}|\xi|\omega_{3,   \xi}\rVert_{\SL_{\mu}} +    \lVert z   |\xi|\omega_{3,\xi}\rVert_{L^2(z \geq 1+\mu)}.   \end{align} Note   that    \begin{align}   \begin{split}   \partial_{z}(\partial_{3}(-\Delta)^{-1}\omega_2)_{\xi}=&   \frac{1}{2}\biggl(\int_0^z |\xi|e^{-|\xi|(z-\zz)}(1-e^{-2|\xi|   \zz})\omega_{2,\xi}(s, \zz) \,d\zz    \\&\indeq+  \int_z^\infty   |\xi| e^{-|\xi|(\zz-z)}(1+e^{-2|\xi| z})\omega_{2,\xi}(s, \zz)   \,d\zz    \\&\indeq+  \int_z^\infty   (-2|\xi|)e^{-|\xi|(\zz-z)}e^{-2|\xi| z}\omega_{2,\xi}(s, \zz)   \,d\zz\biggr)    -\omega_{2,\xi}(z).   \end{split}   \label{8ThswELzXU3X7Ebd1KdZ7v1rN3GiirRXGKWK099ovBM0FDJCvkopYNQ2a1:der:y:u1}   \end{align} Following the arguments for   proving~\eqref{8ThswELzXU3X7Ebd1KdZ7v1rN3GiirRXGKWK099ovBM0FDJCvkopYNQ2a:X:mu:u1}   leads to   \begin{align} \notag   |e^{\ee(1+\mu-z)_+|\xi|}w(z)\partial_{z}(\partial_{3}(-\Delta)^{-1}\omega_2)_{\xi}|   \lesssim&   \lVert e^{\ee(1+\mu-z)_+|\xi|}|\xi|\omega_{2,    \xi}\rVert_{\SL_{\mu}} +    \lVert z   |\xi|\omega_{2,\xi}\rVert_{L^2(z \geq 1+\mu)}\nn&   +   |e^{\ee(1+\mu-z)_+|\xi|}w(z)\omega_{2,\xi}|   \nn\lesssim&   \lVert|\xi| \omega_{2,\xi}\rVert_{\YY_{\mu}}+\lVert|\xi|   \omega_{2,\xi}\rVert_{S_{\mu}} + \alpha_3   \norm{\omega_{2,\xi}}_{\bar\xx_\mu} , \notag   \end{align} which   completes the proof of   \eqref{8ThswELzXU3X7Ebd1KdZ7v1rN3GiirRXGKWK099ovBM0FDJCvkopYNQ2a:X:mu:u1}   for $u_1$ when $|\alpha|=1$. The estimates for $u_2$ and $u_3$   follow similarly.  The presence of the additional factor  $|\xi|$   causes $\omega$ to be replaced by $|\nabla_h| \omega$ in the upper   bounds.  This concludes the proof of \eqref{8ThswELzXU3X7Ebd1KdZ7v1rN3GiirRXGKWK099ovBM0FDJCvkopYNQ2a:X:mu:u1} for $\alpha = (0, 0,1)$. \ghoahbkseibhseir Note that \eqref{8ThswELzXU3X7Ebd1KdZ7v1rN3GiirRXGKWK099ovBM0FDJCvkopYNQ2a:X:mu:u3} is a direct consequence of \eqref{8ThswELzXU3X7Ebd1KdZ7v1rN3GiirRXGKWK099ovBM0FDJCvkopYNQ2a:X:mu:u7}  once replacing $u$ by $\nabla_h u$ in \eqref{8ThswELzXU3X7Ebd1KdZ7v1rN3GiirRXGKWK099ovBM0FDJCvkopYNQ2a:X:mu:u7}. Hence, we only prove~\eqref{8ThswELzXU3X7Ebd1KdZ7v1rN3GiirRXGKWK099ovBM0FDJCvkopYNQ2a:X:mu:u7}. The cases $\aa=(1, 0, 0)$ and $(0, 1, 0)$ of \eqref{8ThswELzXU3X7Ebd1KdZ7v1rN3GiirRXGKWK099ovBM0FDJCvkopYNQ2a:X:mu:u7} are already covered in \eqref{8ThswELzXU3X7Ebd1KdZ7v1rN3GiirRXGKWK099ovBM0FDJCvkopYNQ2a:X:mu:u1} and thus we just need to consider $\aa=(0, 0, 1)$. Noting $z \le w(z)$, this case follows from \eqref{8ThswELzXU3X7Ebd1KdZ7v1rN3GiirRXGKWK099ovBM0FDJCvkopYNQ2a:X:mu:u1}, concluding the proof of \eqref{8ThswELzXU3X7Ebd1KdZ7v1rN3GiirRXGKWK099ovBM0FDJCvkopYNQ2a:X:mu:u7}. \ghoahbkseibhseir In order to prove \eqref{8ThswELzXU3X7Ebd1KdZ7v1rN3GiirRXGKWK099ovBM0FDJCvkopYNQ2a:X:mu:u6}, we need to take the second derivative of the velocity $u$. Again the  cases $\alpha = (1,0,0), (0,1,0)$ are easy since we just need to do the same estimates as in \eqref{8ThswELzXU3X7Ebd1KdZ7v1rN3GiirRXGKWK099ovBM0FDJCvkopYNQ2a:X:mu:u1} with $\xi_i\omega_\xi$ for $i=1,2$. Also the case $z\partial_{z} \nabla_h u$ is essentially covered in \eqref{8ThswELzXU3X7Ebd1KdZ7v1rN3GiirRXGKWK099ovBM0FDJCvkopYNQ2a:X:mu:u3}. Therefore, we only need to consider the second derivative in the vertical direction.  Take the conormal derivative of $\partial_{z}\partial_{2}(-\Delta)^{-1}\omega_{3,\xi}$, we have \begin{align}   \label{co:z:2:3}   z\partial_{z} &\partial_{z}\partial_{2}(-\Delta)^{-1}\omega_{3,\xi}   \nn&= \frac{\ii}{2}z   \biggl( \int_0^z  e^{-|\xi|(z-\zz)}(1-e^{-2|\xi| \zz}) |\xi|\xi_2 \omega_{3,\xi}(s, \zz) \,d\zz   -2 \xi_2 \omega_{3,\xi}(s, z)  \notag  \\&\indeq +  \int_z^\infty  e^{-|\xi|(\zz-z)}(1-e^{-2|\xi| z}) |\xi| \xi_2 \omega_{3,\xi}(s, \zz) \,d\zz + 2 \int_z^\infty e^{-|\xi|(\zz-z)}e^{-2|\xi| z} |\xi|\xi_2 \omega_{3, \xi}(s, \zz)  \,d\zz \notag   \\&\indeq + 2  \int_z^\infty e^{-|\xi|(\zz-z)}e^{-2|\xi| z} |\xi|\xi_2 \omega_{3, \xi}(s, \zz) \,d\zz - 4 \int_z^\infty  e^{-|\xi|(\zz-z)}e^{-2|\xi| z} |\xi|\xi_2 \omega_{3, \xi}(s, \zz) \,d\zz \biggr).   \end{align} Similar estimates as the proof of~\eqref{8ThswELzXU3X7Ebd1KdZ7v1rN3GiirRXGKWK099ovBM0FDJCvkopYNQ2a:X:mu:u1} yield   \begin{align} \notag |e^{\ee(1+\mu-z)_+|\xi|}w(z)z\partial_{z} \partial_{z}\partial_{2}(-\Delta)^{-1}\omega_{3,\xi}| \lesssim   \lVert|\xi|^2 \omega_{3,\xi}\rVert_{\YY_{\mu}}+\lVert|\xi|^2 \omega_{3,\xi}\rVert_{S_{\mu}} + \alpha_3 \norm{\omega_{3,\xi}}_{\xx_\mu}.   \end{align} For the conormal derivative of   $\partial_{z}(\partial_{3}(-\Delta)^{-1}\omega_2)_{\xi}$, we have   \begin{align}   \label{der:y:2:u1}   \begin{split}   z\partial_{z}   &\partial_{z}(\partial_{3}(-\Delta)^{-1}\omega_2)_{\xi}\\&    =   \frac{z}{2}\biggl(-\int_0^z |\xi|^2 e^{-|\xi|(z-\zz)}(1-e^{-2|\xi|   \zz})\omega_{2,\xi}(s, \zz) \,d\zz    +  \int_z^\infty |\xi|^2   e^{-|\xi|(\zz-z)}(1+e^{-2|\xi| z})\omega_{2,\xi}(s, \zz) \,d\zz    \\&\indeq -2\int_z^\infty |\xi|^2 e^{-|\xi|(\zz-z)}e^{-2|\xi| z}\omega_{2,\xi}(s, \zz) \,d\zz  - 2\int_z^\infty |\xi|^2 e^{-|\xi|(\zz-z)}e^{-2|\xi|  z}\omega_{2,\xi}(s, \zz) \,d\zz  \\&\indeq + 4\int_z^\infty |\xi|^2  e^{-|\xi|(\zz-z)}e^{-2|\xi| z}\omega_{2,\xi}(s, \zz) \,d\zz \biggr)  -z\partial_{z}\omega_{2,\xi}(z).   \end{split}   \end{align} Note  that unlike in~\eqref{co:z:2:3}, there is an extra boundary term in  the above expression, which accounts for an  $\norm{z\partial_{z}\omega_{2,\xi}}_{\bar\xx_\mu}$ term in an upper  bound. Therefore, it follows   \begin{align} \notag   |e^{\ee(1+\mu-z)_+|\xi|}w(z)z\partial_{z} \partial_{z}\partial_{3}(-\Delta)^{-1}\omega_{2,\xi}| \lesssim   \lVert|\xi|^2 \omega_{2,\xi}\rVert_{\YY_{\mu}}+\lVert|\xi|^2   \omega_{2,\xi}\rVert_{S_{\mu}} + \alpha_3   \norm{z\partial_{z}\omega_{2,\xi}}_{\bar\xx_\mu} \,,   \end{align}   concluding the proof. \end{proof} \ghoahbkseibhseir \begin{lemma}   \label{} \label{vel:ver}Let $\mu \in (0, \mu_0 - \gamma s)$. For the   vertical velocity $u_3$ and its derivatives,  the bounds   \begin{align} \left\lVert  D^\alpha \left(\frac{ u_3}{z}\right)    \right \rVert_{\xx_{\mu}}    \lesssim      \lVert |\nabla_h|^{|\alpha_h|+1}\omega\rVert_{\YY_{\mu}}    +\lVert |\nabla_h|^{|\alpha_h|+1}\omega\rVert_{S_{\mu}}          ,    \label{8ThswELzXU3X7Ebd1KdZ7v1rN3GiirRXGKWK099ovBM0FDJCvkopYNQ2a:X:mu:u2}    \end{align} \begin{align}  \left\lVert  \frac{    D^{\alpha}\nabla_hu_3}{z}  \right \rVert_{\xx_{\mu}}    \lesssim    \lVert |\nabla_h|^{|\alpha_h|+2}\omega\rVert_{\YY_{\mu}}    +\lVert |\nabla_h|^{|\alpha_h|+2}\omega\rVert_{S_{\mu}},    \label{8ThswELzXU3X7Ebd1KdZ7v1rN3GiirRXGKWK099ovBM0FDJCvkopYNQ2a:X:mu:u5} \end{align} and \begin{align}   \left\lVert  D^\alpha \left(\frac{ \partial u_3}{\partial z}\right)  \right \rVert_{\xx_{\mu}}    \lesssim \lVert |\nabla_h|^{|\alpha_h|+1}\omega\rVert_{\YY_{\mu}} +\lVert |\nabla_h|^{|\alpha_h|+1}\omega\rVert_{S_{\mu}} +\alpha_3 \norm{|\nabla_h|\omega}_{X_\mu}, \label{8ThswELzXU3X7Ebd1KdZ7v1rN3GiirRXGKWK099ovBM0FDJCvkopYNQ2a:X:mu:u4} \end{align} hold for $|\alpha|\le1$. \end{lemma}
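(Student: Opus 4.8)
The plan is to exploit that the vertical velocity is generated \emph{only} by horizontal derivatives of the Newtonian potential: from \eqref{bio:sav} one has $u_3 = \partial_1(-\Delta)^{-1}\omega_2 - \partial_2(-\Delta)^{-1}\omega_1$, so that only the representation \eqref{8ThswELzXU3X7Ebd1KdZ7v1rN3GiirRXGKWK099ovBM0FDJCvkopYNQ2a124} (with $i=1,2$) enters, never the vertical-derivative formula \eqref{8ThswELzXU3X7Ebd1KdZ7v1rN3GiirRXGKWK099ovBM0FDJCvkopYNQ2a123}. In \eqref{8ThswELzXU3X7Ebd1KdZ7v1rN3GiirRXGKWK099ovBM0FDJCvkopYNQ2a124} the inner integral carries the factor $1-e^{-2|\xi|\zz}$ and the outer integral the factor $1-e^{-2|\xi|z}$; both vanish linearly at the boundary, which is the analytic manifestation of $u_3|_{z=0}=0$. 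The key elementary observation is that $1-e^{-2|\xi|z}\le 2|\xi|z$ and, since $\zz\le z$ on the inner integral, $1-e^{-2|\xi|\zz}\le 2|\xi|\zz\le 2|\xi|z$. Hence after dividing by $z$ the singular factor $1/z$ is absorbed and replaced by a single power of $|\xi|$, i.e.\ by one horizontal derivative $|\nabla_h|$. This is the whole source of the gain $|\nabla_h|^{|\alpha_h|+1}$ on the right-hand side of \eqref{8ThswELzXU3X7Ebd1KdZ7v1rN3GiirRXGKWK099ovBM0FDJCvkopYNQ2a:X:mu:u2}.

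To prove \eqref{8ThswELzXU3X7Ebd1KdZ7v1rN3GiirRXGKWK099ovBM0FDJCvkopYNQ2a:X:mu:u2} for $\alpha=0$ I would split $u_{3,\xi}/z$ into its inner and outer integrals, apply the two bounds above to extract the factor $|\xi|$, and then run verbatim the argument used for \eqref{8ThswELzXU3X7Ebd1KdZ7v1rN3GiirRXGKWK099ovBM0FDJCvkopYNQ2a:X:mu:u1}: transfer the analytic weight across the kernel by \eqref{8ThswELzXU3X7Ebd1KdZ7v1rN3GiirRXGKWK099ovBM0FDJCvkopYNQ2a133}, bound the near-boundary part in the $L^1$-based norm $\YY_\mu$ and the tail $z\ge 1+\mu$ in the $L^2$-based norm $S_\mu$, and sum in $\xi$, everywhere with $\omega$ replaced by $|\nabla_h|\omega$. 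The horizontal conormal cases $\alpha=(1,0,0),(0,1,0)$ only insert a factor $\xi_1$ or $\xi_2$, while the vertical case $\alpha=(0,0,1)$ follows from the identity $z\partial_z(u_3/z)=\partial_z u_3-u_3/z$, each summand of which is already controlled. It is essential here that, because only the horizontal-derivative kernel \eqref{8ThswELzXU3X7Ebd1KdZ7v1rN3GiirRXGKWK099ovBM0FDJCvkopYNQ2a124} occurs, differentiating in $z$ produces \emph{no} boundary term (the contributions at $\zz=z$ from the two integrals cancel); this is why \eqref{8ThswELzXU3X7Ebd1KdZ7v1rN3GiirRXGKWK099ovBM0FDJCvkopYNQ2a:X:mu:u2} and \eqref{8ThswELzXU3X7Ebd1KdZ7v1rN3GiirRXGKWK099ovBM0FDJCvkopYNQ2a:X:mu:u5} carry no $\alpha_3$ contribution. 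Estimate \eqref{8ThswELzXU3X7Ebd1KdZ7v1rN3GiirRXGKWK099ovBM0FDJCvkopYNQ2a:X:mu:u5} is then identical, with one additional factor $\xi_i$ raising the order to $|\nabla_h|^{|\alpha_h|+2}$.

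For \eqref{8ThswELzXU3X7Ebd1KdZ7v1rN3GiirRXGKWK099ovBM0FDJCvkopYNQ2a:X:mu:u4} the cleanest route for $\alpha=0$ and the horizontal conormal cases is the divergence-free condition \eqref{8ThswELzXU3X7Ebd1KdZ7v1rN3GiirRXGKWK099ovBM0FDJCvkopYNQ2a:02}, which gives $\partial_z u_3=-\nabla_h\cdot u_h$; commuting $D^\alpha$ with $\nabla_h$ and invoking \eqref{8ThswELzXU3X7Ebd1KdZ7v1rN3GiirRXGKWK099ovBM0FDJCvkopYNQ2a:X:mu:u3} yields the claimed bound, the $\alpha_3$ term being inherited from \eqref{8ThswELzXU3X7Ebd1KdZ7v1rN3GiirRXGKWK099ovBM0FDJCvkopYNQ2a:X:mu:u3}. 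The vertical conormal case $\alpha=(0,0,1)$ is the main obstacle: bounding $\nabla_h\cdot(z\partial_z u_h)$ through incompressibility would cost a second horizontal derivative and produce $|\nabla_h|^2\omega$, which is worse than the asserted $|\nabla_h|^{|\alpha_h|+1}=|\nabla_h|\omega$. I would instead differentiate the representation directly, writing $\partial_z u_3=\ii\xi_1\partial_z(-\Delta)^{-1}\omega_2-\ii\xi_2\partial_z(-\Delta)^{-1}\omega_1$ and using the elliptic identity $\partial_z^2(-\Delta)^{-1}=|\xi|^2(-\Delta)^{-1}-\mathrm{Id}$ (equivalently the explicit formula \eqref{8ThswELzXU3X7Ebd1KdZ7v1rN3GiirRXGKWK099ovBM0FDJCvkopYNQ2a1:der:y:u1}). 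The identity part contributes the purely local term $-\ii\xi_i z\omega_j$, which is bounded by $\||\nabla_h|\omega\|_{X_\mu}$ using $z\le w(z)$ and accounts for the $\alpha_3$ contribution; the principal part is governed by the Dirichlet Green's function, whose factor $\sinh(|\xi|\min(z,\zz))$ vanishes linearly at the boundary and thereby tames the $1/w(\zz)$ singularity of the weight, keeping the order at a single horizontal derivative.

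The crux of the whole lemma is this last point. Since near $\partial\HH$ the vorticity is only $\OO(1/\sqrt\nu)$ after weighting, in the $z\partial_z$ case one must balance the would-be $1/z$ (equivalently $1/w$) loss against the linear boundary-vanishing built into the kernels and against the conormal factor $z$, taking care not to sacrifice a horizontal derivative in the process. Once this interplay is arranged as above, the remaining steps --- weight transfer via \eqref{8ThswELzXU3X7Ebd1KdZ7v1rN3GiirRXGKWK099ovBM0FDJCvkopYNQ2a133} and the $\YY_\mu$/$S_\mu$ splitting into near-boundary and far-field regions --- are routine and parallel to the proof of \eqref{8ThswELzXU3X7Ebd1KdZ7v1rN3GiirRXGKWK099ovBM0FDJCvkopYNQ2a:X:mu:u1}.
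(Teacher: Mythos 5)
Your treatment of \eqref{8ThswELzXU3X7Ebd1KdZ7v1rN3GiirRXGKWK099ovBM0FDJCvkopYNQ2a:X:mu:u2} and \eqref{8ThswELzXU3X7Ebd1KdZ7v1rN3GiirRXGKWK099ovBM0FDJCvkopYNQ2a:X:mu:u5} coincides with the paper's: the representation \eqref{8ThswELzXU3X7Ebd1KdZ7v1rN3GiirRXGKWK099ovBM0FDJCvkopYNQ2a124}, the bound $|(1-e^{-2|\xi|\bar z})/z|\lesssim|\xi|$ for $\bar z\le z$ leading to \eqref{8ThswELzXU3X7Ebd1KdZ7v1rN3GiirRXGKWK099ovBM0FDJCvkopYNQ2a:MJ}, the weight transfer \eqref{8ThswELzXU3X7Ebd1KdZ7v1rN3GiirRXGKWK099ovBM0FDJCvkopYNQ2a133} with the $\YY_\mu/S_\mu$ splitting, the identity $z\partial_z(u_3/z)=\partial_z u_3-u_3/z$ for the conormal case, and \eqref{8ThswELzXU3X7Ebd1KdZ7v1rN3GiirRXGKWK099ovBM0FDJCvkopYNQ2a:X:mu:u5} as \eqref{8ThswELzXU3X7Ebd1KdZ7v1rN3GiirRXGKWK099ovBM0FDJCvkopYNQ2a:X:mu:u2} with $u_3$ replaced by $\nabla_h u_3$. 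The only divergence is in \eqref{8ThswELzXU3X7Ebd1KdZ7v1rN3GiirRXGKWK099ovBM0FDJCvkopYNQ2a:X:mu:u4} for $\alpha=(0,0,1)$: the paper disposes of all of \eqref{8ThswELzXU3X7Ebd1KdZ7v1rN3GiirRXGKWK099ovBM0FDJCvkopYNQ2a:X:mu:u4} in one line via $\partial_z u_{3,\xi}=-\ii\xi\cdot u_{h,\xi}$ and Lemma~\ref{L14}, and you are right that, read literally, \eqref{8ThswELzXU3X7Ebd1KdZ7v1rN3GiirRXGKWK099ovBM0FDJCvkopYNQ2a:X:mu:u3} then yields the exponent $|\nabla_h|^{|\alpha|+1}=|\nabla_h|^{2}$ rather than the stated $|\nabla_h|^{|\alpha_h|+1}=|\nabla_h|$; this discrepancy is harmless downstream, since \eqref{8ThswELzXU3X7Ebd1KdZ7v1rN3GiirRXGKWK099ovBM0FDJCvkopYNQ2a127} only requires $|\nabla_h|^{|\alpha_h|+2}=|\nabla_h|^{2}$ for that index, so the paper's shortcut suffices for the application even though it proves a formally weaker inequality. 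Your substitute argument via $\partial_z^2(-\Delta)^{-1}=|\xi|^2(-\Delta)^{-1}-\mathrm{Id}$ is sound and does recover the sharper exponent, with one caveat: the operative mechanism is not really the linear vanishing of the $\sinh$ factor (exploiting that would cost yet another $|\xi|$); what closes the principal part is writing $z\le|z-\zz|+\zz$, absorbing $|z-\zz|\,|\xi|$ into the exponential, and trading the remaining $\zz|\xi|$ against the weighted $L^\infty$ norm via the approximate identity $|\xi|e^{-|\xi||z-\zz|}$ together with $\zz\lesssim w(\zz)$ --- exactly the loss that the term $\alpha_3\norm{|\nabla_h|\omega}_{X_\mu}$ on the right of \eqref{8ThswELzXU3X7Ebd1KdZ7v1rN3GiirRXGKWK099ovBM0FDJCvkopYNQ2a:X:mu:u4} is there to absorb, and the same $L^1\!\to\!L^\infty$ trade that handles the local term $-\ii z(\xi_1\omega_2-\xi_2\omega_1)$ through $z\le w(z)$. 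With that adjustment your proof is complete and, on this sub-case, more careful than the paper's.
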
 \ghoahbkseibhseir    \begin{proof}[Proof of Lemma~\ref{vel:ver}]  First we prove   \eqref{8ThswELzXU3X7Ebd1KdZ7v1rN3GiirRXGKWK099ovBM0FDJCvkopYNQ2a:X:mu:u2},   beginning with the case $\alpha = (0, 0,0)$.  Using   \eqref{8ThswELzXU3X7Ebd1KdZ7v1rN3GiirRXGKWK099ovBM0FDJCvkopYNQ2a124}   we decompose the term $u_{3,\xi}/z$ as   \begin{align}   \frac{u_{3,\xi}}{z} =&   \frac{1}{z}(\partial_{1}(-\Delta)^{-1}\omega_{2,\xi} -   \partial_{2}(-\Delta)^{-1}\omega_{1,\xi})   \nn=&   -   \frac{\xi_1}{|\xi|}\frac{\ii}{2z}   \biggl(\int_0^z   e^{-|\xi|(z-\zz)}(1-e^{-2|\xi| \zz})\omega_{2,\xi}(s, \zz) \,d\zz   \notag  \\&\indeq\indeq\indeq\indeq\indeq\indeq+   \left(\int_z^{1+\mu}+\int_{1+\mu}^\infty\right)   e^{-|\xi|(\zz-z)}(1-e^{-2|\xi| z})\omega_{2,\xi}(s, \zz)   \,d\zz\biggr) \notag \\& + \frac{\xi_2}{|\xi|}\frac{\ii}{2z}   \biggl(\int_0^z e^{-|\xi|(z-\zz)}(1-e^{-2|\xi|   \zz})\omega_{1,\xi}(s, \zz) \,d\zz  \notag    \\&\indeq\indeq\indeq\indeq\indeq\indeq+   \left(\int_z^{1+\mu}+\int_{1+\mu}^\infty\right)   e^{-|\xi|(\zz-z)}(1-e^{-2|\xi| z})\omega_{1,\xi}(s, \zz)   \,d\zz\biggr)  \, .   \llabel{Vwb cL DlGK 1ro3 EEyqEA zw 6 sKe Eg2   sFf jz MtrZ 9kbd xNw66c xf t lzD GZh   8ThswELzXU3X7Ebd1KdZ7v1rN3GiirRXGKWK099ovBM0FDJCvkopYNQ2a139}   \end{align} From the bound   \begin{equation}   \left|\frac{1-e^{-2|\xi| \bar z}}{z}\right|   \les   |\xi|    \ \ \ \ \ \text{for}\  \bar z\leq z     \,   ,   \llabel{xQA WQ KkSX jqmm rEpNuG 6P y loq 8hH lSf Ma LXm5     RzEX W4Y1Bq ib 3 UOh Yw9     8ThswELzXU3X7Ebd1KdZ7v1rN3GiirRXGKWK099ovBM0FDJCvkopYNQ2a140}     \end{equation}  we arrive at  \begin{align}   \left|     \frac{u_{3,\xi}}{z} \right|    &\les    \int_0^z e^{-|\xi|(z-\zz)}     |\xi| |\omega_{h, \xi}(s, \zz)| \,d\zz  +     \left(\int_z^{1+\mu}+\int_{1+\mu}^\infty\right) e^{-|\xi|(\zz-z)}     |\xi| |\omega_{h, \xi}(s, \zz)| \,d\zz   \, .  \label{8ThswELzXU3X7Ebd1KdZ7v1rN3GiirRXGKWK099ovBM0FDJCvkopYNQ2a:MJ}    \end{align} Using   \eqref{8ThswELzXU3X7Ebd1KdZ7v1rN3GiirRXGKWK099ovBM0FDJCvkopYNQ2a133}   and similar arguments as   \eqref{8ThswELzXU3X7Ebd1KdZ7v1rN3GiirRXGKWK099ovBM0FDJCvkopYNQ2a134}--\eqref{8ThswELzXU3X7Ebd1KdZ7v1rN3GiirRXGKWK099ovBM0FDJCvkopYNQ2a136},   we  obtain the inequality   \eqref{8ThswELzXU3X7Ebd1KdZ7v1rN3GiirRXGKWK099ovBM0FDJCvkopYNQ2a:X:mu:u2}   for $|\alpha|=0$. The case  $\alpha = (1,0,0), (0,1,0)$ follows by   adding an extra $x$~derivative in the estimates.  \ghoahbkseibhseir   It remains to consider the case  $\alpha =(0,0,1)$.  From the   incompressibility we have  \begin{align}   \label{8ThswELzXU3X7Ebd1KdZ7v1rN3GiirRXGKWK099ovBM0FDJCvkopYNQ2a:der:y:u2}   z\partial_{z}\left(\frac{u_{3,\xi}}{z}\right)  = \partial_z   u_{3,\xi} - \frac{u_{3,\xi}}{z} = - \ii \xi\cdot u_{h,\xi} -   \frac{u_{3,\xi}}{z} \, . \end{align} The bound for the second term on the far right of \eqref{8ThswELzXU3X7Ebd1KdZ7v1rN3GiirRXGKWK099ovBM0FDJCvkopYNQ2a:der:y:u2}  was established in \eqref{8ThswELzXU3X7Ebd1KdZ7v1rN3GiirRXGKWK099ovBM0FDJCvkopYNQ2a:MJ}, whereas the bound for the first term follows from the inequality for $\alpha = (1,0,0), (0,1,0)$ in \eqref{8ThswELzXU3X7Ebd1KdZ7v1rN3GiirRXGKWK099ovBM0FDJCvkopYNQ2a:X:mu:u1}. \ghoahbkseibhseir Inequality~\eqref{8ThswELzXU3X7Ebd1KdZ7v1rN3GiirRXGKWK099ovBM0FDJCvkopYNQ2a:X:mu:u5} is a direct application of inequality~\eqref{8ThswELzXU3X7Ebd1KdZ7v1rN3GiirRXGKWK099ovBM0FDJCvkopYNQ2a:X:mu:u2} with $u_3$ replaced by $\nabla_hu_3$. Noting that $   \partial_z u_{3,\xi}= - \ii \xi\cdot u_{h,\xi}, $, the inequality~\eqref{8ThswELzXU3X7Ebd1KdZ7v1rN3GiirRXGKWK099ovBM0FDJCvkopYNQ2a:X:mu:u4} is essentially covered in the previous lemma. Thus the proof is concluded.   \end{proof} \ghoahbkseibhseir Having established Lemma~\ref{L14} and \ref{vel:ver}, we give the proof of Lemma~\ref{L01}. \begin{proof}[Proof of Lemma~\ref{L01}] In order to prove  \eqref{8ThswELzXU3X7Ebd1KdZ7v1rN3GiirRXGKWK099ovBM0FDJCvkopYNQ2a128}, using \eqref{8ThswELzXU3X7Ebd1KdZ7v1rN3GiirRXGKWK099ovBM0FDJCvkopYNQ2a129} and a triangle inequality leads to   \begin{equation} \notag e^{\ee(1+\mu-z)_+ |\xi|} \leq   e^{\ee(1+\mu-z)_+ |\eta|} e^{\ee(1+\mu-z)_+ |\xi-\eta|}.   \end{equation} By the definition of the $X_{\mu}$ norm and Young's inequality in $\xi$ and $\eta$, it follows that   \begin{align}   &\lVert N(s)\rVert_{X_{\mu}}   \les     \lVert \omega\rVert_{X_{\mu}} \left\lVert \nabla_hu_h   \right \rVert_{\xx_{\mu}}  +   \lVert z\omega_h\rVert_{\xx_{\mu}} \left\lVert  \frac{\nabla_hu_3}{z}  \right \rVert_{\xx_{\mu}}  +  \lVert  \omega_3\rVert_{\xx_{\mu}} \left\lVert   \partial_{z}u_h \right  \rVert_{\bar\xx_{\mu}}   \notag  \\&\indeq   +  \lVert  \omega_3\rVert_{\xx_{\mu}} \left\lVert \partial_{z}u_3   \right  \rVert_{\xx_{\mu}}   +  \lVert  \nabla_{h}\omega\rVert_{X_{\mu}}\left\lVert  u_h  \right  \rVert_{\xx_{\mu}}       +\lVert \nabla_{h}  \omega\rVert_{X_{\mu}}   \left\lVert  u_h  \right \rVert_{\xx_{\mu}}   \notag  \\&\indeq   +\lVert z\partial_{z}\omega\rVert_{X_{\mu}} \left\lVert   \frac{u_3}{z} \right \rVert_{\xx_{\mu}}  	   +\lVert z\partial_{z}  \omega\rVert_{X_{\mu}} \left\lVert   \frac{u_3}{z} \right \rVert_{\xx_{\mu}}     .     \label{8ThswELzXU3X7Ebd1KdZ7v1rN3GiirRXGKWK099ovBM0FDJCvkopYNQ2a137}     \end{align} Here we estimate the vertical and horizontal     components of the vorticity separately. The reason is that     compared with the horizontal component, the vertical component is     much smaller in $L^\infty$ norm.  Hence, in the $X_\mu$ norm     defined in~\eqref{nor:X:mu}, there is a weight function $w(z)$     only for the horizontal components. Therefore, terms like     $\omega_3\omega_h$ are of the size $1/w(z)$ which is still     controlled. Using Lemma~\ref{L14} with $|\alpha| = 0$  we arrive     at   \begin{align}   \lVert N(s)\rVert_{X_{\mu}}   &\les     \bigl(\lVert (1+|\nabla_h|^2)\omega\rVert_{\YY_{\mu}}+\lVert (1+     |\nabla_h|^2)\omega\rVert_{S_{\mu}} + \alpha_3\lVert      \omega\rVert_{X_{\mu}}\bigr)   \lVert \omega\rVert_{X_{\mu}}   \nn&\indeq   +   \bigl(\lVert (1+|\nabla_h|)\omega\rVert_{\YY_{\mu}}+\lVert (1+   |\nabla_h|)\omega\rVert_{S_{\mu}} \bigr)   \lVert D   \omega\rVert_{X_{\mu}}    \,  ,    \llabel{5h6 f6 o8kw 6frZ wg6fIy   XP n ae1 TQJ Mt2 TT fWWf jJrX ilpYGr Ul Q 4uM 7Ds   8ThswELzXU3X7Ebd1KdZ7v1rN3GiirRXGKWK099ovBM0FDJCvkopYNQ2a135}   \end{align} and   \eqref{8ThswELzXU3X7Ebd1KdZ7v1rN3GiirRXGKWK099ovBM0FDJCvkopYNQ2a128}   is established.  Next we analyze the first order derivatives of the   nonlinear term.  By the Leibniz rule, for $|\alpha| = 1$, we have 
  \begin{align}
    D^{\alpha}N_\xi 
   &=
   \left((D^{\alpha}\omega_h\cdot\nabla_h u_h)_\xi +(\omega_h\cdot\nabla_h D^{\alpha}u_h)_\xi
   , (D^{\alpha}\omega_h\cdot\nabla_h u_3)_\xi +(\omega_h\cdot\nabla_h D^{\alpha}u_3)_\xi\right)
    \nn & \quad
    +\left((D^{\alpha}\omega_3\partial_z u_h)_\xi +(\omega_3D^{\alpha}\partial_z u_h)_\xi
    , (D^{\alpha}\omega_3\partial_z u_3)_\xi +(\omega_3D^{\alpha}\partial_z u_3)_\xi \right)
   \nn & \quad-
   ( D^\alpha u_h\cdot\nabla_{h}\omega)_\xi -(u_h\cdot \nabla_h D^\alpha\omega)_\xi
   -\left(D^\alpha\left(\frac{u_3}{z}\right) z\partial_{z}\omega\right)_\xi -\left(\frac{u_3}{z}   D^\alpha(z\partial_{z})\omega\right)_\xi
   \, .
  \llabel{EQ:der:non}
  \end{align} Similarly to \eqref{8ThswELzXU3X7Ebd1KdZ7v1rN3GiirRXGKWK099ovBM0FDJCvkopYNQ2a137}, one gets   \begin{align}   \lVert    &D^{\alpha}     N(s)\rVert_{X_{\mu}}   \nn    &\quad \les  \lVert D^{\alpha}\omega\rVert_{X_{\mu}}   \left\lVert  \nabla_hu_h  \right \rVert_{\xx_{\mu}}   +  \lVert   \omega\rVert_{X_{\mu}}   \left\lVert  \nabla_hD^{\alpha}u_h  \right   \rVert_{\xx_{\mu}}   +   \lVert zD^{\alpha}\omega_h\rVert_{\xx_{\mu}}    \left\lVert   \frac{\nabla_hu_3}{z}  \right \rVert_{\xx_{\mu}}   \notag    \\&\indeq+  \lVert z\omega_h\rVert_{\xx_{\mu}}   \left\lVert   \frac{\nabla_hD^{\alpha}u_3}{z} \right \rVert_{\xx_{\mu}}  +  \lVert   D^{\alpha}\omega_3\rVert_{\xx_{\mu}}   \left\lVert  \partial_{z}u_h   \right \rVert_{\bar\xx_{\mu}}  +  \lVert \omega_3\rVert_{\xx_{\mu}}   \left\lVert  D^{\alpha}\partial_{z}u_h  \right   \rVert_{\bar\xx_{\mu}}     \notag  \\&\indeq+  \lVert D^{\alpha}\omega_3\rVert_{\xx_{\mu}}   \left\lVert  \partial_{z}u_3  \right \rVert_{\xx_{\mu}}  +  \lVert  \omega_3\rVert_{\xx_{\mu}}   \left\lVert  D^{\alpha}\partial_{z}u_3  \right \rVert_{\xx_{\mu}}  +  \lVert \nabla_{h}\omega\rVert_{X_{\mu}}  \left\lVert  D^{\aa}u_h  \right \rVert_{\xx_{\mu}}  \notag  \\&\indeq  +\lVert \nabla_{h} D^\alpha  \omega\rVert_{X_{\mu}}    \left\lVert  u_h  \right \rVert_{\xx_{\mu}}    +\lVert z\partial_{z}\omega\rVert_{X_{\mu}}    \left\lVert  D^{\alpha} \left(\frac{u_3}{z}\right)  \right   \rVert_{\xx_{\mu}}    +\lVert D^\alpha z\partial_{z}   \omega\rVert_{X_{\mu}}    \left\lVert  \frac{u_3}{z}  \right    \rVert_{\xx_{\mu}}    \, .   \label{8ThswELzXU3X7Ebd1KdZ7v1rN3GiirRXGKWK099ovBM0FDJCvkopYNQ2a130}   \end{align} We appeal to the same idea as in the treatment of the   $N_\xi$ case. Note that the term  $\left\lVert   \nabla_hD^{\alpha}u_3/z  \right \rVert_{\xx_{\mu}}$  leads to the   third order derivative in $\omega$. However, as we shall see, the   third derivative only appears in the $Y_\mu $ norm bound. For   \eqref{8ThswELzXU3X7Ebd1KdZ7v1rN3GiirRXGKWK099ovBM0FDJCvkopYNQ2a127},   we apply the estimates from  Lemma~\ref{L14} --~\ref{vel:ver} to   \eqref{8ThswELzXU3X7Ebd1KdZ7v1rN3GiirRXGKWK099ovBM0FDJCvkopYNQ2a130}   to obtain   \begin{align}   \sum_{|\aa|=1}&\lVert D^{\alpha} N(s)\rVert_{X_{\mu}}   \nn&\les    \lVert \omega\rVert_{X_{\mu}} \sum_{|\aa|=1} \left(   \lVert (1+ |\nabla_h|^{|\alpha_h|+2})\omega\rVert_{\YY_{\mu}}   +\lVert (1 + |\nabla_h|^{|\alpha_h|+2})\omega\rVert_{S_{\mu}}   + \alpha_3 \norm{(z\partial_{z})^{\alpha_3}\omega}_{X_\mu} +   \alpha_3 \norm{ |\nabla_h|\omega}_{X_\mu}      \right)\nn&\indeq+    \sum_{|\aa|=1}\lVert D^{\aa} \omega \rVert_{X_{\mu}} \left(   \lVert (1+ |\nabla_h|^{2}) \omega\rVert_{\YY_{\mu}}     +\lVert (1+   |\nabla_h|^{2}) \omega\rVert_{S_{\mu}}      + \alpha_3   (\norm{z\partial_{z})^{\alpha_3}\omega}_{X_\mu}     \right) \notag  \\&\indeq    +   \sum_{|\aa|=2}\lVert D^{\aa} \omega \rVert_{X_{\mu}} \left(   \lVert (1+ |\nabla_h|) \omega\rVert_{\YY_{\mu}}     +\lVert (1+   |\nabla_h|) \omega\rVert_{S_{\mu}}      \right)      \,    ,   \llabel{0Gd SY FUXL zyQZ hVZMn9 am P 9aE Wzk au0 6d ZghM ym3R jfdePG   ln 8 s7x HYC   8ThswELzXU3X7Ebd1KdZ7v1rN3GiirRXGKWK099ovBM0FDJCvkopYNQ2a141}   \end{align} and   \eqref{8ThswELzXU3X7Ebd1KdZ7v1rN3GiirRXGKWK099ovBM0FDJCvkopYNQ2a127}   is proven. \end{proof} \ghoahbkseibhseir Next, we estimate the term $ D^{\aa} (1+ |\nabla_h|) N(s)$ for $0\le |\aa|\le 1$ in the $\YY$ norm.  \cole \begin{lemma} \label{L09} Let $\mu \in (0, \mu_0 - \gamma s)$ be arbitrary.  For the nonlinear term, we have the inequalities \begin{align}   &\lVert (1+ |\nabla_h|) N(s)\rVert_{\YY_{\mu}}  \nn&\indeq \lesssim    (\lVert (1+ |\nabla_h|)\omega\rVert_{\YY_{\mu}} + \lVert (1+ |\nabla_h|)\omega_3\rVert_{\xx_{\mu}})   (\lVert (1+ |\nabla_h|^2) \omega\rVert_{\YY_{\mu}}+\lVert (1+ |\nabla_h|^2)\omega\rVert_{S_{\mu}} )  \nn&\indeq\quad  +  \sum_{|\alpha|=1}\lVert (1+ |\nabla_h|) D^{\aa}  \omega\rVert_{\YY_{\mu}}  \left( \lVert (1+  |\nabla_h|^2)\omega\rVert_{\YY_{\mu}}        +\lVert (1+  |\nabla_h|^2)\omega\rVert_{S_{\mu}} \right)  \label{8ThswELzXU3X7Ebd1KdZ7v1rN3GiirRXGKWK099ovBM0FDJCvkopYNQ2a258}  \end{align} and   \begin{align}   \sum_{|\alpha|=1}&\lVert  D^{\alpha} (1+ |\nabla_h|)    N(s)\rVert_{Y_{\mu}}     \notag  \\&     \les  \sum_{|\alpha|\le1}\lVert D^{\alpha}(1+     |\nabla_h|)\omega\rVert_{Y_{\mu}}   \sum_{|\alpha|\le2}\bigg(     \lVert D^{\aa}(1+|\nabla_h|) \omega\rVert_{\YY_{\mu}}+\lVert     D^{\aa} (1+ |\nabla_h|) \omega\rVert_{S_{\mu}}    \bigg)    \notag     \\&\quad\quad +  \lVert (1+ |\nabla_h|)\omega\rVert_{\xx_{\mu}}      \left( \sum_{|\alpha|\le1} \lVert D^{\aa} (1+     |\nabla_h|)\omega\rVert_{\YY_{\mu}}        +\lVert |\nabla_h| (1+     |\nabla_h|)\omega\rVert_{S_{\mu}}     \right)   \notag     \\&\quad\quad   +  \sum_{|\alpha|\le1}\lVert D^{\alpha}(1+     |\nabla_h|)\omega_3\rVert_{\xx_{\mu}} \Vert (1+ |\nabla_h|)^2     \omega \Vert_{Y_\mu}    \, . \notag   \end{align}\end{lemma}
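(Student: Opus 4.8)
The plan is to mirror the proof of Lemma~\ref{L01}, replacing the $L^\infty$-based $X_\mu$ norm by the $L^1$-based $\YY_\mu$ norm and exploiting the favorable convolution structure of the analytic norms. Starting from the decomposition \eqref{8ThswELzXU3X7Ebd1KdZ7v1rN3GiirRXGKWK099ovBM0FDJCvkopYNQ2a129} of $N_\xi$ into the four groups $\omega_h\cdot\nabla_h u$, $\omega_3\partial_z u$, $u_h\cdot\nabla_h\omega$, and $(u_3/z)\,z\partial_z\omega$, I would estimate each bilinear term by placing one factor in $\YY_\mu$ (the $L^1$ slot) and the other in $\xx_\mu$ (the $L^\infty$ slot). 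The key tame product bound is that $\lVert fg\rVert_{\YY_\mu}\les \lVert f\rVert_{\YY_\mu}\lVert g\rVert_{\xx_\mu}$, which follows from the triangle inequality $e^{\ee(1+\mu-z)_+|\xi|}\le e^{\ee(1+\mu-z)_+|\eta|}e^{\ee(1+\mu-z)_+|\xi-\eta|}$, from $\lVert ab\rVert_{L^1}\le\lVert a\rVert_{L^1}\lVert b\rVert_{L^\infty}$ on each contour $\partial\Omega_\theta$, and from Young's inequality in the frequency variables $\xi,\eta$ (note that $\xx_\mu$ is also $\ell^1$ in $\xi$).

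The extra horizontal derivative $(1+|\nabla_h|)$ is distributed by the same frequency triangle inequality, so that $\lVert(1+|\nabla_h|)(fg)\rVert_{\YY_\mu}\les\lVert(1+|\nabla_h|)f\rVert_{\YY_\mu}\lVert g\rVert_{\xx_\mu}+\lVert f\rVert_{\YY_\mu}\lVert(1+|\nabla_h|)g\rVert_{\xx_\mu}$. For the velocity factors placed in the $\xx_\mu$ slot I would invoke Lemma~\ref{L14} and Lemma~\ref{vel:ver}, which convert $\xx_\mu$-norms of $u$, $\nabla_h u$, $\partial_z u$, $u_3/z$, and $\nabla_h u_3/z$ into $\YY_\mu$- and $S_\mu$-norms of $\omega$ at the cost of one or two horizontal derivatives; these lost derivatives are precisely why the right-hand side of \eqref{8ThswELzXU3X7Ebd1KdZ7v1rN3GiirRXGKWK099ovBM0FDJCvkopYNQ2a258} carries up to $(1+|\nabla_h|^2)\omega$. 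Combining the product rule with these velocity bounds and summing over the four groups yields the first inequality.

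For the second inequality I would apply the Leibniz rule to $D^\alpha(1+|\nabla_h|)N$ with $|\alpha|=1$, producing for each bilinear term the two contributions in which the conormal derivative $D^\alpha$ falls either on the vorticity or on the velocity, exactly as in the expansion leading to \eqref{8ThswELzXU3X7Ebd1KdZ7v1rN3GiirRXGKWK099ovBM0FDJCvkopYNQ2a130}. Each resulting product is then estimated by the same $\YY_\mu$-over-$\xx_\mu$ rule, the velocity derivatives being supplied again by Lemma~\ref{L14} and Lemma~\ref{vel:ver}; the worst term $\nabla_h D^\alpha u_3/z$, estimated via \eqref{8ThswELzXU3X7Ebd1KdZ7v1rN3GiirRXGKWK099ovBM0FDJCvkopYNQ2a:X:mu:u5}, forces the highest-order horizontal derivatives of $\omega$ in the $Y_\mu$ and $S_\mu$ norms, which is exactly why the horizontal order on the right-hand side reaches $(1+|\nabla_h|)^2$.

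The main obstacle is the vortex-stretching group $\omega_3\partial_z u$ together with the transport group $(u_3/z)\,z\partial_z\omega$, and the resolution rests on the two-fold advantage of the $L^1$ topology. First, because $\YY_\mu$ is $L^1$ in $z$, the $\OO(1/\sqrt{\nu})$ boundary-layer peak of $\omega_h$ integrates to an $\OO(1)$ quantity, so no weight $w$ is needed and both $\omega_h$ and $\omega_3$ are controlled; second, a velocity factor placed in the $\YY_\mu$ slot is bounded directly through its Biot--Savart integral, whose kernel is integrable in $z$, so that no horizontal derivative is lost, in contrast with the $\xx_\mu$ estimates of Lemma~\ref{L14}. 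For $\omega_3\partial_z u$ I would therefore place $\omega_3$ in the $\xx_\mu$ slot, where it is genuinely $\OO(1)$ and needs no weight, which produces the term $\lVert(1+|\nabla_h|)\omega_3\rVert_{\xx_\mu}$, while $\partial_z u$ occupies the $\YY_\mu$ slot; for the transport group the factor $u_3/z$ is handled by Lemma~\ref{vel:ver}, which cancels the $1/z$ singularity against one horizontal derivative. The delicate part is the bookkeeping: one must verify, term by term, that the single spare derivative $(1+|\nabla_h|)$ (two for the $|\alpha|=1$ estimate) always suffices to pay simultaneously for the derivative cost of the $\xx_\mu$ velocity estimates and for the $\nabla_h D^\alpha u_3/z$ contribution, keeping careful track throughout of which factor occupies the $L^1$ slot and which the $L^\infty$ slot.
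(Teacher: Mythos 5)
Your outline is essentially the paper's own proof: the same $\ell^1_\xi$ convolution product rule pairing an $L^1$-in-$z$ factor ($\YY_\mu$) with an $L^\infty$-in-$z$ factor ($\xx_\mu$), the same Leibniz expansion for $|\alpha|=1$, and, crucially, the same resolution of the vortex-stretching group by putting $\omega_3$ in the unweighted $\xx_\mu$ slot and $\partial_z u$ in the $\YY_\mu$ slot, where the Biot--Savart kernel is integrable in $z$ and no horizontal derivative is lost (the paper isolates this as Lemma~\ref{Y:vel:hor}). One bookkeeping point in your narrative is wrong, though, and would break the derivative count if implemented literally: you cite the decomposition \eqref{8ThswELzXU3X7Ebd1KdZ7v1rN3GiirRXGKWK099ovBM0FDJCvkopYNQ2a129} and name $\nabla_h D^\alpha u_3/z$, estimated via \eqref{8ThswELzXU3X7Ebd1KdZ7v1rN3GiirRXGKWK099ovBM0FDJCvkopYNQ2a:X:mu:u5}, as the ``worst term.'' That term only arises if one writes $\omega_h\cdot\nabla_h u_3$ as $z\omega_h\cdot(\nabla_h u_3/z)$, which is necessary in the $X_\mu$ estimate (because the vertical component of $N$ carries no weight there) but is both unnecessary and harmful in the $\YY_\mu$ estimate: dividing by $z$ costs an extra horizontal derivative, and \eqref{8ThswELzXU3X7Ebd1KdZ7v1rN3GiirRXGKWK099ovBM0FDJCvkopYNQ2a:X:mu:u5} with $|\alpha|=1$ would produce $|\nabla_h|^{3}(1+|\nabla_h|)\omega$, exceeding the right-hand side of the second inequality. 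The paper avoids this by using the rougher decomposition $N=\omega_h\cdot\nabla_h u+\omega_3\partial_z u-u_h\cdot\nabla_h\omega-(u_3/z)z\partial_z\omega$ (which is in fact the form you list in your first paragraph) and estimating $\nabla_h D^\alpha(1+|\nabla_h|)u$ whole via Lemma~\ref{L:vel:hor}; the $1/z$ division should be reserved for the transport group only. Relatedly, the $(1+|\nabla_h|)^2\omega$ term in the $Y_\mu$ norm on the right of the second inequality comes from $\Vert D^\alpha\partial_z(1+|\nabla_h|)u\Vert_{Y_\mu}$ paired with $\omega_3$, not from $\nabla_h D^\alpha u_3/z$. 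With that correction your argument goes through as in the paper.
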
 \colb \ghoahbkseibhseir Since the $Y$ norm is not as sensitive as the $X$ norm to the fast growth of the vorticity near the boundary, we use a rougher decomposition than \eqref{8ThswELzXU3X7Ebd1KdZ7v1rN3GiirRXGKWK099ovBM0FDJCvkopYNQ2a129}, which is   \begin{align} \notag   N_\xi=&(\omega_h\cdot\nabla_h u)_\xi +(\omega_3\partial_z u)_\xi -(u_h\cdot\nabla_{h}\omega)_\xi-\left(\frac{u_3}{z}z\partial_{z}\omega\right)_\xi, \end{align} from where we may easily get   \begin{align}   (1+   |\nabla_h|)N_\xi=&(1+|\xi|)\left((\omega_h\cdot\nabla_h u)_\xi    +(\omega_3\partial_z u)_\xi   -(u_h\cdot\nabla_{h}\omega)_\xi-\left(\frac{u_3}{z}z\partial_{z}\omega\right)_\xi   \right).   \label{8ThswELzXU3X7Ebd1KdZ7v1rN3GiirRXGKWK099ovBM0FDJCvkopYNQ2a:non}   \end{align} Before proceeding to the proof of the Lemma~\ref{L09},   we need the following auxiliary lemmas. First, as an obvious   corollary of Lemma~\ref{L14} (replacing $u$ by $(1+|\nabla_h|)u$ in   \eqref{8ThswELzXU3X7Ebd1KdZ7v1rN3GiirRXGKWK099ovBM0FDJCvkopYNQ2a:X:mu:u7}   and   \eqref{8ThswELzXU3X7Ebd1KdZ7v1rN3GiirRXGKWK099ovBM0FDJCvkopYNQ2a:X:mu:u3}),   we state the bounds for the horizontal velocity   $u_h$. \ghoahbkseibhseir \cole \begin{lemma} \label{L:vel:hor} Let   $\mu \in (0, \mu_0 - \gamma s)$. For the horizontal velocity $u_h$   and its derivatives, we have  \begin{align}    \lVert (1+ |\nabla_h|) \nabla_h^{\alpha_h} u\rVert_{\xx_{\mu}}    \lesssim   \lVert|\nabla_h|^{|\alpha_h|}(1+ |\nabla_h|)     \omega\rVert_{\YY_{\mu}}+\lVert|\nabla_h|^{|\alpha_h|} (1+    |\nabla_h|)\omega\rVert_{S_{\mu}}    \llabel{IV9 Hw Ka6v EjH5    J8Ipr7 Nk C xWR 84T Wnq s0 fsiP qGgs Id1fs5 3A T 71q RIc    8ThswELzXU3X7Ebd1KdZ7v1rN3GiirRXGKWK099ovBM0FDJCvkopYNQ2a:Y:mu:u1}    \,   \end{align} and \begin{align}    \lVert D^{\alpha} \nabla_h    (1+ |\nabla_h|) u \rVert_{\xx_{\mu}}   &\lesssim    \lVert|\nabla_h|^{|\alpha|+1} (1+    |\nabla_h|)\omega\rVert_{\YY_{\mu}}+\lVert|\nabla_h|^{|\alpha|+1}    (1+ |\nabla_h|)\omega\rVert_{S_{\mu}}    \nn&\indeq    + \alpha_3 \norm{ |\nabla_h|(1+ |\nabla_h|)\omega}_{X_\mu}    \llabel{zPX 77 Si23 GirL 9MQZ4F pi g dru NYt h1K 4M Zilv rRk6    B4W5B8 Id 3 Xq9 nhx    8ThswELzXU3X7Ebd1KdZ7v1rN3GiirRXGKWK099ovBM0FDJCvkopYNQ2a:Y:mu:u3}    \,  \end{align} for $0 \leq |\alpha| \leq 1$. \end{lemma}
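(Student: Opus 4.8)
The plan is to obtain both inequalities directly from Lemma~\ref{L14}, as the preceding sentence indicates, by inserting the horizontal Fourier multiplier $(1+|\nabla_h|)$ into the bounds already established there. The structural reason this is an immediate corollary is that $(1+|\nabla_h|)$ acts on the Fourier side as multiplication by the symbol $(1+|\xi|)$, and this symbol commutes with every operator appearing in Lemmas~\ref{der:str} and~\ref{L14}: the conormal derivatives $D^\alpha$, the horizontal gradient $\nabla_h$, and the Biot--Savart integral operators in \eqref{8ThswELzXU3X7Ebd1KdZ7v1rN3GiirRXGKWK099ovBM0FDJCvkopYNQ2a124}--\eqref{8ThswELzXU3X7Ebd1KdZ7v1rN3GiirRXGKWK099ovBM0FDJCvkopYNQ2a123} all act diagonally in $\xi$, with kernels depending on the horizontal frequency only through $|\xi|$ and $\xi_i/|\xi|$. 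Since the norms $\bar\xx_\mu$, $\xx_\mu$, $\YY_\mu$, $S_\mu$, and $X_\mu$ are all $\ell^1$ sums over $\xi$ of mode-wise quantities, multiplying $u_\xi$ by $(1+|\xi|)$ amounts to running the proof of Lemma~\ref{L14} verbatim with $\omega_\xi$ replaced throughout by $(1+|\xi|)\omega_\xi$; equivalently, $(1+|\nabla_h|)u$ is the velocity recovered by Lemma~\ref{der:str} from the vorticity $(1+|\nabla_h|)\omega$, and it still vanishes on $\{z=0\}$.

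For the first inequality I would apply \eqref{8ThswELzXU3X7Ebd1KdZ7v1rN3GiirRXGKWK099ovBM0FDJCvkopYNQ2a:X:mu:u7} with $u$ replaced by $(1+|\nabla_h|)u$ and with the purely horizontal multi-index $\alpha=(\alpha_h,0)$, so that $D^\alpha=\nabla_h^{\alpha_h}$, $\alpha_3=0$, and $|\nabla_h|^{|\alpha|}=|\nabla_h|^{|\alpha_h|}$. The trailing term $\alpha_3\norm{\omega}_{X_\mu}$ then drops out and, after commuting $(1+|\nabla_h|)$ past $\nabla_h^{\alpha_h}$, the estimate reads precisely
\begin{align}
\lVert (1+|\nabla_h|)\nabla_h^{\alpha_h} u\rVert_{\xx_\mu} \lesssim \lVert |\nabla_h|^{|\alpha_h|}(1+|\nabla_h|)\omega\rVert_{\YY_\mu} + \lVert |\nabla_h|^{|\alpha_h|}(1+|\nabla_h|)\omega\rVert_{S_\mu}, \notag
\end{align}
as claimed.

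For the second inequality I would instead invoke \eqref{8ThswELzXU3X7Ebd1KdZ7v1rN3GiirRXGKWK099ovBM0FDJCvkopYNQ2a:X:mu:u3}, again with $u$ replaced by $(1+|\nabla_h|)u$, now keeping a general $\alpha$ with $0\le|\alpha|\le1$. Commuting $(1+|\nabla_h|)$ past $D^\alpha$ and $\nabla_h$ turns the left side into $\lVert D^\alpha\nabla_h(1+|\nabla_h|)u\rVert_{\xx_\mu}$, while the $\alpha_3$ contribution on the right acquires the extra symbol and becomes $\alpha_3\norm{|\nabla_h|(1+|\nabla_h|)\omega}_{X_\mu}$, giving exactly the stated bound. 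I do not anticipate any genuine obstacle: the only point needing verification is the commutation of $(1+|\nabla_h|)$ with the Biot--Savart representation, which is transparent from the explicit $\xi$-diagonal formulas \eqref{8ThswELzXU3X7Ebd1KdZ7v1rN3GiirRXGKWK099ovBM0FDJCvkopYNQ2a124}--\eqref{8ThswELzXU3X7Ebd1KdZ7v1rN3GiirRXGKWK099ovBM0FDJCvkopYNQ2a123}, so the lemma is, as noted, an immediate corollary of Lemma~\ref{L14}.
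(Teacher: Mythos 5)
Your proposal is correct and is exactly the paper's argument: the paper presents this lemma as an immediate corollary of Lemma~\ref{L14}, obtained by replacing $u$ with $(1+|\nabla_h|)u$ (equivalently $\omega$ with $(1+|\nabla_h|)\omega$) in the two cited inequalities, which is precisely what you do. Your added justification that the multiplier $(1+|\xi|)$ commutes with the $\xi$-diagonal Biot--Savart kernels and is compatible with the $\ell^1$-in-$\xi$ structure of the norms is the correct reason the substitution is legitimate.
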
 \colb    Similarly as Lemma~\ref{vel:ver}, we get the following    estimates. \begin{lemma} \label{L:vel:ver} Let $\mu \in (0, \mu_0 -     \gamma s)$. For the vertical velocity $u_3$ and its derivatives,    the bound \begin{align}  \left\lVert D^\alpha (1+ |\nabla_h|)\left(\frac{ u_3}{z}\right)  \right\rVert_{\xx_{\mu}}    \lesssim      \lVert  |\nabla_h|^{|\alpha_h|+1}(1+ |\nabla_h|)\omega\rVert_{\YY_{\mu}}  +\lVert |\nabla_h|^{|\alpha_h|+1}(1+  |\nabla_h|)\omega\rVert_{S_{\mu}}   \llabel{EN4 P6 ipZl a2UQ Qx8mda  g7 r VD3 zdD rhB vk LDJo tKyV 5IrmyJ R5 e txS 1cv  8ThswELzXU3X7Ebd1KdZ7v1rN3GiirRXGKWK099ovBM0FDJCvkopYNQ2a:Y:mu:u2}  \end{align} holds for $|\alpha|\le1$. \end{lemma}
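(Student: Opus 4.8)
The plan is to deduce this estimate directly from Lemma~\ref{vel:ver}, exploiting the fact that the operator $(1+|\nabla_h|)$ acts on the horizontal Fourier side merely as multiplication by the scalar $(1+|\xi|)$, and thus commutes with every operation appearing in the integral representation \eqref{8ThswELzXU3X7Ebd1KdZ7v1rN3GiirRXGKWK099ovBM0FDJCvkopYNQ2a124} of $u_{3,\xi}/z$. First I would record that, at frequency $\xi$,
\[
\left(D^\alpha (1+|\nabla_h|)\frac{u_3}{z}\right)_\xi = (1+|\xi|)\left(D^\alpha \frac{u_3}{z}\right)_\xi,
\]
so the quantity to be bounded is exactly the one controlled in the proof of Lemma~\ref{vel:ver}, but with an additional factor $(1+|\xi|)$ carried through the integrals. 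Running that argument verbatim --- expanding $u_{3,\xi}/z$ through the Biot--Savart formula, invoking the elementary bound $|(1-e^{-2|\xi|\zz})/z| \les |\xi|$ together with the exponential gain \eqref{8ThswELzXU3X7Ebd1KdZ7v1rN3GiirRXGKWK099ovBM0FDJCvkopYNQ2a133}, and then the $\SL_\mu$ and $L^2(z\ge 1+\mu)$ estimates of \eqref{8ThswELzXU3X7Ebd1KdZ7v1rN3GiirRXGKWK099ovBM0FDJCvkopYNQ2a134}--\eqref{8ThswELzXU3X7Ebd1KdZ7v1rN3GiirRXGKWK099ovBM0FDJCvkopYNQ2a136} --- now produces $(1+|\xi|)|\xi|\,|\omega_{h,\xi}|$ in place of $|\xi|\,|\omega_{h,\xi}|$. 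Summing in $\xi$ then yields exactly the right-hand side $\lVert |\nabla_h|^{|\alpha_h|+1}(1+|\nabla_h|)\omega\rVert_{\YY_{\mu}} + \lVert |\nabla_h|^{|\alpha_h|+1}(1+|\nabla_h|)\omega\rVert_{S_{\mu}}$.

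For the case distinctions I would follow the same split as in Lemma~\ref{vel:ver}. The base case $\alpha=(0,0,0)$ and the horizontal cases $\alpha=(1,0,0),(0,1,0)$ are immediate, since each horizontal derivative merely supplies the extra factor $|\xi|^{|\alpha_h|}$ that is recorded as $|\nabla_h|^{|\alpha_h|}$ on the right. The only case using the structure of the system is the conormal vertical derivative $\alpha=(0,0,1)$, which I would treat through the incompressibility identity \eqref{8ThswELzXU3X7Ebd1KdZ7v1rN3GiirRXGKWK099ovBM0FDJCvkopYNQ2a:der:y:u2}, namely
\[
(1+|\xi|)\,z\partial_z\left(\frac{u_{3,\xi}}{z}\right) = -\ii (1+|\xi|)\,\xi\cdot u_{h,\xi} - (1+|\xi|)\frac{u_{3,\xi}}{z}.
\]
The first term is controlled by the horizontal-velocity estimate of Lemma~\ref{L:vel:hor}, in which the $(1+|\nabla_h|)$ weight is already built in, while the second is the $\alpha=(0,0,0)$ case established above.

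The proof is thus routine once Lemma~\ref{vel:ver} and Lemma~\ref{L:vel:hor} are available; the one point deserving care --- and the place I would expect any subtlety to hide --- is the vertical-derivative case, where I must check that the Dirichlet structure of $u_3/z$ leaves no boundary remainder. This is precisely the mechanism that made \eqref{8ThswELzXU3X7Ebd1KdZ7v1rN3GiirRXGKWK099ovBM0FDJCvkopYNQ2a:X:mu:u2} free of any $\alpha_3\norm{\cdot}_{X_\mu}$ term --- in contrast to the $\partial_z u_3$ bound \eqref{8ThswELzXU3X7Ebd1KdZ7v1rN3GiirRXGKWK099ovBM0FDJCvkopYNQ2a:X:mu:u4}, where such a term does appear --- and it persists after inserting the harmless scalar multiplier $(1+|\xi|)$. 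I anticipate no essential new difficulty beyond the bookkeeping of this extra frequency factor.
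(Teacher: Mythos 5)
Your proposal is correct and takes essentially the same route as the paper, which offers no separate argument for this lemma beyond the remark that it follows ``similarly as Lemma~\ref{vel:ver}'' --- that is, by rerunning the proof of \eqref{8ThswELzXU3X7Ebd1KdZ7v1rN3GiirRXGKWK099ovBM0FDJCvkopYNQ2a:X:mu:u2} with the scalar Fourier multiplier $(1+|\xi|)$ carried through, exactly as you do. Your handling of the $\alpha=(0,0,1)$ case via the incompressibility identity \eqref{8ThswELzXU3X7Ebd1KdZ7v1rN3GiirRXGKWK099ovBM0FDJCvkopYNQ2a:der:y:u2} and Lemma~\ref{L:vel:hor} mirrors the paper's treatment of the corresponding case in Lemma~\ref{vel:ver}.
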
 We also need an  estimate for the $Y_\mu$ norm of the velocity. \begin{lemma}  \label{Y:vel:hor} Let $\mu \in (0, \mu_0 - \gamma s)$. For the velocity $u$ and its derivatives,  the estimate \begin{align} \Vert D^{\alpha}\partial_{z}(1+ |\nabla_h|)u \Vert_{Y_\mu}   &\lesssim      \lVert |\nabla_h|^{|\alpha|}(1+ |\nabla_h|)\omega\rVert_{\YY_{\mu}}        +\lVert |\nabla_h|^{|\alpha|}(1+ |\nabla_h|)\omega\rVert_{S_{\mu}}  \nn&\indeq+   \alpha_3\Vert z\partial_{z} (1+ |\nabla_h|)\omega \Vert_{Y_\mu}   \llabel{EsY xG zj2T rfSR myZo4L m5 D mqN iZd acg GQ 0KRw QKGX g9o8v8 wm B fUu tCO 8ThswELzXU3X7Ebd1KdZ7v1rN3GiirRXGKWK099ovBM0FDJCvkopYNQ2a:Y:mu:u4} \end{align} holds for $|\alpha|\le1$. \end{lemma}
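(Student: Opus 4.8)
The plan is to reproduce, in the $L^1$-based $\YY_\mu$ norm, the argument used for the $X$-norm bound \eqref{8ThswELzXU3X7Ebd1KdZ7v1rN3GiirRXGKWK099ovBM0FDJCvkopYNQ2a:X:mu:u6} in Lemma~\ref{L14}, exploiting the fact that integrating the Biot--Savart kernel in $z$ recovers a factor of $|\xi|^{-1}$. First I would reduce to the four cases $\alpha\in\{(0,0,0),(1,0,0),(0,1,0),(0,0,1)\}$. The horizontal conormal derivatives $\partial_x,\partial_y$ act on the Fourier side by multiplication by $\ii\xi_1,\ii\xi_2$, so the cases $\alpha=(1,0,0),(0,1,0)$ follow from $\alpha=(0,0,0)$ after replacing $\omega$ by $|\nabla_h|\omega$; this accounts for the factor $|\nabla_h|^{|\alpha|}$ in the bound. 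Using incompressibility $\partial_z u_{3,\xi}=-\ii\xi\cdot u_{h,\xi}$ (cf.~\eqref{8ThswELzXU3X7Ebd1KdZ7v1rN3GiirRXGKWK099ovBM0FDJCvkopYNQ2a:der:y:u2}), the vertical contribution $D^\alpha\partial_z(1+|\nabla_h|)u_3$ equals, up to constants, $D^\alpha\nabla_h(1+|\nabla_h|)u_h$, so it is controlled by the very same horizontal-velocity estimate described next.

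The core is the estimate for $u_h$. I would insert the representation formulas \eqref{8ThswELzXU3X7Ebd1KdZ7v1rN3GiirRXGKWK099ovBM0FDJCvkopYNQ2a124}--\eqref{8ThswELzXU3X7Ebd1KdZ7v1rN3GiirRXGKWK099ovBM0FDJCvkopYNQ2a123} and differentiate in $z$, exactly as in \eqref{8ThswELzXU3X7Ebd1KdZ7v1rN3GiirRXGKWK099ovBM0FDJCvkopYNQ2a:der:y:u1} and \eqref{8ThswELzXU3X7Ebd1KdZ7v1rN3GiirRXGKWK099ovBM0FDJCvkopYNQ2a1:der:y:u1}. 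The volume pieces coming from $\partial_z\partial_h(-\Delta)^{-1}$ are $\xi$-weighted integrals of $\omega$ against the exponential kernels $e^{-|\xi||z-\zz|}$; to them I would apply the transfer inequality \eqref{8ThswELzXU3X7Ebd1KdZ7v1rN3GiirRXGKWK099ovBM0FDJCvkopYNQ2a133} to move the analytic weight from $z$ onto $\zz$, and then integrate $dz$ over the contour $\partial\Omega_\theta$. Fubini together with $\int e^{-(1-\ee)|\xi||z-\zz|}\,dz\lesssim|\xi|^{-1}$ then converts one of the $|\xi|$ factors produced by $\partial_z$ into the clean bound $\lVert|\nabla_h|^{|\alpha|}(1+|\nabla_h|)\omega\rVert_{\YY_\mu}$. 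This $|\xi|^{-1}$ gain is precisely what separates the $L^1$ estimate from the $L^\infty$ estimate \eqref{8ThswELzXU3X7Ebd1KdZ7v1rN3GiirRXGKWK099ovBM0FDJCvkopYNQ2a:X:mu:u6}, and is the reason the right-hand side here carries $|\nabla_h|^{|\alpha|}$ rather than $|\nabla_h|^{|\alpha|+1}$. As in \eqref{8ThswELzXU3X7Ebd1KdZ7v1rN3GiirRXGKWK099ovBM0FDJCvkopYNQ2a134}--\eqref{8ThswELzXU3X7Ebd1KdZ7v1rN3GiirRXGKWK099ovBM0FDJCvkopYNQ2a136}, the outer tail $\int_{1+\mu}^\infty$ is split off and estimated by Cauchy--Schwarz, yielding the $S_\mu$ contribution.

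The boundary terms are the delicate point, and the place where the $\alpha_3$ term on the right-hand side originates. When $\partial_z$ falls on a $\partial_3(-\Delta)^{-1}$ factor, the elliptic identity $\partial_z^2(-\Delta)^{-1}=-I+|\nabla_h|^2(-\Delta)^{-1}$ produces a diagonal term equal to $\omega$ evaluated at $z$, visible as the $-\omega_{2,\xi}(z)$ in \eqref{8ThswELzXU3X7Ebd1KdZ7v1rN3GiirRXGKWK099ovBM0FDJCvkopYNQ2a1:der:y:u1}. For $\alpha=(0,0,0),(1,0,0),(0,1,0)$ this contribution is harmless and folds into $\lVert|\nabla_h|^{|\alpha|}(1+|\nabla_h|)\omega\rVert_{\YY_\mu}$, which is why no $\alpha_3$ term appears there. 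For $\alpha=(0,0,1)$ the extra conormal derivative $z\partial_z$ lands on this diagonal term and yields $z\partial_z\omega$, exactly as in \eqref{der:y:2:u1}, giving the $\alpha_3\Vert z\partial_z(1+|\nabla_h|)\omega\Vert_{Y_\mu}$ term. The hard part will be the bookkeeping: carrying the prefactor $(1+|\nabla_h|)$ and the conormal derivative $z\partial_z$ through every kernel piece of \eqref{co:z:2:3} and \eqref{der:y:2:u1}---the several volume integrals with $e^{-2|\xi|z}$ and $e^{-2|\xi|\zz}$ factors together with the single diagonal term---and verifying that everything other than the diagonal term reassembles into $\lVert|\nabla_h|^{|\alpha|}(1+|\nabla_h|)\omega\rVert_{\YY_\mu}+\lVert|\nabla_h|^{|\alpha|}(1+|\nabla_h|)\omega\rVert_{S_\mu}$, the $|\xi|^{-1}$ gain always compensating the powers of $|\xi|$ generated by $\partial_z$. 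The estimates for $u_2$ follow from the symmetry between the two horizontal components, completing the proof.
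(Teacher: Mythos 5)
Your proposal is correct and follows essentially the same route as the paper: it rests on the explicit Biot--Savart representation formulas, the $|\xi|^{-1}$ gain from taking the $L^1$ (i.e.\ $Y_\mu$) norm of the exponential kernel to absorb the extra factor of $|\xi|$ created by $\partial_z$, the Cauchy--Schwarz treatment of the tail $\int_{1+\mu}^\infty$ producing the $S_\mu$ term, and the diagonal term $-\omega_{2,\xi}(z)$ from $\partial_z\partial_3(-\Delta)^{-1}$ generating the $\alpha_3\Vert z\partial_z(1+|\nabla_h|)\omega\Vert_{Y_\mu}$ contribution. The only cosmetic difference is that you reduce $u_3$ via incompressibility to a horizontal derivative of $u_h$, while the paper simply observes that every kernel term in $u_3$ already occurs in the analysis of $u_1$ or $u_2$; both are fine.
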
   \begin{proof}[Proof of Lemma~\ref{Y:vel:hor}] Since the estimates for $u_1$ and $u_2$ are similar, we only focus on the first component. Similarly to the proof of Lemma~\ref{L14}, from equation~\eqref{8ThswELzXU3X7Ebd1KdZ7v1rN3GiirRXGKWK099ovBM0FDJCvkopYNQ2a:der:y:u1}, we arrive at   \begin{align} \notag   \Vert D^{\alpha}(1+   |\nabla_h|)\partial_{z}(\partial_{2}(-\Delta)^{-1}\omega_3)_{\xi}   \Vert_{Y_\mu}     \lesssim   \Vert D^{\alpha}(1+ |\nabla_h|)\omega_3   \Vert_{Y_\mu}  +    \lVert zD^{\alpha}(1+ |\nabla_h|)\omega_3   \rVert_{L^2(z \geq 1+\mu)}   \end{align} for $0\le|\alpha|\le1$ with   $\alpha_3=0$, where we also used   \begin{align} \notag   \Vert   e^{|\xi|z}\Vert_{L^{1}} =     |\xi|^{-1}   \end{align} for $|\xi|\neq0.$ While for $\alpha_3=1$, we have the   representation formula   \begin{align} \notag   z\partial_{z}   &\partial_{z}\partial_{2}(-\Delta)^{-1}\omega_{3,\xi}   \nn&=   \frac{\ii}{2}z   \biggl( \int_0^z  e^{-|\xi|(z-\zz)}(1-e^{-2|\xi|    \zz}) |\xi|\xi_2 \omega_{3,\xi}(s, \zz) \,d\zz   -2 \xi_2   \omega_{3,\xi}(s, z)  \notag  \\&\indeq +  \int_z^\infty   e^{-|\xi|(\zz-z)}(1-e^{-2|\xi| z}) |\xi| \xi_2 \omega_{3,\xi}(s,   \zz) \,d\zz + 2 \int_z^\infty  e^{-|\xi|(\zz-z)}e^{-2|\xi| z} |\xi|\xi_2 \omega_{3, \xi}(s, \zz) \,d\zz \notag   \\&\indeq + 2  \int_z^\infty  e^{-|\xi|(\zz-z)}e^{-2|\xi| z} |\xi|\xi_2 \omega_{3, \xi}(s, \zz) \,d\zz - 4 \int_z^\infty e^{-|\xi|(\zz-z)}e^{-2|\xi| z} |\xi|\xi_2 \omega_{3, \xi}(s, \zz) \,d\zz \biggr), \notag   \end{align} from where we obtain \begin{align} \notag   \Vert z\partial_{z}(1+ |\nabla_h|)\partial_{z}(\partial_{2}(-\Delta)^{-1}\omega_3)_{\xi} \Vert_{Y_\mu}     \lesssim   \Vert |\nabla_h|(1+ |\nabla_h|)\omega_3 \Vert_{Y_\mu}  +    \lVert z |\nabla_h| (1+ |\nabla_h|)\omega_3 \rVert_{L^2(z \geq 1+\mu)}.   \end{align}  Next we consider the term   $\partial_{3}(-\Delta)^{-1}\omega_2$, which is slightly different   since there is a vorticity term in the $z$ derivative.  Note that   \begin{align}   \begin{split}    \partial_{z}(\partial_{3}(-\Delta)^{-1}\omega_2)_{\xi}=&   \frac{1}{2}\biggl(\int_0^z |\xi|e^{-|\xi|(z-\zz)}(1-e^{-2|\xi|   \zz})\omega_{2,\xi}(s, \zz) \,d\zz    \\&\indeq+  \int_z^\infty   |\xi| e^{-|\xi|(\zz-z)}(1+e^{-2|\xi| z})\omega_{2,\xi}(s, \zz)   \,d\zz    \\&\indeq+  \int_z^\infty   (-2|\xi|)e^{-|\xi|(\zz-z)}e^{-2|\xi| z}\omega_{2,\xi}(s, \zz)   \,d\zz\biggr)    -\omega_{2,\xi}(z).   \end{split} \notag   \end{align} It follows that   \begin{align}   \Vert D^{\alpha}(1+ |\nabla_h|)\partial_{z}(\partial_{3}(-\Delta)^{-1}\omega_2)_{\xi} \Vert_{Y_\mu}     \lesssim   \Vert D^{\alpha}(1+ |\nabla_h|)\omega_2 \Vert_{Y_\mu}  +    \lVert zD^{\alpha}(1+ |\nabla_h|)\omega_2 \rVert_{L^2(z \geq 1+\mu)} \notag   \end{align} for $0\le|\alpha|\le1$ with $\alpha_3=0$. At last, from \eqref{der:y:2:u1}, we obtain   \begin{align}   &\Vert D^{\alpha}(1+   |\nabla_h|)\partial_{z}(\partial_{3}(-\Delta)^{-1}\omega_2)_{\xi}    \Vert_{Y_\mu}     \lesssim   \Vert D^{\alpha}(1+ |\nabla_h|)\omega_2   \Vert_{Y_\mu}  +  \Vert |\nabla_h|(1+ |\nabla_h|)\omega_2   \Vert_{Y_\mu}    \nn&\indeq +    \lVert z |\nabla_h| (1+   |\nabla_h|)\omega_2 \rVert_{L^2(z \geq 1+\mu)} \notag   \end{align}   when $\alpha_3=1$. Note that every term in $u_3$ appears in either   $u_1$ or $u_2$. Hence the estimate of $u_3$ is essentially covered   in the previous cases concluding the proof.   \end{proof} Now we are   ready to prove Lemma~\ref{L09}. \begin{proof}[Proof of Lemma~\ref{L09}] By writing the nonlinear term as \eqref{8ThswELzXU3X7Ebd1KdZ7v1rN3GiirRXGKWK099ovBM0FDJCvkopYNQ2a:non} and using the definition of the $\YY_\mu$ norm,  we obtain, similarly to \eqref{8ThswELzXU3X7Ebd1KdZ7v1rN3GiirRXGKWK099ovBM0FDJCvkopYNQ2a137}, \begin{align*}   &\lVert (1+ |\nabla_h|) N(s)\rVert_{\YY_{\mu}} \nn&\indeq   \les \lVert (1+ |\nabla_h|)\omega_h\rVert_{\YY_{\mu}} \lVert (1+ |\nabla_h|)\nabla_hu\rVert_{\xx_{\mu}}    + \lVert (1+  |\nabla_h|)\omega_3\rVert_{\xx_{\mu}}    \lVert ((1+ |\nabla_h|)\partial_{z}u)_\xi \rVert_{\YY_{\mu}}    \nn&\indeq\quad + \lVert (1+ |\nabla_h|)\nabla_h\omega\rVert_{\YY_{\mu}}   \lVert (1+ |\nabla_h|)u_h\rVert_{\xx_{\mu}}   + \lVert (1+ |\nabla_h|)z\partial_{z}\omega\rVert_{\YY_{\mu}}  \left  \lVert (1+ |\nabla_h|) \left(\frac{u_3}{z}\right)  \right\rVert_{\xx_{\mu}}     \, .   \end{align*} By  Lemma~\ref{L:vel:hor}--~\ref{Y:vel:hor}, it follows   \begin{align*}  &\lVert (1+ |\nabla_h|) N(s)\rVert_{\YY_{\mu}}   \nn&\quad   \les  \lVert (1+ |\nabla_h|)\omega_h\rVert_{\YY_{\mu}}  (\lVert|\nabla_h|(1+ |\nabla_h|)  \omega\rVert_{\YY_{\mu}}+\lVert|\nabla_h| (1+  |\nabla_h|)\omega\rVert_{S_{\mu}} )   \nn&\indeq\quad   + \lVert (1+ |\nabla_h|)\omega_3\rVert_{\xx_{\mu}}    \left(\lVert (1+ |\nabla_h|)\omega\rVert_{\YY_{\mu}}        +\lVert   (1+ |\nabla_h|)\omega\rVert_{S_{\mu}}\right)   \nn&\indeq\quad   +   \lVert (1+ |\nabla_h|)\nabla_h\omega\rVert_{\YY_{\mu}}   \left(   \lVert(1+ |\nabla_h|) \omega\rVert_{\YY_{\mu}}+\lVert (1+    |\nabla_h|)\omega\rVert_{S_{\mu}}  \right)   \nn&\indeq\quad  +   \lVert (1+ |\nabla_h|)z\partial_{z}\omega\rVert_{\YY_{\mu}}  \left(   \lVert |\nabla_h|(1+ |\nabla_h|)\omega\rVert_{\YY_{\mu}}   +\lVert |\nabla_h|(1+ |\nabla_h|)\omega\rVert_{S_{\mu}} \right).   \end{align*} Reorganizing the terms on the right side further gives   \begin{align*} \notag   &\lVert (1+ |\nabla_h|) N(s)\rVert_{\YY_{\mu}}    \nn&\quad \les  (\lVert (1+ |\nabla_h|)\omega\rVert_{\YY_{\mu}} + \lVert (1+ |\nabla_h|)\omega_3\rVert_{\xx_{\mu}})   (\lVert (1+ |\nabla_h|^2) \omega\rVert_{\YY_{\mu}}+\lVert (1+ |\nabla_h|^2)\omega\rVert_{S_{\mu}} )  \nn&\indeq\quad  + \sum_{|\alpha|=1}\lVert (1+ |\nabla_h|) D^{\aa} \omega\rVert_{\YY_{\mu}}  \left( \lVert (1+ |\nabla_h|^2)\omega\rVert_{\YY_{\mu}}        +\lVert (1+ |\nabla_h|^2)\omega\rVert_{S_{\mu}} \right)    \, . \notag   \end{align*} Using the bounds in Lemma~\ref{L14} with $|\aa| = 0$,   we arrive at    \eqref{8ThswELzXU3X7Ebd1KdZ7v1rN3GiirRXGKWK099ovBM0FDJCvkopYNQ2a258}. We   next consider the case $|\alpha|=1$. In view of   \eqref{8ThswELzXU3X7Ebd1KdZ7v1rN3GiirRXGKWK099ovBM0FDJCvkopYNQ2a:non},   we use the Leibniz rule to obtain   \begin{align}     D^{\alpha}(1+   |\nabla_h|)N_\xi     =&   (1+|\xi|)\bigg((D^{\alpha}\omega_h\cdot\nabla_h u)_\xi   +(\omega_h\cdot\nabla_h D^{\alpha}u)_\xi   +(D^{\alpha}\omega_3\partial_z u)_\xi      \nn &   \quad+(\omega_3D^{\alpha}\partial_z u)_\xi    -    ( D^\alpha   u_h\cdot\nabla_{h}\omega)_\xi -(u_h\cdot \nabla_h   D^\alpha\omega)_\xi    \nn & \quad    -\left(D^\alpha\left(\frac{u_3}{z}\right)    z\partial_{z}\omega\right)_\xi -\left(\frac{u_3}{z}    D^\alpha(z\partial_{z})\omega\right)_\xi\bigg)    \, .    \llabel{cKc zz kx4U fhuA a8pYzW Vq 9 Sp6 CmA cZL Mx ceBX Dwug    sjWuii Gl v JDb 08h     8ThswELzXU3X7Ebd1KdZ7v1rN3GiirRXGKWK099ovBM0FDJCvkopYNQ2a:der:non}    \end{align} From a triangle inequality we have    \begin{equation}    \notag   e^{\ee(1+\mu-z)_+ |\xi|} \leq   e^{\ee(1+\mu-z)_+ |\eta|}    e^{\ee(1+\mu-z)_+ |\xi-\eta|}.   \end{equation}  Therefore, by the    definition of the $Y_{\mu}$ norm and Young's inequality in $\xi$    and $\eta$, it follows    \begin{align}   &\lVert  D^{\alpha} (1+ |\nabla_h|)    N(s)\rVert_{Y_{\mu}}     \les   \lVert D^{\alpha}(1+ |\nabla_h|)\omega\rVert_{Y_{\mu}}   \left\lVert   \nabla_h(1+ |\nabla_h|)u  \right \rVert_{\xx_{\mu}}        \notag   \\&\quad+  \lVert (1+ |\nabla_h|)\omega\rVert_{Y_{\mu}}   \left\lVert  \nabla_hD^{\alpha}(1+ |\nabla_h|)u  \right   \rVert_{\xx_{\mu}}  +  \lVert D^{\alpha}(1+   |\nabla_h|)\omega_3\rVert_{\xx_{\mu}} \Vert \partial_{z}(1+   |\nabla_h|)u \Vert_{Y_\mu}       \notag  \\&\quad+  \lVert (1+ |\nabla_h|)\omega_3\rVert_{\xx_{\mu}} \Vert  D^{\alpha}\partial_{z}(1+ |\nabla_h|)u \Vert_{Y_\mu}  +  \lVert   \nabla_{h}(1+ |\nabla_h|)\omega\rVert_{Y_{\mu}}  \left\lVert  D^{\aa}  (1+ |\nabla_h|)u_h  \right \rVert_{\xx_{\mu}}        \notag  \\&\quad  +\lVert \nabla_{h}D^\alpha  (1+ |\nabla_h|)\omega\rVert_{Y_{\mu}}  \left\lVert  (1+ |\nabla_h|)u_h  \right \rVert_{\xx_{\mu}}    +\lVert  z\partial_{z}(1+ |\nabla_h|)\omega\rVert_{Y_{\mu}}    \left\lVert  D^{\alpha} \left((1+ |\nabla_h|)\frac{u_3}{z}\right)  \right  \rVert_{\xx_{\mu}}   \notag  \\&\quad   +\lVert D^\alpha  z\partial_{z}  (1+ |\nabla_h|)\omega\rVert_{Y_{\mu}}    \left\lVert  (1+ |\nabla_h|)\frac{u_3}{z}  \right \rVert_{\xx_{\mu}}    \, . \notag   \end{align} Using Lemmas~\ref{L14}, ~\ref{L:vel:hor},   ~\ref{L:vel:ver}, and~\ref{Y:vel:hor} gives   \begin{align}   \sum_{|\alpha|=1}&\lVert     D^{\alpha} (1+ |\nabla_h|)   N(s)\rVert_{Y_{\mu}}     \les  \sum_{|\alpha|\le1}\lVert   D^{\alpha}(1+ |\nabla_h|)\omega\rVert_{Y_{\mu}}       \notag  \\&  \times\bigg( \sum_{|\alpha|=1}\lVert(1+|\nabla_h|^{2}) D^{\aa} \omega\rVert_{\YY_{\mu}}+\lVert (1+|\nabla_h|^{2})(1+ |\nabla_h|) \omega\rVert_{S_{\mu}}    +     \norm{ (1+ |\nabla_h|) \omega}_{X_\mu}  \bigg)   \notag  \\&   +  \lVert (1+ |\nabla_h|)\omega\rVert_{Y_{\mu}}   \bigg(   \lVert|\nabla_h|^{2} (1+    |\nabla_h|)\omega\rVert_{\YY_{\mu}}+\lVert|\nabla_h|^{2} (1+   |\nabla_h|)\omega\rVert_{S_{\mu}}     \notag  \\&   + \norm{   |\nabla_h|(1+ |\nabla_h|)\omega}_{X_\mu} + \sum_{|\alpha|=1} \lVert   D^\alpha z\partial_{z}  (1+ |\nabla_h|)\omega\rVert_{Y_{\mu}} \bigg)   \notag  \\& +  \lVert (1+ |\nabla_h|)\omega_3\rVert_{\xx_{\mu}}   \left( \sum_{|\alpha|=1} \lVert D^{\aa} (1+   |\nabla_h|)\omega\rVert_{\YY_{\mu}}        +\lVert |\nabla_h| (1+   |\nabla_h|)\omega\rVert_{S_{\mu}}     \right)   \notag  \\&   +  \sum_{|\alpha|=1}\lVert D^{\alpha}(1+  |\nabla_h|)\omega_3\rVert_{\xx_{\mu}} \Vert (1+ |\nabla_h|)^2 \omega  \Vert_{Y_\mu}    \,  \notag   \end{align} concluding the  proof. \end{proof} \ghoahbkseibhseir Finally we consider the Sobolev  norm estimates for the nonlinear term. \cole \begin{lemma} \label{L04} Let $\mu \in (0, \mu_0 - \gamma s)$ be arbitrary.  We have \begin{align} \label{8ThswELzXU3X7Ebd1KdZ7v1rN3GiirRXGKWK099ovBM0FDJCvkopYNQ2a164}  \lVert (1+& |\nabla_h|) N(s)\rVert_{S_\mu}  \nn& \lesssim \sum_{|\alpha|\le1}\left( \norm{D^\aa (1+ |\nabla_h|) \omega}_{\YY_\mu} + \norm{\nabla^\aa(1+ |\nabla_h|)\omega}_{S_\mu} \right)   \sum_{|\aa|\le1}   \lVert \nabla^\aa (1+ |\nabla_h|)\omega\rVert_{S_\mu}   \end{align} and     \begin{align} \sum_{|\aa|=1}  &\lVert \nabla^{\aa}(1+ |\nabla_h|) N (s)\rVert_{S_\mu} \nn & \les \sum_{|\aa|\le2}\norm{\nabla^{\aa} (1+ |\nabla_h|)\omega}_{S_\mu}  \sum_{|\aa|\le2}(\norm{D^\aa (1+ |\nabla_h|)\omega}_{\YY_\mu} + \norm{\nabla^\aa (1+ |\nabla_h|)\omega}_{S_\mu}  ). \label{8ThswELzXU3X7Ebd1KdZ7v1rN3GiirRXGKWK099ovBM0FDJCvkopYNQ2a155} \end{align} \end{lemma}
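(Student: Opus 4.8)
The plan is to estimate the $S_\mu$ norm of the nonlinearity by exploiting that this norm only measures the solution in the far region $z \ge 1+\mu$, where the boundary layer plays no role. I would start from the rough decomposition
\[
N_\xi = (\omega_h\cdot\nabla_h u)_\xi + (\omega_3\partial_z u)_\xi - (u_h\cdot\nabla_h\omega)_\xi - \left(\frac{u_3}{z}z\partial_z\omega\right)_\xi,
\]
the same one already used for the $\YY_\mu$ estimate, and multiply through by $(1+|\nabla_h|)$, which acts as $(1+|\xi|)$ on the Fourier side. Each bilinear term becomes a convolution in $\xi$; I would split the frequency weight via $(1+|\xi|)\le (1+|\xi-\eta|)(1+|\eta|)$ and use Young's inequality in the $\ell^1(\xi)$ sense to distribute the extra horizontal derivative onto the two factors.

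The central observation is that on $z\ge 1+\mu$ one has $(1+\mu-z)_+ = 0$ and $w(z)=1$, so the analytic $\xx_\mu$ norm of the velocity controls its pointwise supremum in $z$. Hence for every bilinear term I would use a H\"older $L^\infty_z \cdot L^2_z$ splitting: the velocity factor (and its derivatives) is placed in $L^\infty_z$ and bounded through its $\xx_\mu$ norm, while the vorticity factor (and its derivatives) is placed in $L^2_z$ and bounded through the $S_\mu$ norm. For the four terms this pairs, respectively, $\nabla_h u$ with $\omega_h$, $\partial_z u$ with $\omega_3$, $u_h$ with $\nabla_h\omega$, and $u_3/z$ with $z\partial_z\omega$. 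Feeding in the velocity estimates already established --- Lemma~\ref{L14} for $\nabla_h u$ and $\partial_z u$, Lemma~\ref{vel:ver} and Lemma~\ref{L:vel:ver} for $u_3/z$, and Lemma~\ref{L:vel:hor} for the $(1+|\nabla_h|)$ versions --- converts the velocity norms into $\YY_\mu$ and $S_\mu$ norms of $\omega$ carrying at most one additional horizontal derivative. Collecting the resulting products gives exactly the $\sum_{|\alpha|\le1}$ structure of \eqref{8ThswELzXU3X7Ebd1KdZ7v1rN3GiirRXGKWK099ovBM0FDJCvkopYNQ2a164}.

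For the first-order bound \eqref{8ThswELzXU3X7Ebd1KdZ7v1rN3GiirRXGKWK099ovBM0FDJCvkopYNQ2a155}, I would apply $\nabla^\alpha$ with $|\alpha|=1$ and expand by the Leibniz rule so that the single derivative lands on one factor at a time. When it falls on a vorticity factor it raises the count to $|\alpha|\le 2$ in the $S_\mu$ norm; when it falls on a velocity factor it is absorbed by the velocity lemmas, producing $\omega$-norms with up to two horizontal derivatives. Repeating the $L^\infty_z \cdot L^2_z$ splitting and summing over the four terms then yields \eqref{8ThswELzXU3X7Ebd1KdZ7v1rN3GiirRXGKWK099ovBM0FDJCvkopYNQ2a155}, provided one verifies that no term ever requires more than two derivatives of $\omega$.

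I expect the main obstacle to be precisely this derivative-counting in the vertical direction. When the $\partial_z$ coming from $\nabla^\alpha$ hits $\partial_z u$ in $\omega_3\partial_z u$, or the conormal factor $z\partial_z\omega$ in $u_3\partial_z\omega$, one must avoid generating either a third derivative of $\omega$ or an uncontrolled power of $z$. For the velocity factor I would use incompressibility $\partial_z u_{3,\xi} = -\ii\xi\cdot u_{h,\xi}$ together with the representation formulas of Lemma~\ref{der:str} to trade each vertical derivative of $u$ for a horizontal derivative of $\omega$; for the vorticity factor, the restriction to $z\ge 1+\mu$ means the weight $z$ is bounded below and comparable to the conormal weight, so $\partial_z(z\partial_z\omega)$ stays within $\nabla^\alpha\omega$, $|\alpha|=2$, in the $S_\mu$ norm. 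Ensuring these trades respect the two-derivative budget on the right-hand sides is the crux of the argument.
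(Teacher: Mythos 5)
Your overall architecture --- H\"older in $z$ pairing an $L^\infty_z$ velocity factor against an $L^2_z$ vorticity factor, Young's inequality in $\xi$ with the splitting $(1+|\xi|)\le(1+|\xi-\eta|)(1+|\eta|)$, and a Leibniz expansion for the first-order bound --- is the same as the paper's. But the step you call the ``central observation'' does not hold: the $\xx_\mu$ norm is a supremum over the complex region $\Omega_\mu$, which by definition is contained in $\{0\le\Re z\le 1+\mu\}$, whereas the $S_\mu$ norm lives entirely on the complementary region $z\ge 1+\mu$. So the $\xx_\mu$ norms of $u$, $\nabla_h u$, $\partial_z u$, $u_3/z$ say nothing about the pointwise values of these quantities where the $S_\mu$ integrand is supported, and Lemmas~\ref{L14}, \ref{vel:ver}, \ref{L:vel:hor}, \ref{L:vel:ver} cannot be ``fed in'' as you propose. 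The paper instead derives fresh exterior bounds directly from the Biot--Savart representation of Lemma~\ref{der:str}, e.g.\ $\sum_\xi\norm{u_\xi}_{L^\infty(z\ge1+\mu)}\les\sum_\xi\int_0^\infty|\omega_\xi|\,dz\les\norm{\omega}_{\YY_\mu}+\norm{\omega}_{S_\mu}$ as in \eqref{l:inf:u} and \eqref{l:inf:der:u1}. The integrals involved are of the same type as in the velocity lemmas, so the repair is routine, but as written your argument invokes estimates on a region where they do not apply.

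A second, smaller issue concerns the derivative count for $\nabla^2 u$ needed in \eqref{8ThswELzXU3X7Ebd1KdZ7v1rN3GiirRXGKWK099ovBM0FDJCvkopYNQ2a155}. Incompressibility only converts $\partial_z u_3$ into horizontal derivatives of $u_h$; for the horizontal components, differentiating the Biot--Savart kernel twice in $z$ produces the local term $\partial_z\omega_{h,\xi}(z)$ (cf.\ the boundary term $-\omega_{2,\xi}(z)$ in \eqref{8ThswELzXU3X7Ebd1KdZ7v1rN3GiirRXGKWK099ovBM0FDJCvkopYNQ2a1:der:y:u1}), which on $z\ge1+\mu$ must be estimated in $L^\infty_z$ and therefore costs $\norm{\nabla^2\omega}_{S_\mu}$ after a one-dimensional Sobolev embedding; this is exactly where the two-derivative budget on the right of \eqref{8ThswELzXU3X7Ebd1KdZ7v1rN3GiirRXGKWK099ovBM0FDJCvkopYNQ2a155} is spent, and your sketch does not account for it. Finally, note that the paper works with $z\nabla^{\alpha}N_\xi=(z\nabla^{\alpha}\omega\cdot\nabla u)_\xi+(z\omega\cdot\nabla^{\alpha}\nabla u)_\xi-(\nabla^{\alpha}u\cdot z\nabla\omega)_\xi-(u\cdot z\nabla^{\alpha}\nabla\omega)_\xi$, i.e.\ with full derivatives $\nabla^{\alpha}$ and the weight $z$ placed on the vorticity factors, dispensing with the conormal splitting $\frac{u_3}{z}\,z\partial_z\omega$ altogether since $z$ is bounded below on the exterior region; this is consistent with, and simpler than, the decomposition you start from.
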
 \colb \ghoahbkseibhseir \begin{proof} We first note that $ z (\omega \cdot \nabla u-u\cdot \nabla \omega) = z\omega\cdot \nabla u-u\cdot z \nabla \omega $. To prove the inequality~\eqref{8ThswELzXU3X7Ebd1KdZ7v1rN3GiirRXGKWK099ovBM0FDJCvkopYNQ2a164}, we first need several estimates for the velocity.  From H\"older's  inequality in $z$ and Young's inequality in $\xi$ we deduce \begin{align} \sum_\xi   \lVert u_{\xi}\rVert_{L^\infty( z\ge1+\mu  )} \les \sum_\xi  \int_0^\infty |\omega_\xi (z)| \, dz   \les \norm{\omega}_{\YY_\mu} + \norm{\omega}_{S_\mu}  \, . \label{l:inf:u} \end{align} The estimates of the horizontal derivatives $\nabla_h u$ are obtained similarly with an extra $|\xi|$. In view of Lemma~\ref{der:str} and the equations~\eqref{8ThswELzXU3X7Ebd1KdZ7v1rN3GiirRXGKWK099ovBM0FDJCvkopYNQ2a:der:y:u1} with \eqref{8ThswELzXU3X7Ebd1KdZ7v1rN3GiirRXGKWK099ovBM0FDJCvkopYNQ2a1:der:y:u1}, there is a term $\omega$ coming from taking the $z$ derivative of the velocity. Using again H\"older's inequality and Young's inequality, we arrive at \begin{align} \sum_\xi   \lVert (\partial_{z} u)_{\xi}\rVert_{L^\infty( z\ge1+\mu  )}&  \les \sum_\xi  \left( \int_0^\infty |\xi\omega_\xi (z)| \, dz  + \lVert \omega_{\xi}\rVert_{L^\infty( z\ge1+\mu  )}\right) \les \norm{\nabla_h  \omega}_{\YY_\mu} + \norm{\nabla \omega}_{S_\mu}  \,  \notag \end{align} which together with~\eqref{l:inf:u} imply \begin{align} \sum_\xi   \lVert (\nabla u)_{\xi}\rVert_{L^\infty( z\ge1+\mu  )}&  \les \sum_\xi  \left( \int_0^\infty |\xi\omega_\xi (z)| \, dz  + \lVert \omega_{\xi}\rVert_{L^\infty( z\ge1+\mu  )}\right) \les \norm{\nabla_h \omega}_{\YY_\mu} + \norm{\nabla \omega}_{S_\mu}  \, . \label{l:inf:der:u1} \end{align} Applying the above estimates about the velocity to the nonlinearity leads to   \begin{align}  \lVert N(s)\rVert_{S_\mu}   \lesssim   \sum_{|\alpha|\le1}\left( \norm{ D^\aa \omega}_{\YY_\mu} + \norm{\nabla^\aa\omega}_{S_\mu}  \right) \sum_{|\aa|\le1}   \lVert \nabla^\aa \omega\rVert_{S_\mu}. \notag   \end{align} Using the triangle inequality  $   |\xi| \le |\xi-\eta|   + |\eta| $, we further obtain   \begin{align}  \lVert (1+&   |\nabla_h|) N(s)\rVert_{S_\mu} \lesssim   \sum_{|\alpha|\le1}\left(   \norm{ D^\aa (1+ |\nabla_h|) \omega}_{\YY_\mu} + \norm{\nabla^\aa(1+   |\nabla_h|)\omega}_{S_\mu}  \right)   \sum_{|\aa|\le1}   \lVert   \nabla^\aa (1+ |\nabla_h|)\omega\rVert_{S_\mu}. \notag    \end{align} Next we consider   \eqref{8ThswELzXU3X7Ebd1KdZ7v1rN3GiirRXGKWK099ovBM0FDJCvkopYNQ2a155}. When   $|\aa|=1$, by the Leibniz rule we have    \begin{align}   z\nabla^{\alpha}N_\xi     &=    (z\nabla^{\alpha}\omega\cdot\nabla   u)_\xi +(z\omega\cdot \nabla^{\alpha} \nabla u)_\xi     -    (   \nabla^\alpha u\cdot z\nabla\omega)_\xi -(u\cdot   z\nabla^\alpha\nabla\omega)_\xi    \, .   \llabel{BOV C1 pni6 4TTq   Opzezq ZB J y5o KS8 BhH sd nKkH gnZl UCm7j0 Iv Y jQE 7JN   8ThswELzXU3X7Ebd1KdZ7v1rN3GiirRXGKWK099ovBM0FDJCvkopYNQ2a:der:non}   \end{align} Therefore, from H\"older's inequality in $z$ and Young's inequality in $\xi$ we deduce \begin{align} \label{l:inf:der:non1} \norm{\nabla^{\aa} N(s)}_{S_\mu} &\les \norm{\nabla^{\aa} \omega}_{S_\mu} \sum_\xi \norm{ (\nabla u)_{\xi}}_{L^\infty(z\geq 1+\mu)} + \norm{ \omega}_{S_\mu} \sum_\xi \norm{(\nabla^{\aa}\nabla u)_{\xi}}_{L^\infty(z\geq 1+\mu)} \notag\\ &\indeq   + \norm{\nabla \omega}_{S_\mu} \sum_\xi \norm{ (\nabla^{\aa} u)_{\xi}}_{L^\infty(z\geq 1+\mu)} +  \norm{\nabla^{\aa}\nabla \omega}_{S_\mu} \sum_\xi \norm{u_{\xi}}_{L^\infty(z\geq 1+\mu)}   \, . \notag \end{align} Having both~\eqref{l:inf:u} and \eqref{l:inf:der:u1} established, we only need to think about the term $\norm{(\nabla^{\aa}\nabla u)_{\xi}}_{L^\infty(z\geq 1+\mu)} $. Note that the $L^\infty$ norms of both $\nabla_h\nabla u$ and $\nabla \nabla_h u$ are both essentially covered in the treatment for  $\sum_\xi   \lVert (\nabla u)_{\xi}\rVert_{L^\infty( z\ge1+\mu  )}$ (by adding an extra $|\xi|$). So we focus on the term $\partial_{z}\partial_{z}u$. From Lemma~\ref{der:str} and the equations~\eqref{8ThswELzXU3X7Ebd1KdZ7v1rN3GiirRXGKWK099ovBM0FDJCvkopYNQ2a:der:y:u1} with \eqref{8ThswELzXU3X7Ebd1KdZ7v1rN3GiirRXGKWK099ovBM0FDJCvkopYNQ2a1:der:y:u1}, we have  \begin{align} \sum_\xi   \lVert (\partial_{z}\partial_{z} u)_{\xi}\rVert_{L^\infty( z\ge1+\mu  )}&  \les \sum_\xi  \left( \int_0^\infty ||\xi|^2\omega_\xi  (z)| \, dz  + \lVert \partial_{z}\omega_{\xi}\rVert_{L^\infty( z\ge1+\mu  )}\right) \nn& \les \norm{\nabla_h^2 \omega}_{\YY_\mu} + \norm{\nabla^2 \omega}_{S_\mu} . \,  \end{align} In summary, we obtain \begin{align} \label{l:inf:der:u2} \sum_\xi   \lVert (\nabla^2 u)_{\xi}\rVert_{L^\infty( z\ge1+\mu  )}&  \les \sum_\xi  \left( \int_0^\infty ||\xi|^2\omega_\xi (z)| \, dz  + \lVert \partial_{z}\omega_{\xi}\rVert_{L^\infty( z\ge1+\mu  )}\right) \nn& \les \norm{\nabla_h^2 \omega}_{\YY_\mu} + \norm{\nabla^2 \omega}_{S_\mu} . \,  \end{align} Combining~\eqref{l:inf:u}, \eqref{l:inf:der:u1}, \eqref{l:inf:der:u2}, and \eqref{l:inf:der:non1} gives \begin{align} \norm{\nabla^{\aa} N(s)}_{S_\mu}  \les \sum_{|\aa|\le2}\norm{\nabla^{\aa} \omega}_{S_\mu} \sum_{|\aa|\le2}(\norm{D^\aa \omega}_{\YY_\mu} + \norm{\nabla^\aa \omega}_{S_\mu}  ) \, . \notag \end{align} The inequality~\eqref{8ThswELzXU3X7Ebd1KdZ7v1rN3GiirRXGKWK099ovBM0FDJCvkopYNQ2a155} follows by applying a triangle inequality and repeating the above process, and thus the details are omitted. \end{proof} \ghoahbkseibhseir \ghoahbkseibhseir \section{Main estimates for the analytic norms} \label{X:Y:est} \ghoahbkseibhseir In  this section,  we give two lemmas concerning the estimates of the analytic norms in term of the analytic with Sobolev norm of the nonlinearity and only comment about the proof at the end.  The first lemma gives the  $\XX_\mu$-norm estimate. \begin{lemma}[\bf Main $X$ norm estimate] \label{lem:main:X} \cole The nonlinear term in \eqref{kernel:est} is bounded in the $X_\mu$ norm as \begin{align} &(\mu_0-\mu-\gamma s) \sum_{|\aa|= 2} \left\lVert D^{\aa}\int_0^\infty G(t-s, z, \zz)N(s, \zz) \,d\zz\right\rVert_{X_\mu}  \notag\\ &\quad  + \sum_{|\aa| \leq 1} \left\lVert D^{\aa}\int_0^\infty G(t-s, z, \zz)N(s, \zz) \,d\zz\right\rVert_{X_{\mu_1}}  \notag\\ &\quad \quad  \les \sum_{|\aa| \leq 1}\lVert D^{\aa} N(s)\rVert_{X_{\mu_2}}    + \frac{1}{(\mu_0-\mu-\gamma s)^{1/2}}   \sum_{|\aa| \leq 1} \lVert \nabla^{\aa} N(s)\rVert_{S_{\mu_2}}    \, \label{8ThswELzXU3X7Ebd1KdZ7v1rN3GiirRXGKWK099ovBM0FDJCvkopYNQ2a701} \end{align} where   $      \mu_1 = \mu+(\mu_0-\mu-\gamma s)/4$ and $      \mu_2 = \mu+(\mu_0-\mu-\gamma s)/2.$ The $X_\mu$ norm of the trace kernel term  in \eqref{kernel:est} is estimated as \begin{align} &(\mu_0-\mu-\gamma s) \sum_{|\aa|= 2} \norm{  D^{\aa} G(t-s, z,0)B(s)}_{X_\mu} + \sum_{|\aa| \leq 1 } \norm{  D^{\aa} G(t-s, z,0)B(s)}_{X_{\mu_1}} \notag\\ &\quad \quad  \les \frac{1}{\sqrt{t-s}}  \sum_{|\aa_h|\leq1} \left( \lVert\nabla_h^{\aa_h}N(s)\rVert_{\YY_{\mu_1}} + \lVert \nabla_h^{\aa_h}  N(s)\rVert_{S_{\mu_1}} \right)  + \sum_{|\aa_h|\leq 1} \norm{\nabla_h^{\aa_h}  N(s)}_{X_{\mu_1}} \, .    \label{8ThswELzXU3X7Ebd1KdZ7v1rN3GiirRXGKWK099ovBM0FDJCvkopYNQ2a703}    \end{align} Lastly, the initial datum term in \eqref{kernel:est}    may be bounded in the $X_\mu$ norm as \begin{align}    &    \sum_{|\aa|\leq 2} \norm{  D^{\aa} \int_0^\infty G(t,    z,\zz)\omega_{0}(\zz) \,d\zz}_{X_\mu}    \les    \sum_{|\aa|\leq    2}\lVert D^{\aa} \omega_0\rVert_{X_{\mu}}   +    \sum_{|\aa|\leq 2}    \sum_{\xi}      \lVert  \xi^{\aa_h} \partial_{\zz}^{\aa_3}    \omega_{0\xi}\rVert_{L^\infty(z\ge 1+\mu)}     \,.       \llabel{9fd ED ddys 3y1x 52pbiG Lc a 71j G3e uli Ce uzv2 R40Q 50JZUB uK d U3m May 8ThswELzXU3X7Ebd1KdZ7v1rN3GiirRXGKWK099ovBM0FDJCvkopYNQ2a704} \end{align} \colb \end{lemma}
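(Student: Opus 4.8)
\emph{Overall strategy.} The plan is to bound separately the three terms in the Duhamel representation~\eqref{kernel:est} — the interior nonlinear term, the boundary trace term, and the initial-datum term — each frequency by frequency in $\xi$, summing at the end with Young's inequality exactly as in the proof of Lemma~\ref{L01}. For the horizontal components I use the splitting $G_{1,\xi} = \tilde H_\xi + \tilde R_\xi$ of Lemma~\ref{Green:fun}, and for the vertical component the explicit Dirichlet kernel $G_{2,\xi}$ from~\eqref{gre:fun}. The analytic weight $e^{\ee(1+\mu-z)_+|\xi|}$ is transported through the $\zz$-integration by the pointwise bound~\eqref{8ThswELzXU3X7Ebd1KdZ7v1rN3GiirRXGKWK099ovBM0FDJCvkopYNQ2a133}, and the gain of analyticity radius in passing from $\mu_2$ (or $\mu_1$) down to $\mu$ supplies an exponential factor $e^{-\ee(\mu_2-\mu)|\xi|}$. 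This factor is the only source of inverse powers of $\mu_0-\mu-\gamma s$, through the elementary inequality $|\xi|^{k} e^{-c(\mu_2-\mu)|\xi|} \les (\mu_2-\mu)^{-k}$; since $\mu_2-\mu = (\mu_0-\mu-\gamma s)/2$, each horizontal derivative traded for analyticity costs one such power.

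\emph{The interior term~\eqref{8ThswELzXU3X7Ebd1KdZ7v1rN3GiirRXGKWK099ovBM0FDJCvkopYNQ2a701}.} I split the $\zz$-range into the analytic region $\zz \le 1+\mu$ and the Sobolev far field $\zz \ge 1+\mu$. On the heat part, convolution against $\tilde H_\xi$ preserves the weighted $\ZL_{\mu,\nu}$ structure; when a conormal derivative $D^\alpha$ with $\alpha_3 \neq 0$ falls on $\tilde H_\xi$ it produces the $z$-moments $\int |(z\partial_z)^{\alpha_3}\tilde H_\xi|\,d\zz$, and the weight $w = \max(\sqrt\nu, z)$ absorbs the $O(1/\sqrt\nu)$ boundary growth. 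The residual part is controlled by Remark~\ref{R01}: the $R^{(1)}$ contribution, with its prefactor $b = |\xi| + 1/\sqrt\nu$, is integrated in $\zz$ against $e^{-\theta_0 b(z+\zz)/2}$, the decay absorbing one power of $b$ and the weight a second, while the $R^{(2)}$ Gaussian in $(z+\zz)^2/\nu(t-s)$ is treated like $\tilde H_\xi$. The far-field mass of $N$ enters only through $L^2$ norms, giving the $S_{\mu_2}$ term; the transfer of this interior mass across the overlap into the near-boundary region costs the factor $(\mu_0-\mu-\gamma s)^{-1/2}$ via a Cauchy--Schwarz step. Finally, the two conormal derivatives in the $|\alpha|=2$ terms are gained at the cost of exactly the factor $(\mu_0-\mu-\gamma s)$ placed on the left, matching the definition~\eqref{nor:X:mu} of $X_\mu$ inside the $X(t)$ norm.

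\emph{The trace term~\eqref{8ThswELzXU3X7Ebd1KdZ7v1rN3GiirRXGKWK099ovBM0FDJCvkopYNQ2a703}.} Here I first rewrite the boundary forcing as $B_{h,\xi}(s) = (\partial_z(-\Delta)^{-1}N_h)_\xi|_{z=0}$, and apply the Biot--Savart representation of Lemma~\ref{der:str} evaluated at $z=0$: because the $z=0$ trace of $\partial_z(-\Delta)^{-1}$ is an $L^1$-in-$\zz$ (interior) and $L^2$ (far field) operation, $B$ is bounded by the $\YY_{\mu_1}$ and $S_{\mu_1}$ norms of $\nabla_h^{\alpha_h} N_h$ together with an $X_{\mu_1}$ contribution. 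The kernel $G(t-s,z,0)$ concentrates at the boundary on the scales $\sqrt{\nu(t-s)}$ and $1/b$; its conormal derivatives decay as $e^{-\theta_0 b z/2}$ plus the Gaussian, and the time-singular factor $1/\sqrt{t-s}$ is exactly the size of the heat-trace kernel evaluated at $\zz=0$. Transporting the analytic weight and summing in $\xi$ as above yields~\eqref{8ThswELzXU3X7Ebd1KdZ7v1rN3GiirRXGKWK099ovBM0FDJCvkopYNQ2a703}.

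\emph{The initial-datum term and the main obstacle.} For the initial-datum term the Neumann and Dirichlet heat semigroups are bounded on the analytic $X_\mu$ norm, so convolution against $\tilde H_\xi$ and $G_{2,\xi}$ reproduces $\norm{D^\alpha\omega_0}_{X_\mu}$ in the analytic region, while the contribution of $\zz \ge 1+\mu$ is bounded by the $L^\infty(z \ge 1+\mu)$ norms of $\xi^{\alpha_h}\partial_\zz^{\alpha_3}\omega_{0\xi}$; the residual kernel is absorbed once more using Remark~\ref{R01}. I expect the main difficulty to be the simultaneous bookkeeping of three competing scales: the $O(1/\sqrt\nu)$ boundary growth of the vorticity (tracked by $w$ and by $b$), the time singularities coming from the heat moments and from the trace kernel, and the analyticity loss $\mu_2 \to \mu$. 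The delicate point is to verify that every power of $\nu$ cancels — in particular that the $b$-prefactors of $R^{(1)}$ are fully compensated by the exponential decay and the weight — and that the surviving powers of $\mu_0-\mu-\gamma s$ and of $t-s$ land exactly as stated, so that the time integration carried out later in the bootstrap converges. The hardest single case is the second conormal derivative $(z\partial_z)^2$ acting on the residual kernel, where the extra boundary term generated by commuting $z\partial_z$ past the kernel must be tracked carefully to avoid an uncompensated power of $b$.
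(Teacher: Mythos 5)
Your overall plan coincides with the route the paper itself intends: the paper gives no detailed proof of Lemma~\ref{lem:main:X}, deferring entirely to the 2D argument of \cite{KukVicWan19}, and the skeleton you describe --- treating the three Duhamel pieces separately, decomposing $G_{1,\xi}=\tilde H_\xi+\tilde R_\xi$ via Lemma~\ref{Green:fun} and the explicit $G_{2,\xi}$, transporting the weight $e^{\ee(1+\mu-z)_+|\xi|}$ by the triangle inequality, paying for horizontal derivatives with the loss of analyticity radius $\mu_2-\mu\sim\mu_0-\mu-\gamma s$, controlling $\tilde R_\xi$ through Remark~\ref{R01} together with Proposition~\ref{wei}, bounding the trace forcing $B_{h,\xi}=(\partial_z(-\Delta)^{-1}N_h)_\xi|_{z=0}$ by $L^1$/$L^2$ norms of $N$ via Lemma~\ref{der:str}, and extracting $(\mu_0-\mu-\gamma s)^{-1/2}$ in front of $S_{\mu_2}$ from Cauchy--Schwarz combined with Gaussian decay across the gap --- is exactly that argument's skeleton.

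There is, however, one genuine gap in the mechanism you propose for the interior term. You assert that when a conormal derivative with $\alpha_3\neq0$ ``falls on $\tilde H_\xi$'' it is controlled by the moments $\int|(z\partial_z)^{\alpha_3}\tilde H_\xi|\,d\zz$, with the weight $w$ absorbing the boundary growth. This cannot close: one has $\int_0^\infty |z\partial_z\tilde H_\xi(t-s,z,\zz)|\,d\zz\sim z/\sqrt{\nu(t-s)}$, which for $z\sim1$ (where $w(z)=1$) is of size $\nu^{-1/2}(t-s)^{-1/2}$ and is compensated neither by $w(z)$ nor by the weight $1/w(\zz)$ carried by $N_h$; after the time integration the resulting bound degenerates like $\nu^{-1/2}$ as $\nu\to0$. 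The correct mechanism --- and the reason the right-hand side of \eqref{8ThswELzXU3X7Ebd1KdZ7v1rN3GiirRXGKWK099ovBM0FDJCvkopYNQ2a701} contains $\lVert D^{\aa}N\rVert_{X_{\mu_2}}$ at all --- is to write $\partial_z\tilde H_\xi=\mp\partial_{\zz}\tilde H_\xi$ on the two Gaussian pieces, integrate by parts in $\zz$ to move the derivative onto $N$, and then split $z=\zz+(z-\zz)$ so that the main term is $(\zz\partial_{\zz})N$ convolved with $\tilde H_\xi$, while the commutator carries the harmless factor $(z-\zz)\tilde H_\xi\les \sqrt{\nu(t-s)}\,e^{-(z-\zz)^2/8\nu(t-s)}$ and the boundary term at $\zz=0$ is tracked separately; this device must be applied twice for $\aa=(0,0,2)$. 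A smaller inaccuracy: you claim the factor $e^{-\ee(\mu_2-\mu)|\xi|}$ is the \emph{only} source of inverse powers of $\mu_0-\mu-\gamma s$, which contradicts your own (correct) later attribution of the $(\mu_0-\mu-\gamma s)^{-1/2}$ in front of $S_{\mu_2}$ to the spatial gap between $\Omega_\mu$ and $\{\zz\ge1+\mu_2\}$; the two sources are distinct and both must be used.
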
 \ghoahbkseibhseir The proof of this  lemma uses the following properties of the weight function $w(z)$. \begin{proposition}   \label{wei} The weight function $w$  satisfies \begin{itemize} \item[(a)]  $w(y) \les w(z) $ for $y\le z$, \item[(b)] $w(y) \les w(z)$ for $0 < y/2 \leq z \leq 1+\mu_0$, \item[(c)] $\sqrt{\nu} \les w(y) \les 1$ for $y \in [0,1+\mu_0]$, \item[(d)] $y \les w(y)$ for  $y\in [0,1+\mu_0]$, \item[(e)] $w(y) e^{-\frac{y}{C \sqrt{\nu}}}  \les \sqrt{\nu}$ for $y\in [0,1+\mu_0]$ where $C>0$ is sufficiently large constant, depending only on $\theta_0$ in \eqref{8ThswELzXU3X7Ebd1KdZ7v1rN3GiirRXGKWK099ovBM0FDJCvkopYNQ2a101}. \end{itemize} \end{proposition}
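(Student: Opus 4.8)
The plan is to verify the five properties directly from the explicit formula $w(z)=\max(\sqrtnu,z)$ on $[0,1]$ and $w(z)=1$ on $[1,1+\mu_0]$, using throughout that $\nu\in(0,1]$ so that $\sqrtnu\le1$ and the two pieces agree at $z=1$. The first thing I would record is that $w$ is nondecreasing on $[0,1+\mu_0]$ and satisfies $\sqrtnu\le w(z)\le1$ everywhere. Property (a) is then immediate from monotonicity, and (c) is immediate from the two-sided bound. For (d), on $[0,1]$ one has $w(y)=\max(\sqrtnu,y)\ge y$, while on $[1,1+\mu_0]$ one has $y\le1+\mu_0=(1+\mu_0)w(y)$, so $y\les w(y)$ with a constant depending only on $\mu_0$.

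For (b) I would argue by cases according to the sizes of $y$ and $z$ relative to $1$, using the hypothesis $y\le2z$. If $y,z\in[0,1]$, then either $y\le\sqrtnu$, giving $w(y)=\sqrtnu\le w(z)$, or $y>\sqrtnu$, giving $w(y)=y\le2z\le2w(z)$. If $z\ge1$ we simply bound $w(y)\le1=w(z)$. If $y\ge1$ but $z<1$, then $z\ge y/2\ge1/2$, so $w(y)=1\le2z\le2w(z)$. In every case $w(y)\le2w(z)$.

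The only property requiring more than bookkeeping is (e), which I would prove by splitting the range of $y$. On $0\le y\le\sqrtnu$ we have $w(y)=\sqrtnu$ and the exponential factor is at most $1$, so the bound holds at once. On $\sqrtnu<y\le1$ we have $w(y)=y$, and writing $t=y/\sqrtnu$ the left-hand side equals $\sqrtnu\,t\,e^{-t/C}$; since $\sup_{t>0}t\,e^{-t/C}=C/e<\infty$, this is $\les\sqrtnu$. Finally, on $1\le y\le1+\mu_0$ we have $w(y)=1$ and $e^{-y/(C\sqrtnu)}\le e^{-1/(C\sqrtnu)}$, and the key point is that this quantity decays faster than any power of $\nu$ as $\nu\to0$: setting $u=1/\sqrtnu\ge1$, the desired inequality $e^{-1/(C\sqrtnu)}\les\sqrtnu$ becomes $u\le K e^{u/C}$, which holds with a finite $K$ because $\sup_{u\ge1}u\,e^{-u/C}<\infty$.

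I expect (e) to be the only delicate point, and even there the content is just the elementary fact that $\sup_{t>0}t\,e^{-t/C}$ is finite. I would note that the proposition itself holds for any fixed $C>0$; the reason for ultimately taking $C$ large, as stated, is to match the decay rate $\theta_0 b\sim\theta_0/\sqrtnu$ of the residual kernel in \eqref{8ThswELzXU3X7Ebd1KdZ7v1rN3GiirRXGKWK099ovBM0FDJCvkopYNQ2a101} when (e) is applied inside the Green's-function estimates.
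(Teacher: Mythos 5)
Your proof is correct and complete: each of (a)--(d) is verified directly from the formula $w(z)=\max(\sqrt{\nu},z)$ on $[0,1]$, $w\equiv 1$ on $[1,1+\mu_0]$, and your three-way split for (e) together with the elementary bound $\sup_{t>0}t\,e^{-t/C}<\infty$ is exactly the intended argument; the paper itself states the proposition without proof, treating these as routine properties of the weight. Your closing remark is also accurate: (e) holds for every fixed $C>0$, and the requirement that $C$ be large only matters when the estimate is matched against the decay rate $\theta_0 b$ in \eqref{8ThswELzXU3X7Ebd1KdZ7v1rN3GiirRXGKWK099ovBM0FDJCvkopYNQ2a101}.
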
   \ghoahbkseibhseir The next lemma deals with the $\YY_\mu$ analytic norm. \begin{lemma}[\bf Main $\YY$ norm estimate] \label{lem:main:Y} \cole Let $\mu_1$ be as defined in Lemma~\ref{lem:main:X}. Then the  nonlinear term in \eqref{kernel:est} is bounded in the $\YY_\mu$ norm as \begin{align} & (\mu_0-\mu-\gamma s) \sum_{|\aa|= 2} \left\lVert D^{\aa} (1+ |\nabla_h|)\int_0^\infty G(t-s, z, \zz)N(s, \zz) \,d\zz\right\rVert_{\YY_\mu} \notag\\ &\quad + \sum_{|\aa|\leq 1}  \left\lVert D^{\aa} (1+ |\nabla_h|) \int_0^\infty G(t-s, z, \zz)N(s, \zz) \,d\zz\right\rVert_{\YY_{\mu_1}} \notag\\ &\quad  \quad  \les \sum_{|\aa|\leq 1}\lVert D^{\aa} (1+ |\nabla_h|) N(s)\rVert_{\YY_{\mu_1}}    +\sum_{|\aa|\leq 1}  \lVert\nabla^{\aa} (1+ |\nabla_h|) N(s)\rVert_{S_{\mu_1}} \, . \label{8ThswELzXU3X7Ebd1KdZ7v1rN3GiirRXGKWK099ovBM0FDJCvkopYNQ2a706} \end{align} The $\YY_\mu$ norm of the trace kernel term in \eqref{kernel:est} is estimated as  \begin{align} & (\mu_0-\mu - \gamma s) \sum_{|\aa| = 2} \norm{ D^{\alpha} (1+ |\nabla_h|) G(t-s, z,0)B(s)}_{\YY_\mu} + \sum_{|\aa|\leq 1} \norm{ D^{\alpha} (1+ |\nabla_h|) G(t-s, z,0)B(s)}_{\YY_{\mu_1}} \notag \\ & \quad \les  \sum_{i\leq1}\left( \lVert\nabla_h^{\aa_h} (1+ |\nabla_h|) N(s)\rVert_{\YY_{\mu_1}} + \lVert \nabla_h^{\aa_h} (1+ |\nabla_h|) N(s)\rVert_{S_\mu}   \right) \, . \llabel{0uo S7 ulWD h7qG 2FKw2T JX z BES 2Jk Q4U Dy 4aJ2 IXs4 RNH41s py T GNh hk0 8ThswELzXU3X7Ebd1KdZ7v1rN3GiirRXGKWK099ovBM0FDJCvkopYNQ2a707}  \end{align} Lastly, the initial datum term in \eqref{kernel:est} may be bounded as   \begin{align}    &  \sum_{|\aa|\leq 2} \norm{ D^{\alpha} (1+ |\nabla_h|) \int_0^\infty G(t, z,\zz)\omega_{0}(\zz) \,d\zz}_{\YY_\mu}    \notag \\&\indeq    \les    \sum_{|\aa|\leq 2}\lVert D^{\aa} (1+ |\nabla_h|)    \omega_0\rVert_{\YY_{\mu}}   +     \sum_{|\aa|\leq 2} \sum_{\xi}    \lVert  \xi^{\aa_h} \partial_{\zz}^{\aa_3} (1+ |\xi|)    \omega_{0\xi}\rVert_{L^1(z\ge 1+\mu)}     \, .    \label{8ThswELzXU3X7Ebd1KdZ7v1rN3GiirRXGKWK099ovBM0FDJCvkopYNQ2a708}    \end{align} \colb \end{lemma}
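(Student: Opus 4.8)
The plan is to establish the three bounds of Lemma~\ref{lem:main:Y} in parallel with the main $X$-norm estimate of Lemma~\ref{lem:main:X}, taking advantage of the fact that the $\YY_\mu$ norm is $L^1$-based and hence carries no weight function. I would first note that the factor $(1+|\nabla_h|)$ is a horizontal Fourier multiplier which commutes with the Green's matrix $G_\xi$ of~\eqref{gre:fun}; on the Fourier side it is multiplication by $(1+|\xi|)$, so it may be absorbed into the nonlinearity and the whole argument carried out for the quantity $(1+|\nabla_h|)N$ displayed in~\eqref{8ThswELzXU3X7Ebd1KdZ7v1rN3GiirRXGKWK099ovBM0FDJCvkopYNQ2a:non}. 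Since $G_\xi$ is block diagonal, the horizontal components are transported by $G_{1,\xi}=\tilde H_\xi+\tilde R_\xi$ of Lemma~\ref{Green:fun} and the vertical component by the explicit Dirichlet kernel $G_{2,\xi}$; I would treat the heat-kernel pieces $\tilde H_\xi$, $G_{2,\xi}$ and the residual piece $\tilde R_\xi$ separately, using the bounds of Remark~\ref{R01} for the latter. As in Remark~\ref{R03}, all $z$- and $\zz$-integrations are understood along the contour $\partial\Omega_\theta\cup[1+\theta,\infty)$, and the estimates reduce to their real-variable counterparts.

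For the bulk Duhamel term~\eqref{8ThswELzXU3X7Ebd1KdZ7v1rN3GiirRXGKWK099ovBM0FDJCvkopYNQ2a706} the central mechanism is the transfer of the analytic weight from the output height $z$ at radius $\mu$ to the input height $\zz$ at radius $\mu_1$. Using the triangle inequality for the exponential weights in the spirit of~\eqref{8ThswELzXU3X7Ebd1KdZ7v1rN3GiirRXGKWK099ovBM0FDJCvkopYNQ2a133}, in the region where the weight is active one has $e^{\ee(1+\mu-z)_+|\xi|}\le e^{\ee(1+\mu_1-\zz)_+|\xi|}\,e^{-\ee(\mu_1-\mu)|\xi|}\,e^{\ee|z-\zz||\xi|}$. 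The factor $e^{\ee|z-\zz||\xi|}$ is absorbed, for $\ee$ small depending on $\theta_0$, into the Gaussian decay $e^{-(z-\zz)^2/4\nu(t-s)}$ and the factor $e^{-\nu|\xi|^2(t-s)}$ of $\tilde H_\xi$ (and, using $|z-\zz|\le z+\zz$, into the $e^{-\theta_0 b(z+\zz)/2}$ and $e^{-\theta_0(z+\zz)^2/2\nu(t-s)}$ decays of $\tilde R_\xi$). The surviving gain $e^{-\ee(\mu_1-\mu)|\xi|}=e^{-\ee(\mu_0-\mu-\gamma s)|\xi|/4}$ recovers one horizontal derivative at the price of $(\mu_0-\mu-\gamma s)^{-1}$, via $\sup_{|\xi|\ge0}|\xi|\,e^{-c(\mu_0-\mu-\gamma s)|\xi|}\les(\mu_0-\mu-\gamma s)^{-1}$, which is exactly compensated by the prefactor $(\mu_0-\mu-\gamma s)$ on the $|\alpha|=2$ term. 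The remaining integration is closed by Young's inequality in $\zz$ and in $\xi$, the decisive point being that in the $L^1_z$ norm the singular prefactors are harmless: the $z$-integral of $\tfrac{1}{\sqrt{\nu(t-s)}}e^{-(z-\zz)^2/8\nu(t-s)}$ and of $b\,e^{-\theta_0 b(z+\zz)/2}$ are both $\OO(1)$, so the large factor $b=|\xi|+\sqrtnu^{-1}$ is swallowed by the integration. Finally, splitting $\zz$ into the analytic region $\{\zz\le1+\mu_1\}$ and the far field $\{\zz\ge1+\mu_1\}$ produces the $\YY_{\mu_1}$ and the $S_{\mu_1}$ contributions of $N$ on the right-hand side.

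The trace term and the initial-datum term~\eqref{8ThswELzXU3X7Ebd1KdZ7v1rN3GiirRXGKWK099ovBM0FDJCvkopYNQ2a708} are treated analogously. For the trace term the kernel is evaluated at $\zz=0$, and its $z$-derivatives again produce factors $b$ and $(\nu(t-s))^{-1/2}$; in the $L^1_z$ norm these are absorbed by the $z$-integration, which is precisely why, in contrast with the $X$-norm trace bound~\eqref{8ThswELzXU3X7Ebd1KdZ7v1rN3GiirRXGKWK099ovBM0FDJCvkopYNQ2a703}, neither a $1/\sqrt{t-s}$ singularity nor a $(\mu_0-\mu-\gamma s)^{-1/2}$ loss appears (in the $X$ case the weight $w$ near the boundary, through Proposition~\ref{wei}(e), forces these factors). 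The initial-datum term is a direct application of the same kernel bounds with $\omega_0$ in place of $N$, split into the near and far regions to yield the $\YY_\mu$ and the $L^1(z\ge1+\mu)$ terms of~\eqref{8ThswELzXU3X7Ebd1KdZ7v1rN3GiirRXGKWK099ovBM0FDJCvkopYNQ2a708}.

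I expect the main obstacle to be the purely vertical part of the $|\alpha|=2$ bound, namely the conormal second derivative $(z\partial_z)^2$ acting on the output, since the right-hand side allows only one $\zz$-derivative of $N$. Putting both conormal derivatives on the kernel is not admissible in $L^1_z$ (it generates an uncontrolled factor $\zz^2/\nu(t-s)$), so one must transfer one conormal derivative onto $N$ through the reflection symmetry of the heat kernel, integrating by parts in $\zz$ and converting $\partial_z$ into $\partial_\zz$. Carrying this out requires matching the output conormal weight $z$ against the input conormal weight $\zz$ up to remainders $(z-\zz)$ that are controlled by the Gaussian, and tracking the boundary contributions generated at $\zz=0$, which feed into the trace term and interact with the Dirichlet condition~\eqref{Dirichlet}. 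A further difficulty specific to 3D is that the horizontal and vertical components travel under different kernels $G_{1,\xi}$ (Neumann) and $G_{2,\xi}$ (Dirichlet) and are coupled through the products in~\eqref{8ThswELzXU3X7Ebd1KdZ7v1rN3GiirRXGKWK099ovBM0FDJCvkopYNQ2a:non}; one must verify that each term, after the conormal rewriting $u_3\partial_z=(u_3/z)\,z\partial_z$, lands in the $L^1$-based spaces with the single derivative of $N$ permitted on the right, while the extra factor $(1+|\nabla_h|)$ is carried consistently throughout.
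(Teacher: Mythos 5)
Your proposal is correct and takes essentially the same route as the paper: the paper in fact omits the proof of Lemma~\ref{lem:main:Y}, deferring to the two-dimensional argument of \cite{KukVicWan19} after remarking only that the Green's matrix \eqref{gre:fun} is block diagonal with $G_{2,\xi}$ essentially contained in $G_{1,\xi}$, and your sketch --- commuting $(1+|\nabla_h|)$ with $G_\xi$, splitting into heat and residual kernels, transferring the analytic weight from radius $\mu$ to $\mu_1$ to recover one horizontal derivative at cost $(\mu_0-\mu-\gamma s)^{-1}$, using the $L^1$ integration in $z$ to absorb the factors $b$ and $(\nu(t-s))^{-1/2}$, and integrating by parts in $\bar z$ to place one conormal derivative on $N$ for the $(z\partial_z)^2$ case --- is a faithful reconstruction of exactly that argument. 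Your explanation of why the $Y$-norm trace bound carries no $1/\sqrt{t-s}$ singularity, in contrast with \eqref{8ThswELzXU3X7Ebd1KdZ7v1rN3GiirRXGKWK099ovBM0FDJCvkopYNQ2a703}, is also the correct mechanism.
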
 \ghoahbkseibhseir \begin{remark}  One of the major difference between the above two lemmas and the  corresponding ones in~\cite{KukVicWan19}  is that the vorticity  $\omega$ in 3D is a vector rather than a scalar function in 2D  case. Note from~\eqref{kernel:est} that both cases share similar  integration representation formulas and $G_{\xi}$ in~\eqref{gre:fun}  appears in 2D case essentially ($G_{2,\xi}$ is part of the kernel   $G_{1,\xi}$). Therefore, following the proof in~\cite{KukVicWan19} ,  we can prove Lemma~\ref{lem:main:X}--\ref{lem:main:Y} and we thus  omit the details.  \end{remark}   \ghoahbkseibhseir  \ghoahbkseibhseir \section{The Sobolev norm estimate}  \label{sec-sobolev} In this section, we state the estimates for   the  Sobolev part of the norm   \begin{equation}    \sum_{|\aa|\le 5}  \norm{\nabla^{\aa} \omega}_{S}    = \sum_{|\aa|\le 5}  \norm{\nabla^{\aa} \omega}_{L^2_{x,y}(y\geq 1/2)}   = \sum_{|\aa|\le 5}\left(\sum_\xi\lVert z |\xi|^{|\alpha_h|}\partial_{z}^{\aa_3}\omega_\xi\rVert^2_{L^2(y\ge1/2)}\right)^{1/2}    \,    \label{8ThswELzXU3X7Ebd1KdZ7v1rN3GiirRXGKWK099ovBM0FDJCvkopYNQ2a275}    \end{equation} where $\nabla = (\nabla_h, \partial_{z}) =    (\partial_{x}, \partial_{y}, \partial_{z})$. The proofs in the $3$D    case are similar as the 2D case and are omitted.  For a given norm    $\norm{\cdot}$, it is  convenient to introduce the notation    \begin{align} \lVert \nabla^k u\rVert = \sum_{|\aa| = k} \lVert    \nabla^{\aa} u \rVert = \sum_{|\aa| = k} \lVert     \partial_{x}^{\aa_1}\partial_{y}^{\aa_2}\partial_{z}^{\aa_3} u    \rVert \, . \notag \end{align} \ghoahbkseibhseir We first state a lemma which estimates $u$ in terms of $\omega$.  \ghoahbkseibhseir \cole \begin{lemma} \label{x:u} Let $t$ be such that $\gamma t \leq \mu_0/2$. Then we have   \begin{equation}  \sum_{0\leq k \leq 3} \lVert \nabla^k u(t) \rVert_{L^\infty(z\ge1/4)}     \les \sum_{|\aa| \leq 3} \sum_\xi \lVert |\xi|^{|\alpha_h|}\partial_{z}^{\aa_3} u_\xi(t) \rVert_{L^\infty(z\ge1/4)}     \lesssim     \NORM{\omega}_t    \llabel{w5Z C8 B3nU Bp9p 8eLKh8 UO 4 fMq Y6w lcA GM xCHt vlOx MqAJoQ QU 1 e8a 2aX 8ThswELzXU3X7Ebd1KdZ7v1rN3GiirRXGKWK099ovBM0FDJCvkopYNQ2a281}   \end{equation}   and    \begin{align}  \norm{D^5 u(t)}_{L^2(z\geq   1/4)}  \les     \NORM{\omega}_t  \, .  \llabel{9Y6 2r lIS6 dejK   Y3KCUm 25 7 oCl VeE e8p 1z UJSv bmLd Fy7ObQ FN l J6F RdF   8ThswELzXU3X7Ebd1KdZ7v1rN3GiirRXGKWK099ovBM0FDJCvkopYNQ2a281b}   \end{align} \end{lemma}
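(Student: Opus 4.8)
The plan is to treat the two asserted bounds in turn, after first disposing of the trivial inequality at the start of the displayed chain. Writing $u$ as a horizontal Fourier series, one has $\nabla^\alpha u = \sum_\xi (\ii\xi_1)^{\alpha_1}(\ii\xi_2)^{\alpha_2}\partial_{z}^{\alpha_3}u_\xi\, e^{\ii(\xi_1 x + \xi_2 y)}$, so that $|\nabla^\alpha u|\le \sum_\xi |\xi|^{|\alpha_h|}|\partial_{z}^{\alpha_3}u_\xi|$ pointwise; taking the supremum over $\{z\ge 1/4\}$ gives the first $\les$ at once. It therefore remains to bound the middle $\ell^1_\xi L^\infty_z$ quantity and $\norm{D^5 u}_{L^2(z\ge 1/4)}$ by $\NORM{\omega}_t$. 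For both, the starting point is the Biot--Savart representation of Lemma~\ref{der:str}: every component of $\nabla^\alpha u_\xi$ (respectively $D^5 u_\xi$) is, after moving the horizontal derivatives onto $\omega_\xi$ and differentiating in $z$ exactly as in \eqref{8ThswELzXU3X7Ebd1KdZ7v1rN3GiirRXGKWK099ovBM0FDJCvkopYNQ2a:der:y:u1}--\eqref{8ThswELzXU3X7Ebd1KdZ7v1rN3GiirRXGKWK099ovBM0FDJCvkopYNQ2a1:der:y:u1}, an integral of the form $\int_0^\infty e^{-|\xi||z-\zz|}(\cdots)\,d\zz$ plus boundary-reflected terms $e^{-|\xi|(z+\zz)}$ against a suitable derivative of $\omega_\xi$, together with the occasional local term $\omega_\xi(z)$ produced by a $z$-derivative hitting the kernel.

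The $L^\infty$ estimate proceeds by splitting the $\zz$-integral as $\int_0^{1+\mu}+\int_{1+\mu}^\infty$, with $\mu=\mu_0-\gamma t$. On the near piece I bound the exponential kernel by $1$ and use the weight identity \eqref{8ThswELzXU3X7Ebd1KdZ7v1rN3GiirRXGKWK099ovBM0FDJCvkopYNQ2a133} to transfer the analytic weight onto $\omega_\xi(\zz)$; summing over $\xi$ then produces the analytic $\YY_\mu$ norm of the appropriate derivative of $\omega$, exactly as in \eqref{l:inf:u}. On the far piece I use H\"older in $\zz$, bounding $\int_{1+\mu}^\infty|\omega_\xi|\,d\zz \les \lVert z\omega_\xi\rVert_{L^2(z\ge 1+\mu)}$, which upon summation gives the $S_\mu$ norm; since $1+\mu\ge 1>1/2$ this is in turn dominated by the $\ZZZ$ norm. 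Crucially, because we evaluate only at $z\ge 1/4$, the weight $w(z)$ is comparable to $1$ by Proposition~\ref{wei}(c)--(d) and the exponential factor $e^{\ee(1+\mu-z)_+|\xi|}$ is $\ge 1$, so the analytic norms genuinely dominate the pointwise values there. Carrying this out for the (at most two) derivatives of $\omega$ needed to produce up to three derivatives of $u$—all controlled by $\YY(t)$ and $\ZZZ$—yields the second $\les$; this is simply \eqref{l:inf:u}, \eqref{l:inf:der:u1}, and \eqref{l:inf:der:u2} extended by one further derivative.

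For $\norm{D^5 u}_{L^2(z\ge 1/4)}$ I keep the estimates in $L^2$ rather than $L^\infty$. Applying the Leibniz rule to the conormal operator $D=(\partial_x,\partial_y,z\partial_{z})$, each $z\partial_{z}$ that falls on the Biot--Savart kernel reproduces an integral of the same exponential type (possibly with an extra local $z\partial_{z}\omega_\xi(z)$ term), and the accompanying factors of $z$ combine with the $z$-weight built into the $S$ norm. Estimating the resulting convolutions by Young's inequality in $\xi$ and Minkowski/Young in $\zz$, and again splitting $\int_0^{1+\mu}+\int_{1+\mu}^\infty$ into an analytic ($\YY_\mu$) and a Sobolev ($S_\mu$, hence $\ZZZ$) contribution, one bounds $\norm{D^5u}_{L^2(z\ge 1/4)}$ by $\sum_{|\alpha|\le 5}\norm{\nabla^\alpha\omega}_S$ plus analytic terms, i.e.\ by $\NORM{\omega}_t$; here one uses that $u$ is one derivative smoother than $\omega$, so five conormal derivatives of $u$ require at most the fifth-order information already present in the $\ZZZ$ norm. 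The main obstacle, and the only point requiring genuine care, is the bookkeeping at the interface between the analytic strip and the far field: one must verify that the conormal weights $z^{\alpha_3}$ are absorbed by the $S$-norm weight and that the analytic weight $w(z)$ is harmless on $\{z\ge1/4\}$. Since the Biot--Savart kernels here involve no viscous Green's function and hence carry no $\nu$-dependence, the argument is otherwise identical to its two-dimensional counterpart in \cite{KukVicWan19}.
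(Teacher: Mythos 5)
First, a caveat on the comparison: the paper does not actually prove Lemma~\ref{x:u} — Section~\ref{sec-sobolev} states that the proofs "are similar as the 2D case and are omitted," deferring to \cite{KukVicWan19} — so your proposal can only be measured against the intended argument. For the chain of inequalities in the first display your plan is sound and matches what the paper does for the analogous bounds \eqref{l:inf:u}, \eqref{l:inf:der:u1}, \eqref{l:inf:der:u2}: the first $\les$ is the triangle inequality for the horizontal Fourier series; the middle quantity is bounded by writing $\nabla^\alpha u_\xi$ through Lemma~\ref{der:str}, splitting the $\zz$-integral at $1+\mu$, and sending the near piece to $\YY_\mu$ and the far piece to $S_\mu$, hence to $\ZZZ$; on $z\ge 1/4$ the weight $w$ and the factor $e^{\ee(1+\mu-z)_+|\xi|}$ are indeed harmless by Proposition~\ref{wei}, and the local terms produced by $z$-derivatives involve at most $\partial_z^{2}\omega_\xi$, which the $X_\mu$ norm controls there. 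Two small repairs: you should take $\mu$ strictly below $\mu_0-\gamma t$ (e.g.\ $\mu=\mu_0/4$, which is admissible since $\gamma t\le\mu_0/2$) rather than $\mu=\mu_0-\gamma t$, so that the degenerate prefactors $(\mu_0-\mu-\gamma t)^{1/2+\AA}$ and $(\mu_0-\mu-\gamma t)^{\AA}$ in \eqref{8ThswELzXU3X7Ebd1KdZ7v1rN3GiirRXGKWK099ovBM0FDJCvkopYNQ2a17} and \eqref{8ThswELzXU3X7Ebd1KdZ7v1rN3GiirRXGKWK099ovBM0FDJCvkopYNQ2a25} are uniformly bounded; and for local terms evaluated at $z\ge 1+\mu$ one needs a one-dimensional Agmon inequality plus Cauchy--Schwarz in $\xi$ to pass from the $\ell^2_\xi L^2_z$ structure of $S$ to $\ell^1_\xi L^\infty_z$, which costs extra derivatives but stays within the five available in $\ZZZ$.

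The genuine gap is in the $\norm{D^5u}_{L^2(z\ge 1/4)}$ estimate. You assert that "five conormal derivatives of $u$ require at most the fifth-order information already present in the $\ZZZ$ norm," but the $\ZZZ$ norm only sees $z\ge 1/2$, whereas the bound is claimed on $z\ge 1/4$. On the band $1/4\le z<1/2$ the only available control is the analytic norms, and these control just \emph{two} conormal derivatives of $\omega$ (three horizontal ones through the extra $(1+|\nabla_h|)$ in $\YY$). Meanwhile $(z\partial_z)^5$ applied to the Biot--Savart representation produces, iterating \eqref{8ThswELzXU3X7Ebd1KdZ7v1rN3GiirRXGKWK099ovBM0FDJCvkopYNQ2a1:der:y:u1}, local terms up to $z^5\partial_z^4\omega_\xi(z)$ — four vertical derivatives of $\omega$ evaluated in that band — and the nonlocal terms carry up to $|\xi|^5$ against only three controlled horizontal derivatives; your reduction therefore fails there as written. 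The missing ingredient is the interior analyticity: points of $[1/4,1/2]$ lie at distance $\gtrsim\mu$ from $\partial\Omega_\mu$, so Cauchy's integral formula upgrades the $L^\infty(\Omega_\mu)$ control of $\omega_\xi$ to control of arbitrarily many $z$-derivatives there, and for $\Re \zz\le 1$ the weight $e^{\ee(1+\mu-\zz)_+|\xi|}\ge e^{\ee\mu|\xi|}$ absorbs any fixed power of $|\xi|$ (while for $\zz$ near $1+\mu$ and $z\le 1/2$ the kernel factor $e^{-|\xi||z-\zz|}\le e^{-|\xi|/2}$ does the same). This device is the real content of the lemma on the overlap region between the analytic strip and the support of the Sobolev norm, and it must be made explicit; "bookkeeping at the interface" does not capture it.
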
 \colb \ghoahbkseibhseir \ghoahbkseibhseir Next we state an a priori estimate for the norm $\sum_{|\aa| \leq 4} \lVert{\nabla^{\aa} \omega \rVert}_S$. We denote  \begin{align} \phi(z)=z\bar \psi(z)    \, , \llabel{kEm qM N0Fd NZJ0 8DYuq2 pL X JNz  4rO ZkZ X2 IjTD 1fVt z4BmFI Pi 0 GKD R2W 8ThswELzXU3X7Ebd1KdZ7v1rN3GiirRXGKWK099ovBM0FDJCvkopYNQ2a:phi:def} \end{align} where $\bar\psi\in C^\infty$  is a non-decreasing function such that $\bar\psi=0$ for $0\le z\le 1/4$ and $\bar\psi=1$ for $z\ge 1/2$. Noting that  \begin{align} \norm{z f}_{L^2(z\geq 1/2)} \leq \norm{\phi f}_{L^2(\HH)} \,, \notag \end{align} it suffices to estimate the norm defined in \eqref{8ThswELzXU3X7Ebd1KdZ7v1rN3GiirRXGKWK099ovBM0FDJCvkopYNQ2a275} with the weight $z$ changed to $\phi$.  \ghoahbkseibhseir \ghoahbkseibhseir \cole \begin{lemma} \label{L12a} For any $0 < t < \frac{\mu_0}{2\gamma}$, the estimate    \begin{align} &\sum_{|\aa|\le 5} \lVert \phi \nabla^{\aa} \omega(t)\rVert^2_{L^2(\HH)}   \notag \lesssim   \Bigl(1+ t \sup_{s\in[0,t]} \NORM{\omega(s)}_s^3 \Bigr) e^{C t (1+ \sup_{s\in[0,t]} \NORM{\omega(s)}_s)}    \sum_{|\aa|\le 5} \lVert \phi \nabla^{\aa} \omega_0\rVert^2_{L^2(\HH)}    \llabel{PhO zH zTLP lbAE OT9XW0 gb T Lb3 XRQ qGG 8o 4TPE 6WRc uMqMXh s6 x Ofv 8st  8ThswELzXU3X7Ebd1KdZ7v1rN3GiirRXGKWK099ovBM0FDJCvkopYNQ2a286a} \end{align} holds,  where $C>0$ is a constant independent of $\gamma$. \end{lemma}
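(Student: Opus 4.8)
The plan is to run a weighted $L^2$ energy estimate directly on the vorticity equation \eqref{eq:vor}, using the cutoff $\phi$ to localize to the region $z\ge1/4$ where the flow is Sobolev regular. Since $\phi(z)=z\bar\psi(z)$ vanishes for $z\le1/4$, the weight annihilates every contribution from the boundary $\{z=0\}$; in particular the mixed (Neumann/Dirichlet) boundary conditions \eqref{bdry:hor}--\eqref{bdry:ver} never enter and no boundary terms survive any integration by parts. Concretely, for each multi-index $\alpha$ with $|\alpha|\le5$ I would apply $\nabla^\alpha$ to \eqref{eq:vor}, take the $L^2(\HH)$ inner product with $\phi^2\nabla^\alpha\omega$, and sum over $\alpha$. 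Writing $E(t)=\sum_{|\alpha|\le5}\lVert\phi\nabla^\alpha\omega(t)\rVert_{L^2(\HH)}^2$, this yields an identity of the schematic form
\[
\tfrac12\tfrac{d}{dt}E+\nu\,D=T_{\mathrm{tr}}+T_{\mathrm{st}}+T_{\nu},
\]
where $D\ge0$ is the dissipation, $T_{\mathrm{tr}}$ comes from the transport term $u\cdot\nabla\omega$, $T_{\mathrm{st}}$ from the stretching term $\omega\cdot\nabla u$, and $T_\nu$ is the commutator of $-\nu\Delta$ with the weight $\phi^2$.

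First I would dispatch the viscous contributions. Integrating $-\nu\int\phi^2\nabla^\alpha\omega\cdot\nabla^\alpha\Delta\omega$ by parts produces the good term $+\nu\lVert\phi\,\nabla\nabla^\alpha\omega\rVert_{L^2}^2$ together with commutator terms carrying a factor $\nabla\phi$. Since $\phi'$ is supported in the transition strip $\{1/4\le z\le1/2\}$ and is bounded, Young's inequality absorbs these into half of the dissipation plus a remainder controlled by $\nu\,\lVert\nabla^\alpha\omega\rVert_{L^2(\{1/4\le z\le1/2\})}^2$. Crucially this strip lies inside the analytic region $\{0\le z\le1+\mu_0\}$, so the remainder is bounded by $\NORM{\omega}_t^2$; with $\nu\le1$ it is harmless.

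The nonlinear terms are the heart of the estimate. For the transport term I would use the commutator decomposition $\nabla^\alpha(u\cdot\nabla\omega)=u\cdot\nabla\nabla^\alpha\omega+[\nabla^\alpha,u\cdot\nabla]\omega$. The top-order piece, integrated by parts against $\phi^2\nabla^\alpha\omega$, loses its dangerous part to the divergence-free condition $\div u=0$ and leaves only $\int(u\cdot\nabla\phi^2)\,|\nabla^\alpha\omega|^2$, which is controlled because $\nabla\phi^2$ is supported in $\{z\ge1/4\}$ and $\lVert u\rVert_{L^\infty(z\ge1/4)}\les\NORM{\omega}_t$ by Lemma~\ref{x:u}. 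The commutator and the stretching term $\nabla^\alpha(\omega\cdot\nabla u)$ are then handled by the product rule, placing the lower-order velocity factors in $L^\infty(z\ge1/4)$ and the top-order ones in $L^2(z\ge1/4)$, all controlled by $\NORM{\omega}_t$ through Lemma~\ref{x:u} (the $L^\infty$ bounds for up to three derivatives and the $L^2$ bound for $D^5u$). Collecting everything produces a differential inequality of the form $\tfrac{d}{dt}E\le C(1+\NORM{\omega}_t)\,E+C\,\NORM{\omega}_t^3$, where $C$ depends only on $\phi$, $\mu_0$, $\theta_0$, and the implicit constants of Lemma~\ref{x:u}, and is therefore independent of $\gamma$.

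Finally I would integrate this inequality by Gr\"onwall's lemma: the homogeneous part yields the exponential factor $e^{Ct(1+\sup_{[0,t]}\NORM{\omega}_s)}$, while Duhamel integration of the cubic forcing $C\NORM{\omega}_s^3$ against that exponential contributes the polynomial prefactor $(1+t\sup_{[0,t]}\NORM{\omega}_s^3)$, giving the claimed bound. The main obstacle I expect is the top-order case $|\alpha|=5$: there both the stretching and the commutator force one genuine factor of $\nabla^6u$ (equivalently $\nabla^5\omega$), and one must check that this factor sits exactly at the regularity delivered by Lemma~\ref{x:u} and can be paired with the weight $\phi$ even though $\phi$ degenerates at $z=1/4$, precisely where the velocity bounds are only available on $\{z\ge1/4\}$. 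Keeping $C$ independent of $\gamma$ throughout is what makes the resulting estimate usable in the subsequent bootstrap.
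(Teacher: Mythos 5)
The paper never actually writes out a proof of Lemma~\ref{L12a}: Section~\ref{sec-sobolev} states that the Sobolev estimates in 3D are similar to the 2D case and omits them, deferring to \cite{KukVicWan19}. Your reconstruction --- apply $\nabla^{\alpha}$ to \eqref{eq:vor}, test against $\phi^2\nabla^{\alpha}\omega$, use $\div u =0$ and the support of $\phi$ to eliminate all boundary contributions, control the velocity factors through Lemma~\ref{x:u}, and close with Gr\"onwall, whose cubic forcing produces exactly the prefactor $1+t\sup_{s}\NORM{\omega(s)}_s^{3}$ --- is the standard argument and is the route the omitted proof follows, so the overall strategy is correct.

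One step, however, is asserted rather than justified, and it is precisely where the specific structure of the paper's norms must be invoked. Several of your error terms (the viscous commutator $-\tfrac{\nu}{2}\int\Delta(\phi^2)|\nabla^{\alpha}\omega|^2$, the transport term $\int (u\cdot\nabla\phi^2)|\nabla^{\alpha}\omega|^2$, and the low-order pieces of the Leibniz expansions) live on the transition strip $1/4\le z\le 1/2$, where $\phi$ degenerates, and carry up to five derivatives of $\omega$ with no compensating weight. You dismiss these by saying the strip lies inside the analytic region and is therefore bounded by $\NORM{\omega}_t^2$. But the analytic norms $X(t)$ and $\YY(t)$ only control $D^{\alpha}\omega$ for $|\alpha|\le 2$, and the Sobolev part $\ZZZ$ of the cumulative norm only lives on $z\ge 1/2$; nothing in $\NORM{\cdot}_t$ directly bounds $\nabla^{5}\omega$ on $[1/4,1/2]$. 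The missing ingredient is a Cauchy estimate: for $z_0\in[1/4,1/2]$ a complex disk of radius comparable to $\mu_0$ centered at $z_0$ is contained in $\Omega_\mu$, the weight $w$ is bounded below on that disk, and the Fourier weight $e^{\ee(1+\mu-z)_+|\xi|}$ absorbs any fixed power of $|\xi|$, so that $\sum_{\xi}\||\xi|^{a}\partial_z^{b}\omega_\xi\|_{L^\infty(1/4\le z\le 1/2)}\les \|\omega\|_{X_\mu}$ for all $a+b\le 5$. Without this step the differential inequality does not close. The other delicate point --- that the top-order stretching term forces $\phi\nabla^{6}u$ in $L^2(z\ge 1/4)$, one derivative beyond what Lemma~\ref{x:u} states, which must be recovered from $\|\phi\nabla^{5}\omega\|_{L^2}$ via the Biot--Savart representation with localization errors again controlled by the analytic norms --- you do flag correctly as the place requiring verification.
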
 \colb \ghoahbkseibhseir As a direct consequence of the above result,   \begin{align}    \sum_{|\aa|\le 5} \lVert z    \nabla^{\aa}\omega(t)\rVert^2_{L^2(z\geq 1/2)}          \lesssim    \Bigl(1+ t \sup_{s\in[0,t]} \NORM{\omega(s)}_s^3 \Bigr)      e^{C t    (1+ \sup_{s\in[0,t]} \NORM{\omega(s)}_s)}    \sum_{|\aa|\le 5}    \lVert z  \nabla^{\aa}\omega_0\rVert^2_{L^2(z\geq 1/4)}    \label{8ThswELzXU3X7Ebd1KdZ7v1rN3GiirRXGKWK099ovBM0FDJCvkopYNQ2a286}    \end{align}  holds for the same constant in    Lemma~\ref{L12a}. \ghoahbkseibhseir \section{Inviscid limit}    \label{con} \subsection{Closing the a~priori estimates} \label{sec05} We may close the a priori estimate in a similar manner as in \cite{KukVicWan19}. However, for completeness, we provide a sketch of the proofs of Theorems~\ref{T01} and  \ref{T02}. \ghoahbkseibhseir \begin{proof}[Proof of Theorem~\ref{T01}] We denote \begin{align} \tilde M  = \sum_{|\aa|\leq 2}\lVert D^{\aa} \omega_0\rVert_{X_{\mu_0}} +      \sum_{|\aa|\leq 2} \sum_{\xi} \lVert \nabla^{\aa}\omega_{0,\xi}\rVert_{L^\infty(z\ge 1+\mu_0)} \notag \end{align} and  \begin{align} \overline M &=  \sum_{|\aa|\leq 2}\lVert D^{\aa} (1+ |\nabla_h|)\omega_0\rVert_{\YY_{\mu_0}}   +     \sum_{|\aa|\leq 2} \sum_{\xi}            \lVert  \nabla^{\aa}(1+ |\nabla_h|)\omega_{0,\xi}\rVert_{L^1(z\ge 1+\mu_0)}             \, . \notag \end{align}   \ghoahbkseibhseir First we estimate the $X(t)$ norm of $\omega(t)$.  From the mild formulation \eqref{kernel:est}, the estimates \eqref{8ThswELzXU3X7Ebd1KdZ7v1rN3GiirRXGKWK099ovBM0FDJCvkopYNQ2a701}--\eqref{8ThswELzXU3X7Ebd1KdZ7v1rN3GiirRXGKWK099ovBM0FDJCvkopYNQ2a703} for the nonlinear term, and the Lemma \ref{L01}, we obtain   \begin{align}  \sum_{|\aa|=2} \norm{ D^{\aa} \omega(t)}_{X_\mu}  &\les   \int_0^t  \left(\frac{\NORM{\omega(s)}_s^2}{(\mu_0-\mu-\gamma s)^{3/2+\AA}}  +   \frac{1}{\sqrt{t-s}} \frac{\NORM{\omega(s)}_s^2}{(\mu_0-\mu-\gamma  s)^{1+\AA}}\right) \,ds +  \tilde M   \notag\\ &    \les  \sup_{0\leq  s \leq t} \NORM{\omega(s)}_s^2  \left( \frac{1}{\gamma  (\mu_0-\mu-\gamma t)^{1/2+\AA}}  +  \frac{1}{\sqrt{\gamma}  (\mu_0-\mu-\gamma t)^{1/2+\AA}}   \right)  +  \tilde M    \notag\\  &\les        \frac{\sup_{0\leq s \leq t}  \NORM{\omega(s)}_s^2}{\sqrt{\gamma} (\mu_0-\mu-\gamma t)^{1/2+\AA}}  + \tilde M    \,   \label{8ThswELzXU3X7Ebd1KdZ7v1rN3GiirRXGKWK099ovBM0FDJCvkopYNQ2a305}   \end{align} where $t < \mu_0/2\gamma$, $s\in (0,t)$, and $\mu <   \mu_0-\gamma t$.  Similarly, we obtain \begin{align}   \sum_{|\aa|\leq 1} \norm{ D^{\aa} \omega(t)}_{X_\mu}  &\les   \int_0^t \biggl(\frac{\NORM{\omega(s)}_s^2}{(\mu_0-\mu-\gamma   s)^{1/2+\AA}}  + \frac{1}{\sqrt{t-s}}   \frac{\NORM{\omega(s)}_s^2}{(\mu_0-\mu-\gamma s)^{\AA}} \biggr) \,ds   +  \tilde M   \notag\\       &\les      \frac{\sup_{0\leq s \leq t} \NORM{\omega(s)}_s^2}{\sqrt{\gamma}}      + \tilde M    \, .    \label{8ThswELzXU3X7Ebd1KdZ7v1rN3GiirRXGKWK099ovBM0FDJCvkopYNQ2a305a}    \end{align}  Combining    \eqref{8ThswELzXU3X7Ebd1KdZ7v1rN3GiirRXGKWK099ovBM0FDJCvkopYNQ2a305}    and    \eqref{8ThswELzXU3X7Ebd1KdZ7v1rN3GiirRXGKWK099ovBM0FDJCvkopYNQ2a305a},    we obtain \begin{align} \norm{\omega(t)}_{X(t)} \les    \frac{\sup_{0\leq s \leq t} \NORM{\omega(s)}_s^2}{\sqrt{\gamma}}    + \tilde M    \, .    \label{8ThswELzXU3X7Ebd1KdZ7v1rN3GiirRXGKWK099ovBM0FDJCvkopYNQ2a305b}    \end{align} \ghoahbkseibhseir Next we estimate the $\YY(t)$ norm of $\omega(t)$.  From the mild formulation \eqref{kernel:est}, the estimates \eqref{8ThswELzXU3X7Ebd1KdZ7v1rN3GiirRXGKWK099ovBM0FDJCvkopYNQ2a706}--\eqref{8ThswELzXU3X7Ebd1KdZ7v1rN3GiirRXGKWK099ovBM0FDJCvkopYNQ2a708} for the nonlinear term, and Lemma~\ref{L09}, , we obtain \begin{align}  \sum_{|\aa|=2} \norm{ D^{\aa} (1+  |\nabla_h|)\omega(t)}_{\YY_\mu}  &\les   \int_0^t \frac{\NORM{\omega(s)}_s^2}{(\mu_0-\mu-\gamma s)^{1+\AA}}  \,ds + \overline M    \les      \frac{\sup_{0\leq s \leq t} \NORM{\omega(s)}_s^2}{ \gamma (\mu_0-\mu-\gamma t)^{\AA}}   + \overline M    \, . \label{8ThswELzXU3X7Ebd1KdZ7v1rN3GiirRXGKWK099ovBM0FDJCvkopYNQ2a306}    \end{align} For the lower order derivatives we obtain \begin{align} \sum_{|\aa|\leq 1} \norm{ D^{\aa}(1+ |\nabla_h|) \omega(t)}_{\YY_\mu} &\les   \int_0^t \frac{\NORM{\omega(s)}_s^2}{(\mu_0-\mu-\gamma s)^{\AA}}  \,ds +  \overline M      \les      \frac{\sup_{0\leq s \leq t} \NORM{\omega(s)}_s^2}{ \gamma}   + \overline M    \, . \label{8ThswELzXU3X7Ebd1KdZ7v1rN3GiirRXGKWK099ovBM0FDJCvkopYNQ2a306a} \end{align} By combining \eqref{8ThswELzXU3X7Ebd1KdZ7v1rN3GiirRXGKWK099ovBM0FDJCvkopYNQ2a306}--\eqref{8ThswELzXU3X7Ebd1KdZ7v1rN3GiirRXGKWK099ovBM0FDJCvkopYNQ2a306a}, we arrive at \begin{align}   \norm{\omega(t)}_{\YY(t)} \les  \frac{\sup_{0\leq s \leq t} \NORM{\omega(s)}_s^2}{\gamma}     + \overline M    \, . \label{8ThswELzXU3X7Ebd1KdZ7v1rN3GiirRXGKWK099ovBM0FDJCvkopYNQ2a306b}  \end{align} To conclude, let \begin{align*} \mathring M = \sum_{|\aa|\le 3} \lVert  \nabla^{\aa}\omega_0\rVert_{L^2(z\ge 1/4)} \les M  \, . \end{align*} Recall that the Sobolev estimate  \eqref{8ThswELzXU3X7Ebd1KdZ7v1rN3GiirRXGKWK099ovBM0FDJCvkopYNQ2a286} yields \begin{align} \norm{\omega(t)}_{\ZZZ}  \les \left(1+ \frac{\sup_{s\in[0,t]} \NORM{\omega(s)}_s^{3/2}}{\sqrt{\gamma}} \right)      e^{\frac{C}{\gamma}  (1+ \sup_{s\in[0,t]} \NORM{\omega(s)}_s)}  \mathring M    \, , \label{8ThswELzXU3X7Ebd1KdZ7v1rN3GiirRXGKWK099ovBM0FDJCvkopYNQ2a307} \end{align} and this inequality holds pointwise in time for $t < \mu_0/2\gamma$. The constant $C$ and the implicit constants in $\les$ are independent of $\gamma$.  \ghoahbkseibhseir Combining \eqref{8ThswELzXU3X7Ebd1KdZ7v1rN3GiirRXGKWK099ovBM0FDJCvkopYNQ2a305b}, \eqref{8ThswELzXU3X7Ebd1KdZ7v1rN3GiirRXGKWK099ovBM0FDJCvkopYNQ2a306b}, and \eqref{8ThswELzXU3X7Ebd1KdZ7v1rN3GiirRXGKWK099ovBM0FDJCvkopYNQ2a307},  and taking the supremum in time for $t < \frac{\mu_0}{2\gamma}$, we arrive at \begin{align} \sup_{t \in [0,\frac{\mu_0}{2\gamma}]} \NORM{\omega(t)}_t &\leq C (\tilde M + \overline M ) + \frac{C \sup_{t \in [0,\frac{\mu_0}{2\gamma}]} \NORM{\omega(t)}_t^2}{\sqrt{\gamma}} \notag\\ &\quad + C \mathring M \left(1+ \frac{\sup_{t \in [0,\frac{\mu_0}{2\gamma}]} \NORM{\omega(t)}_t^{3/2}}{\sqrt{\gamma}} \right)      e^{\frac{C \mu_0}{\gamma}  (1+ \sup_{t \in [0,\frac{\mu_0}{2\gamma}]} \NORM{\omega(t)}_t)}    \, , \notag \end{align} where $C\geq 1$ is a constant that depends only on $\mu_0$.  Using a standard barrier argument, one may show that if $\gamma$ is chosen sufficiently large, in terms of $\tilde M, \overline M, \mathring M, \mu_0$,  we obtain \begin{align} \sup_{t \in [0,\frac{\mu_0}{2\gamma}]} \NORM{\omega(t)}_t \leq 2 C( \tilde M + \overline M  + \mathring M ) \, , \notag \end{align} concluding the proof of the theorem. \end{proof} \ghoahbkseibhseir  \begin{remark} \label{R08} In order to justify the above a~priori estimates, for each $\delta \in (0,1]$, we apply them on the approximate system   \begin{align}   \omega_t + u^{\delta}\cdot\nabla\omega -\nu\Delta\omega   =\omega\cdot\nabla u^{\delta}    \,  ,   \label{8ThswELzXU3X7Ebd1KdZ7v1rN3GiirRXGKWK099ovBM0FDJCvkopYNQ2a03}   \end{align} where $u^{\delta}$ is a regularization of    the   velocity in the Biot-Savart law   \eqref{8ThswELzXU3X7Ebd1KdZ7v1rN3GiirRXGKWK099ovBM0FDJCvkopYNQ2a123}--\eqref{8ThswELzXU3X7Ebd1KdZ7v1rN3GiirRXGKWK099ovBM0FDJCvkopYNQ2a124}. The   boundary conditions \eqref{bdry:hor} and \eqref{bdry:ver} become   \begin{equation} \notag   \nu(\partial_{3}+\Lambda_h)\omega_h|_{z=0}   =\partial_{3}(-\Delta)^{-1}(-u^{\delta}\cdot\nabla\omega_h+\omega\cdot\nabla   u^{\delta}_h)|_{z=0}   \end{equation}  with $  \omega_3|_{z=0}=0$, and the initial condition  is  replaced  by an analytic approximation. Then we apply the a prior estimates on  the regularized solution to the modified   system~\eqref{8ThswELzXU3X7Ebd1KdZ7v1rN3GiirRXGKWK099ovBM0FDJCvkopYNQ2a03}  and pass those bounds to the limit $\delta\to 0$. \end{remark}  \ghoahbkseibhseir Deducing Theorem~\ref{T02} from Theorem~\ref{T01}  is the same as in the 2D case~\cite{KukVicWan19}, and we thus omit  the details. \ghoahbkseibhseir \ghoahbkseibhseir \ghoahbkseibhseir  

\end{document}